\renewcommand{\PrintDOI}[1]{\href{http://dx.doi.org/\detokenize{#1}}{doi: \detokenize{#1}}}
\setlist[enumerate,1]{label=\textup{(\arabic*)}}
\setlist[enumerate,2]{label=\textup{(\alph*)}}
\theoremstyle{plain}
\newtheorem{theorem}[subsection]{Theorem}
\newtheorem{lemma}[subsection]{Lemma}
\newtheorem{proposition}[subsection]{Proposition}
\theoremstyle{definition}
\newtheorem{definition}[subsection]{Definition}
\theoremstyle{remark}
\newtheorem{remark}[subsection]{Remark}
\newtheorem{example}[subsection]{Example}
\newcommand*{\defeq}{\mathrel{\vcentcolon=}}
\newcommand*{\congto}{\xrightarrow\sim}
\newcommand*{\alb}{\hspace{0pt}} % allow break in following word
\DeclarePairedDelimiter{\abs}{\lvert}{\rvert}% absolute value
\DeclarePairedDelimiter{\norm}{\lVert}{\rVert}% norm
\DeclarePairedDelimiterX{\setgiven}[2]{\{}{\}}{#1\,{:}\,\mathopen{}#2}% set given by
\newcommand*{\into}{\rightarrowtail}
\newcommand*{\prto}{\twoheadrightarrow}
\newcommand*{\Mult}{\mathcal M}%multiplier algebra
\newcommand*{\U}{\mathcal U}% unitaries
\newcommand{\C}{\mathbb C}
\newcommand{\N}{\mathbb N}
\newcommand{\Z}{\mathbb Z}
\newcommand{\R}{\mathbb R}
\newcommand{\Qut}{\mathbb H}% quaternions
\newcommand{\T}{\mathbb T}
\newcommand{\Sphere}{\mathbb S}
\newcommand{\hot}{\mathbin{\hat\otimes}}% graded tensor product
\newcommand*{\Cont}{\mathrm C}% continuous functions
\newcommand*{\Contb}{\mathrm{C_b}}% bounded continuous functions
\newcommand{\rinv}{\mathfrak{r}}% ``real'' involution
\newcommand*{\pt}{\mathrm{pt}}% one-point space
\newcommand*{\Star}{$^*$\nobreakdash-}
\newcommand*{\nb}{\nobreakdash}
\newcommand*{\Cst}{\mathrm C^*}% C*-algebra
\newcommand*{\ima}{\mathrm i}% imaginary unit
\newcommand{\Comp}{\mathbb K}
\newcommand{\Bound}{\mathbb B}
\newcommand{\Mat}{\mathbb M}
\newcommand{\Id}{\mathrm{id}}% identity map
\newcommand{\ev}{\mathrm{ev}}% evaluation homomorphism
\newcommand{\an}{\mathrm{an}}% analytic
\newcommand*{\conj}[1]{\overline{#1}}
\newcommand*{\cl}[1]{\overline{#1}}% closure
\newcommand{\FU}{\mathrm{FU}}% odd selfadjoint unitaries/homotopy
\newcommand{\GFU}{\mathrm{GFU}}% Grothendieck group of FU
\newcommand{\K}{\mathrm{K}}% K-theory
\newcommand{\DK}{\mathrm{DK}}% van Daele's K-theory
\newcommand{\KO}{\mathrm{KO}}% real K-theory
\newcommand{\KR}{\mathrm{KR}}% “real” K-theory
\newcommand{\KK}{\mathrm{KK}}% KK-theory
\newcommand{\KKR}{\mathrm{KKR}}% ``real'' KK-theory
\DeclareMathOperator{\Hom}{Hom}
\DeclareMathOperator{\Ker}{Ker}
\DeclareMathOperator{\Cliff}{Cl}% Clifford algebra
\DeclareMathOperator{\sign}{sign}% sign
\begin{document}
\title[Geometric construction of classes in van Daele's K-theory]{Geometric construction of\\ classes in van Daele's K-theory}
\author{Collin Mark Joseph}
\email{collinmark.joseph@stud.uni-goettingen.de}
\author{Ralf Meyer}
\email{rmeyer2@uni-goettingen.de}

\address{Mathematisches Institut\\
  Universit\"at G\"ottingen\\Bunsenstra\ss e 3--5\\
  37073 G\"ottingen\\Germany}

\keywords{topological insulator;
  van Daele \(\K\)\nb-theory;
  bivariant K-theory}

\subjclass{46L80; 19K35}% change

\begin{abstract}
  We describe explicit generators for the ``real'' K-theory of
  ``real'' spheres in van Daele's picture.  Pulling these generators
  back along suitable maps from tori to spheres produces a family of
  Hamiltonians used in the physics literature on topological
  insulators.  We compute their K-theory classes geometrically,
  based on wrong-way functoriality of K-theory and the geometric
  version of bivariant K-theory, which we extend to the ``real''
  case.
\end{abstract}
\maketitle

\section{Introduction}
\label{sec:intro}

Topological insulators are insulators that, nevertheless, conduct
electricity on their boundaries.  Even more, conducting states on
the boundary are forced to be present by topological obstructions.
This suggests that the boundary conducting states are quite robust
under disorder.  In the one-particle approximation, topological
insulators may be classified by the topological \(\K\)\nb-theory of
the observable \(\Cst\)\nb-algebra.  In translation-invariant
tight-binding models, the observable algebra is isomorphic to a
matrix algebra over the algebra of continuous functions on the
\(d\)-torus~\(\T^d\), where~\(d\) is the dimension of the material.
Many interesting phenomena arise when the Hamiltonian enjoys extra
symmetries that are anti-unitary or anticommute with it.
Altogether, there are ten different symmetry types, and these
correspond to the two complex and the eight real \(\K\)\nb-theory
groups.  More precisely, the torus appears through the Fourier
transform, and the relevant observable algebra is the group
\(\Cst\)\nb-algebra of~\(\Z^d\) with real coefficients.  Under
Fourier transform, this becomes the real \(\Cst\)\nb-algebra
\[
  \setgiven*{f\colon \T^d \to \C}{f(\conj{z}) = \conj{f(z)} \text{
      for all }z\in\T^d}.
\]
The conjugation maps used here is an involution and makes the torus
a ``real'' space.  This is the same as a space with an action of the
group~\(\Z/2\).  In the following, we denote ``real'' structures
as~\(\rinv\).  For the \(d\)\nb-torus, we get
\begin{equation}
  \label{eq:Torus_d_detail}
  \begin{gathered}
    \T^d = \setgiven{(x_1,\dotsc,x_d,y_1,\dotsc,y_d)\in\R^{2 d}}{x_j^2
      + y_j^2 = 1\text{ for }j=1,\dotsc,d},\\
    \rinv_{\T^d}(x_1,\dotsc,x_d,y_1,\dotsc,y_d) \defeq
    (x_1,\dotsc,x_d,-y_1,\dotsc,-y_d).
  \end{gathered}
\end{equation}
As a result, the relevant \(\K\)\nb-theory for the study of
topological phases is the ``real'' \(\K\)\nb-theory \(\KR^*(\T^d)\)
of \((\T^d,\rinv_{\T^d})\) as defined by
Atiyah~\cite{Atiyah:K_Reality}.  This appearance in physics has
renewed the interest in ``real'' \(\K\)\nb-theory.

This article generalises geometric bivariant \(\K\)\nb-theory as a tool for
\(\K\)\nb-theory computations to the ``real'' case and uses this to compute
the \(\K\)\nb-theory classes of certain Hamiltonians studied in the complex
case already in~\cite{Prodan-Schulz-Baldes:Bulk_boundary}.  In
addition, we describe explicit generators for the ``real'' \(\K\)\nb-theory
of spheres, extending a formula by Karoubi
in~\cite{Karoubi:Lectures_K} for the complex \(\K\)\nb-theory of
even-dimensional spheres.

There have always been several different ways to describe the
\(\K\)\nb-theory of a space or a \(\Cst\)\nb-algebra.  As noted by
Kellendonk~\cite{Kellendonk:Cstar_phases}, the \(\K\)\nb-theory picture that
is closest to the classification of topological insulators is van
Daele's picture.  His definition applies to a real or complex
\(\Cst\)\nb-algebra~\(A\) with a \(\Z/2\)-grading.  It is based on
odd, selfadjoint unitaries in matrix algebras over~\(A\).  A
\(\Z/2\)-grading may be interpreted physically as a chiral symmetry,
and an odd selfadjoint unitary is just the spectral flattening of a
Hamiltonian with a spectral gap at zero that respects the given
chiral symmetry.  Systems without chiral symmetry may also be
treated by doubling the number of degrees of freedom to introduce an
auxiliary chiral symmetry that anticommutes with the Hamiltonian.

The starting point of this article was the discussion by Prodan and
Schulz-Baldes~\cite{Prodan-Schulz-Baldes:Bulk_boundary} of certain
examples of Hamiltonians~\(H_m\) in any dimension~\(d\), namely,
\[
H_m \defeq \frac{1}{2\ima} \sum_{j=1}^d (S_j - S_j^*) \otimes \gamma_j
+ \Bigl(m + \frac{1}{2} \sum_{j=1}^d (S_j + S_j^*)\Bigr)
\otimes \gamma_0
\in\Cst(\Z^d)\otimes\Cliff_{1,d}
\]
with Clifford generators \(\gamma_0,\dotsc,\gamma_d\) and
translations~\(S_j\) in coordinate directions for \(j=1,\dotsc,d\),
and a mass parameter~\(m\) (see
\cite{Prodan-Schulz-Baldes:Bulk_boundary}*{§2.2.4 and §2.3.3}).  The
selfadjoint element~\(H_m\) has a spectral gap at zero if and only
if \(m\notin \{-d,-d+2,\dotsc,d-2,d\}\).  Then it defines an
insulator.  The top-degree Chern character of its \(\K\)\nb-theory
class and its jumps at the values in \(\{-d,-d+2,\dotsc,d-2,d\}\)
are computed in the physics literature (see
\cite{Prodan-Schulz-Baldes:Bulk_boundary}*{Equation~(2.26)} and also
\cites{Golterman-Jansen-Kaplan:Currents,
  Qi-Hughes-Zhang:TFT_time-reversal}).  Here we explain a possible
mathematical origin of~\(H_m\): it is the pullback of a generator of
the reduced \(\KR\)-theory of a sphere along a map
\(\varphi_m\colon \T^d \to \Sphere^{1,d}\).  Then we proceed to
compute the class of~\(H_m\) in ``real'' \(\K\)\nb-theory for
all~\(m\) and all dimensions~\(d\).

The first step for this is to describe explicit generators for the
\(\KR\)\nb-theory of ``real'' spheres in van Daele's picture.
Let~\(\R^{a,b}\) denote \(\R^a\times \R^b\) with the involution
\(\rinv(x,y) \defeq (x,-y)\) for \(x\in\R^a\), \(y\in\R^b\).  Let
\(\Sphere^{a,b} \subseteq \R^{a,b}\) be the unit sphere with the
restricted real involution.  Let \(\Cliff_{a,b}\) denote the
Clifford algebra with \(a+b\) anticommuting, odd, selfadjoint,
unitary generators \(\gamma_1,\dotsc,\gamma_{a+b}\), and such that
\(\gamma_1,\dotsc,\gamma_a\) and
\(\ima \gamma_{a+1},\dotsc, \ima \gamma_{a+b}\) are real.  Then
\begin{equation}
  \label{eq:beta}
  \beta_{a,b}
  \defeq \sum_{j=1}^{a+b} x_j \gamma_j \Bigr|_{\Sphere^{a,b}}
\end{equation}
is an odd, selfadjoint, unitary, and real element of the
\(\Cst\)\nb-algebra \(\Cont(\Sphere^{a,b}) \otimes \Cliff_{a,b}\).
Therefore, it defines a class in its van Daele \(\K\)\nb-theory, which is
isomorphic to the ``real'' \(\K\)\nb-theory group
\(\KR^{a-b-1}(\Sphere^{a,b})\).
We check that its image in the reduced ``real'' \(\K\)\nb-theory is a
generator in the sense that the exterior product map with it defines
an isomorphism from the ``real'' \(\K\)\nb-theory of a point to the reduced
``real'' \(\K\)\nb-theory of~\(\Sphere^{a,b}\).  Our proof
that~\(\beta_{a,b}\) generates the ``real'' \(\K\)\nb-theory is based
on the proof of Bott periodicity by
Kasparov~\cite{Kasparov:Operator_K} and Roe's proof of the
isomorphism between \(\KK_0(\R,A)\) and van Daele's \(\K\)\nb-theory
of~\(A\) for any \(\Z/2\)-graded real \(\Cst\)\nb-algebra~\(A\).

The spectral flattening of the Hamiltonian~\(H_m\) for
\(m\in\R\setminus \{-d,-d+2,\dotsc,d-2,d\}\) is an odd,
selfadjoint, real unitary on the \(d\)\nb-torus~\(\T^d\) with the
involution by complex conjugation in each circle factor.  This may
be written down as the pull-back of~\(\beta_{1,d}\) along the
real map
\begin{equation}
  \label{eq:varphi_m}
  \begin{aligned}
    \varphi_m\colon \T^d &\to \Sphere^{1,d},\\
    (x_1,\dotsc,x_d,y_1,\dotsc,y_d) &\mapsto
    \frac{(x_1+\dotsb+x_d+m,y_1,y_2,\dotsc,y_d)}
    {\norm{(x_1+\dotsb+x_d+m,y_1,y_2,\dotsc,y_d)}}.
  \end{aligned}
\end{equation}
Our task is to compute this pull back in the \(\KR\)\nb-theory of the
torus.

There is another way to describe the generator of
\(\KR^{a-b-1}(\Sphere^{a,b})\) for \(a>0\).  Then the two ``poles''
\begin{equation}
  \label{eq:poles}
  N \defeq (1,0,\ldots, 0),\qquad
  S \defeq (-1,0,\ldots,0),
\end{equation}
are fixed by the ``real'' involution on~\(\Sphere^{a,b}\).  The
stereographic projection identifies
\(\Sphere^{a,b} \setminus \{N\}\) with~\(\R^{a-1,b}\) as a ``real''
manifold.  Thus Bott periodicity identifies the reduced
\(\KR\)\nb-theory of~\(\Sphere^{a,b}\) with
\(\KR^{a-b-1}(\R^{a-1,b}) \cong \KR^0(\pt) \cong \Z\).  The
resulting map \(\KR^0(\pt) \to \KR^{a-b-1}(\Sphere^{a,b})\) is an
example of the wrong-way functoriality of \(\K\)\nb-theory.  Namely,
it is the shriek map~\(i!\) associated to the inclusion map~\(i\)
of~\(\{S\}\).

Shriek maps and pull-back maps such as \(i!\) and~\(\varphi_m^*\)
are among the building blocks of geometric bivariant
\(\K\)\nb-theory.  This theory also allows to compute the composites
of such maps geometrically.  In our case, this says that
\(\varphi_m^* \circ i!\) is the sum of the shriek maps for all
points in \(\varphi_m^{-1}(S)\), equipped with appropriate
orientations.  Our main result, Theorem~\ref{the:compute}, computes
the image of this in the usual direct sum decomposition of
\(\KR^*(\T^d)\).

To make this computation valid in the ``real'' case, we show that
the geometric bivariant \(\K\)\nb-theory as developed in
\cites{Emerson-Meyer:Normal_maps, Emerson-Meyer:Correspondences}
still works for \(\KR\)\nb-theory.  These articles define geometric
bivariant \(\K\)\nb-theory in a slightly different way than
suggested originally by
Connes--Skandalis~\cite{Connes-Skandalis:Longitudinal}, in order to
extend it more easily to the equivariant case.  A ``real''
involution on a space is the same as a \(\Z/2\)\nb-action.  The
``real'' \(\K\)\nb-theory is not the same as \(\Z/2\)\nb-equivariant
\(\K\)\nb-theory.  The difference is that \(\Z/2\)\nb-equivariant
\(\K\)\nb-theory looks at \(\Z/2\)-actions on vector bundles that
are fibrewise linear, whereas ``real'' \(\K\)\nb-theory looks at
\(\Z/2\)-actions on vector bundles that are fibrewise
conjugate-linear.  This change in the setup does not affect the
properties of equivariant \(\K\)\nb-theory that are needed to
develop bivariant equivariant \(\K\)\nb-theory.  We only comment on
this rather briefly in this article.  More details may be found in
the Master's Thesis~\cite{Joseph:Master}.

\subsection{Some basic conventions}

Throughout the article, we let \(\R^{a,b}\) for \(a,b\in\N\) be
\(\R^a\times \R^b\) with the involution \(\rinv(x,y) \defeq (x,-y)\)
for \(x\in\R^a\), \(y\in\R^b\), and we let \(\Sphere^{a,b}\) be the
unit sphere in~\(\R^{a,b}\); so the dimension of~\(\Sphere^{a,b}\)
is \(a+b-1\).  Our \(\R^{a,b}\) and~\(\Sphere^{a,b}\) are denoted
\(\R^{b,a}\) and~\(\Sphere^{b,a}\) by
Atiyah~\cite{Atiyah:K_Reality}; our notation is that of
Kasparov~\cite{Kasparov:Operator_K}.

A ``real'' structure on a \(\Cst\)\nb-algebra~\(A\) is a
conjugate-linear, involutive \Star{}\alb{}homomorphism
\(\rinv\colon A\to A\).  Then
\[
  A_\R \defeq \setgiven{a\in A}{\rinv(a) = a}
\]
is a real \(\Cst\)\nb-algebra such that \(A\cong A_\R \otimes \C\)
with the involution
\(\rinv(a\otimes \lambda) \defeq a \otimes \conj{\lambda}\).  Thus
``real'' \(\Cst\)\nb-algebras are equivalent to real
\(\Cst\)\nb-algebras.  Any commutative ``real'' \(\Cst\)\nb-algebra
is isomorphic to \(\Cont_0(X)\) with the real involution
\(\rinv(f)(x) \defeq \conj{f(\rinv(x))}\) for all \(x\in X\) for a
``real'' locally compact space \((X,\rinv)\).

Let~\(\Cliff_{a,b}\) denote the complex Clifford algebra with
\(a+b\) anticommuting, odd, selfadjoint, unitary generators
\(\gamma_1,\dotsc,\gamma_{a+b}\), and such that
\(\gamma_1,\dotsc,\gamma_a\) and
\(\ima \gamma_{a+1},\dotsc, \ima \gamma_{a+b}\) are real.  This is a
\(\Z/2\)-graded ``real'' \(\Cst\)\nb-algebra.  Let~\(\hot\) denote
the graded-commutative tensor product for \(\Z/2\)-graded (``real'')
\(\Cst\)\nb-algebras.

For \(\R^{1,d}\) and its subspace~\(\Sphere^{1,d}\) and for the
corresponding Clifford algebra~\(\Cliff_{1,d}\), it is convenient to
start numbering at~\(0\).  That is, the coordinates in~\(\R^{1,d}\)
are \(x_0,\dotsc,x_d\) and~\(\Cliff_{1,d}\) is has the
anticommuting, odd, selfadjoint generators
\(\gamma_0,\dotsc,\gamma_d\), such that
\(\gamma_0, \ima \gamma_1,\dotsc, \ima \gamma_d\) are real.

\section{Explicit \texorpdfstring{$\K$\nb-}{K-}theory of spheres}
\label{sec:4}

We are going to write down explicit generators for the reduced
\(\K\)\nb-theory groups of spheres.  This depends, of course, on the
definition of \(\K\)\nb-theory that we are using.  In terms of
vector bundles given by gluing, Atiyah--Bott--Shapiro describe
in~\cite{Atiyah-Bott-Shapiro:Clifford} how to go from irreducible
Clifford modules to these generators.  They do treat
\(\KO\)\nb-theory, but not the ``real'' case, which is only invented
in~\cite{Atiyah:K_Reality}.  For the description of~\(\K_0\) through
projections, Karoubi~\cite{Karoubi:Lectures_K} has written down
explicit generators for the complex \(\K\)\nb-theory of
even-dimensional spheres.  Kasparov~\cite{Kasparov:Operator_K} has
written down explicit generators for his bivariant \(\KK\)-theory
\(\KKR_0(\C,\Cont_0(\R^{a,b}) \otimes \Cliff_{a,b})\); here we write
\(\KKR\) to highlight that this means the ``real'' version of the
theory.

We want our explicit generators in van Daele's \(\K\)\nb-theory for
\(\Z/2\)-graded ``real'' \(\Cst\)\nb-algebras; this version of
\(\K\)\nb-theory is ideal for the applications to topological
insulators that we have in mind
(see~\cite{Kellendonk:Cstar_phases}), and it is also helpful to
treat ``real'' spheres of all dimensions simultaneously.  Actually,
our generators are just those of Kasparov, translated along a
canonical isomorphism between Kasparov's KK-theory and van Daele's
\(\K\)\nb-theory.  This isomorphism is an auxiliary result due to
Roe~\cite{Roe:Paschke_real}.  Roe's isomorphism is discussed in
greater detail in~\cite{Bourne-Kellendonk-Rennie:Cayley_K-theory},
which also relates several variants of van Daele's \(\K\)\nb-theory.

\subsection{Van Daele \texorpdfstring{$\K$\nb-}{K-}theory for graded C*-algebras and Roe's
isomorphism}

The following definitions make sense both for real and complex
\(\Cst\)\nb-algebras.  We write them down in the ``real'' case
because this is what we are going to use later.  The effect of the
condition \(\rinv(a)=a\) below is to replace a ``real''
\(\Cst\)\nb-algebra~\(A\) by its real subalgebra \(A_\R \defeq
\setgiven{a\in A}{\rinv(a)=a}\).

\begin{definition}
  Let~\(A\) be a unital, \(\Z/2\)-graded ``real'' \(\Cst\)\nb-algebra
  and \(n \in\N_{\ge1}\).  Let
  \[
    \mathcal{FU}_n(A) \defeq \setgiven{a \in \Mat_n A}{a=a^*,\
      \rinv(a) = a,\ a^2=1,\ a \text{ odd}},
  \]
  equipped with the norm topology of~\(\Mat_n A\).  Two elements in
  \(\mathcal{FU}_n(A)\) in the same path components are called
  \emph{homotopic}.  We define the maps
  \[
    {\oplus}\colon \mathcal{FU}_n(A) \times \mathcal{FU}_m(A) \to
    \mathcal{FU}_{n+m}(A),\qquad
    (a,b) \mapsto
    \begin{pmatrix}
      a&0\\0&b
    \end{pmatrix}.
  \]
  Let \(\FU_n(A) \defeq \pi_0(\mathcal{FU}_n(A))\).  We abbreviate
  \(\mathcal{FU}(A) \defeq \mathcal{FU}_1(A)\) and
  \(\FU(A) \defeq \FU_1(A)\).  We call~\(A\) \emph{balanced} if
  \(\mathcal{FU}(A)\neq \emptyset\).
\end{definition}

The condition \(a=a^*\) may be dropped (see
\cite{VanDaele:K_graded_I}*{Proposition 2.5}).  For a unitary, it
means that \(a^2=1\), forcing the spectrum to be contained
in~\(\{\pm1\}\).  Then \((a+1)/2\) is a projection.  It is useless,
however, because it is not even.

The operation~\(\oplus\) is associative and commutative up to
homotopy (see \cite{VanDaele:K_graded_I}*{Proposition 2.7}).  So
\(\bigsqcup_{n=1}^\infty \FU_n(A)\) becomes an Abelian semigroup.
Let \(\GFU(A)\) be its Grothendieck group.

\begin{definition}
  Let~\(A\) be a balanced, unital, \(\Z/2\)-graded ``real''
  \(\Cst\)\nb-algebra.  The \emph{van Daele \(\K\)\nb-theory}
  \(\DK(A)\) of~\(A\) is defined as the kernel of the homomorphism
  \(d\colon \GFU(A) \to \Z\) defined by \(d|_{\FU_n(A)} = n\) for
  all \(n\in\N_{\ge1}\).
\end{definition}

Van Daele's original definition uses an element
\(e\in\mathcal{FU}(A)\) with \(e\sim (-e)\) to define stabilisation
maps
\[
  \FU_n(A) \to \FU_{n+1}(A),\qquad
  [x] \mapsto [x \oplus e].
\]
The colimit of the resulting inductive system becomes an Abelian
group under~\(\bigoplus\).  This group is isomorphic to~\(\DK(A)\)
as defined above (see \cites{Roe:Paschke_real,
  Bourne-Kellendonk-Rennie:Cayley_K-theory}).

The definition above is generalised beyond the balanced, unital case
as follows.  First let~\(A\) be a unital, \(\Z/2\)-graded ``real''
\(\Cst\)\nb-algebra.  Then \(A \hot \Cliff_{1,1}\) is balanced
because \(1\hot \gamma_1 \in \FU(A\hot \Cliff_{1,1})\).  If~\(A\) is
already balanced, then there is a natural stabilisation isomorphism
\(\DK(A) \cong \DK(A \hot \Cliff_{1,1})\).  This justifies defining
\(\DK(A) \defeq \DK(A \hot \Cliff_{1,1})\) in general.  For a
\(\Cst\)\nb-algebra~\(A\) without unit, let~\(A^+\) be its
unitisation, equipped with the canonical augmentation homomorphism
\(A^+ \to \C\).  Then
\[
  \DK(A) \defeq \Ker\bigl( \DK(A^+) \to \DK(\C)\bigr).
\]
This reproduces the previous definition if~\(A\) is unital because
then \(A^+ \cong A \oplus \C\) and \(\DK\) is additive.  Since there
is an isomorphism \(\DK(A) \cong \KKR_0(\Cliff_{1,0},A)\), the
functor~\(\DK\) is stable with respect to matrix algebras, Morita
invariant, homotopy invariant, and exact for all extensions of
\(\Z/2\)-graded \(\Cst\)\nb-algebras.  See
also~\cite{Bourne-Kellendonk-Rennie:Cayley_K-theory} for direct
proofs of these properties of van Daele's \(\K\)\nb-theory using the
definition above.

\subsection{Roe's isomorphism}

The following proposition is due to Roe~\cite{Roe:Paschke_real}.
The equivalent isomorphism \(\DK(A) \cong\KKR_0(\Cliff_{1,0},A)\) is
proven in~\cite{Bourne-Kellendonk-Rennie:Cayley_K-theory}.

\begin{proposition}[\cite{Roe:Paschke_real}]
  \label{pro:Daele_vs_KK}
  Let~\(A\) be a unital, \(\Z/2\)-graded ``real'' \(\Cst\)\nb-algebra.
  Then
  \(\DK(A \hot \Cliff_{r+1,s}) \cong \KKR_0(\C, A \hot
  \Cliff_{r,s})\).
\end{proposition}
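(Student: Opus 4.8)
The plan is to build an explicit isomorphism between $\DK(A \hot \Cliff_{r+1,s})$ and $\KKR_0(\C, A \hot \Cliff_{r,s})$ by sending a formal difference of flattened unitaries to a Kasparov module, following Roe~\cite{Roe:Paschke_real}. It suffices to treat the case $r=s=0$, so that the claim reads $\DK(A \hot \Cliff_{1,0}) \cong \KKR_0(\C, A)$; the general case follows by replacing $A$ with $A \hot \Cliff_{r,s}$, since tensoring with a fixed graded ``real'' $\Cst$\nb-algebra is compatible with all the constructions involved and $\Cliff_{r+1,s} \cong \Cliff_{1,0} \hot \Cliff_{r,s}$. Note that $A \hot \Cliff_{1,0}$ is automatically balanced: $1 \hot \gamma_1$ is an odd, selfadjoint, real unitary, so $\DK(A\hot\Cliff_{1,0})$ is defined via the first definition, as the kernel of $d\colon \GFU(A\hot\Cliff_{1,0}) \to \Z$.

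First I would describe the map from left to right. An element $a \in \mathcal{FU}_n(A \hot \Cliff_{1,0})$ is an odd, selfadjoint, real unitary in $\Mat_n(A) \hot \Cliff_{1,0}$. Writing the single Clifford generator as $\gamma_1$ (odd, selfadjoint, real, with $\gamma_1^2 = 1$), I would use the standard device: since $a$ is odd and $\gamma_1$ is odd, the product $a \cdot (1\hot\gamma_1)$ is \emph{even}; call it $F$. One checks $F$ is a real unitary in $\Mat_n(A)$ (the even part, which is just $\Mat_n(A)$ since the $\Cliff_{1,0}$-grading has collapsed), and the selfadjointness of $a$ together with $a\gamma_1 = -\gamma_1 a$ gives $F^* = -\gamma_1 a \gamma_1 \cdot \gamma_1 = \dots$, so in fact $F$ is unitary but generally $F^* \ne F$. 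The Kasparov cycle attached to $a$ is then $(\Mat_n(A)$ as a right Hilbert $A$-module with the obvious grading, the left action of $\C$, and the odd selfadjoint operator built from $a$ itself$)$; more precisely one takes the $\Z/2$-graded Hilbert $A$-module $\Mat_n(\C)\hot\Cliff_{1,0} \otimes A$ — equivalently, $\mathcal{FU}_n$ of $A\hot\Cliff_{1,0}$ gives exactly a selfadjoint odd unitary endomorphism of a graded Hilbert module, which is precisely a (degenerate, hence $\KK$-trivial after stabilisation in the naive sense, but not trivial relative to the constraint) Kasparov cycle for $(\C,A)$. The subtlety that forces us to pass to Grothendieck groups and to the kernel of $d$ is exactly the subtlety that forces $\KK$-cycles to be taken modulo degenerate ones and operator-homotopy; I would match homotopy of flattened unitaries with operator homotopy of cycles, and the $\oplus$-operation with direct sum of cycles, so that $d$ corresponds to the rank of the underlying module and the kernel of $d$ to the reduced/stable picture where $\KK_0$ genuinely lives.

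For the inverse map, given a Kasparov cycle $(E, \phi, F)$ for $(\C, A)$ with $E$ a countably generated graded Hilbert $A$-module, $\phi\colon\C\to\Bound(E)$ and $F$ odd, selfadjoint with $F^2 - 1 \in \Comp(E)$, I would first use Kasparov's stabilisation theorem and a standard perturbation to replace $F$ by an honest selfadjoint odd unitary on a module of the form $\Mat_n(A) \hot \Cliff_{1,0}$ (absorbing a degenerate cycle to make the rank and grading symmetric), and then read off $a \defeq F \cdot (1\hot\gamma_1) \in \mathcal{FU}_n(A\hot\Cliff_{1,0})$; this is odd, selfadjoint, real, with $a^2 = 1$. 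One then checks the two constructions are mutually inverse up to the equivalence relations and additive, hence descend to the Grothendieck groups and restrict to the asserted isomorphism on the kernel of $d$.

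The main obstacle will be the bookkeeping of gradings and ``real'' structures through the identification $A \hot \Cliff_{1,0}$-unitaries $\leftrightarrow$ odd operators on graded Hilbert $A$-modules — in particular, verifying that selfadjointness and reality of the van Daele element correspond exactly to selfadjointness and reality (compatibility with the conjugate-linear involution) of the Kasparov operator, with the correct sign conventions for $\Cliff_{1,0}$ versus $\Cliff_{0,1}$, and that the stabilisation/degenerate-cycle moves on the $\KK$ side match the $\oplus e$ stabilisation on the van Daele side. Everything else — homotopy invariance, the $\oplus \leftrightarrow \oplus$ match, passing to Grothendieck groups — is routine once the dictionary is set up. I would lean on Roe~\cite{Roe:Paschke_real} and \cite{Bourne-Kellendonk-Rennie:Cayley_K-theory} for the detailed verifications and only indicate the construction here.
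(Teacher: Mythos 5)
Your plan to biject flattened unitaries in $\mathcal{FU}_n(A \hot \Cliff_{1,0})$ directly with Kasparov cycles for $(\C,A)$ has a gap at its core. An element $a \in \mathcal{FU}_n(A \hot \Cliff_{1,0})$ satisfies $a^2 = 1$ exactly, so if you take it (or any operator built from it without leaving the finitely generated module) as the Fredholm operator $F$ in a cycle $(E,1,F)$ for $\KKR_0(\C,A)$, with $\C$ acting by scalars, then the cycle is degenerate: $F=F^*$, $F^2=1$, and $F$ commutes with the scalar action, which is the full list of degeneracy conditions. Degenerate cycles represent zero, formal differences of degenerate cycles still represent zero, and neither passing to the Grothendieck group nor restricting to $\ker d$ changes this. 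You notice the degeneracy yourself, but the phrase ``not trivial relative to the constraint'' has no mathematical content here; the map you describe is the zero map. The sketched inverse has a matching problem: a cycle whose index is nonzero cannot be perturbed to one with an honest unitary $F$ on a finitely generated module, and absorbing genuinely degenerate cycles (which have index zero) does not remove the obstruction.

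The paper, following Roe, avoids exactly this pitfall in two steps. Lemma~\ref{lem:Daele_vs_Kasparov} first constructs the isomorphism $\DK\bigl(\mathcal{Q}(\Comp\otimes A)\hot\Cliff_{r,s}\bigr) \cong \KKR_0(\C, A\hot\Cliff_{r,s})$: the odd selfadjoint real unitaries are taken in the Calkin-type algebra $\mathcal{Q}(\Comp\otimes A)$, so their lifts $F\in\Bound(\mathcal{H}_A\hot\Cliff_{r,s})$ only satisfy $F^2-1\in\Comp(\mathcal{H}_A)\hot\Cliff_{r,s}$ and the resulting cycles are genuinely nondegenerate. Then the boundary map in the $\DK$ long exact sequence of $\Comp\otimes A\hot\Cliff_{r,s} \into \Mult(\Comp\otimes A)\hot\Cliff_{r,s} \prto \mathcal{Q}(\Comp\otimes A)\hot\Cliff_{r,s}$ is an isomorphism onto $\DK(A\hot\Cliff_{r+1,s})$ because $\DK$ vanishes on stable multiplier algebras; this supplies the Clifford degree shift $r\mapsto r+1$ that your sketch never produces. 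A route closer in spirit to your plan, used in \cite{Bourne-Kellendonk-Rennie:Cayley_K-theory}, is to let the flattened unitary give a \emph{left $\Cliff_{1,0}$-action} on a cycle with $F=0$, proving $\DK(B)\cong\KKR_0(\Cliff_{1,0},B)$, and then cancel Clifford algebras; either way, the unitary from $\mathcal{FU}_n$ cannot itself be the Fredholm operator of a $(\C,A)$-cycle.
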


This justifies defining the ``real'' \(\K\)\nb-theory \(\KR(A)\) and
its graded version for a \(\Z/2\)-graded ``real''
\(\Cst\)\nb-algebra~\(A\) as
\[
  \KR(A) \defeq \DK(A \hot \Cliff_{1,0}),\qquad
  \KR_n(A) \defeq \DK(A \hot \Cliff_{1,n}),
\]
for \(n\in\N\).  Actually, this only depends on \(n\bmod 8\), so
that we may also take \(n\in\Z/8\).  If~\(A\) is trivially graded,
then \(\KR(A)\) is naturally isomorphic to the ordinary~\(\K_0\) of
the real Banach algebra~\(A_\R\).  This is sometimes denoted
\(\KO_0(A_\R)\) to highlight that~\(A_\R\) is only a real
\(\Cst\)\nb-algebra.

For a ``real'' locally compact space~\(X\), we define its ``real''
\(\K\)\nb-theory \(\KR(X)\) as \(\KR(\Cont_0(X))\) with the induced
``real'' structure.  This is equivalent to the definition by
Atiyah~\cite{Atiyah:K_Reality}.  For \(n\in\Z/8\), we also let
\[
  \KR^n(X) \defeq \KR_{-n}(\Cont_0(X)).
\]

To construct explicit generators for the \(\KR\)\nb-theory of
spheres, we shall need an elementary auxiliary result used in the
proof in~\cite{Roe:Paschke_real}.  Let~\(\mathcal{H}_A\) be the
standard graded ``real'' Hilbert \(A\)\nb-module
\(\ell^2(\N,\C) \otimes A\); the ``real'' involution is the tensor
product of complex conjugation on \(\ell^2(\N,\C)\) and the given
``real'' involution on~\(A\), and the \(\Z/2\)\nb-grading is the
tensor products of the \(\Z/2\)\nb-grading on \(\ell^2(\N,\C)\)
induced by the parity operator and the given one on~\(A\).  A cycle
for \(\KKR_0(\C, A \hot \Cliff_{r,s})\) is homotopic to one of the
form \((\mathcal{H}_A \hot \Cliff_{r,s}, 1, F)\), where~\(1\) denotes
the left action of~\(\C\) by scalar multiplication, and
\(F\in\Bound(\mathcal{H}_A)\hot \Cliff_{r,s}\) is real, odd and
selfadjoint and satisfies
\(F^2 -1 \in \Comp(\mathcal{H}_A) \hot \Cliff_{r,s}\).  Let
\(\Comp= \Comp(\ell^2(\N,\C))\), identify \(\Comp(\mathcal{H}_A)\)
with \(\Comp \otimes A \cong \Comp \hot A\), and let
\(\Mult(\Comp \otimes A)\) denote the stable multiplier algebra
of~\(A\).  Then \(\Bound(\mathcal{H}_A \hot \Cliff_{r,s})\) is
isomorphic to \(\Mult(\Comp \otimes A) \hot \Cliff_{r,s}\).  Let
\(\mathcal{Q}(\Comp \otimes A) \defeq \Mult(\Comp \otimes A)/\Comp
\otimes A\).  Then~\(F\) is mapped to a ``real'', odd, selfadjoint
unitary in \(\mathcal{Q}(\Comp \otimes A)\hot \Cliff_{r,s}\).
Conversely, any ``real'', odd, selfadjoint unitary in
\(\mathcal{Q}(\Comp \otimes A)\hot \Cliff_{r,s}\)
lifts to an operator
\(F \in \Bound(\mathcal{H}_A \hot \Cliff_{r,s})\) such that
\((\mathcal{H}_A \hot \Cliff_{r,s}, 1, F)\) is a cycle for
\(\KKR_0(\C,A \hot \Cliff_{r,s})\).  Thus we get an obvious
surjective map from
\(\FU(\mathcal{Q}(\Comp \otimes A)\hot \Cliff_{r,s})\) onto
\(\KKR_0(\C,A \hot \Cliff_{r,s})\).  Since~\(\mathcal{H}_A\)
has infinite multiplicity,
\(\Bound(\mathcal{H}_A \hot \Cliff_{r,s}) \cong
\Mat_n\bigl(\Bound(\mathcal{H}_A \hot \Cliff_{r,s})\bigr)\), and the
same holds for the quotient by the compact operators.  Thus the
construction above also defines a map
\[
  \bigsqcup_{n=1}^\infty
  \FU_n(\mathcal{Q}(\Comp \otimes A)\hot \Cliff_{r,s})
  \to \KKR_0(\C,A \hot \Cliff_{r,s}).
\]

\begin{lemma}
  \label{lem:Daele_vs_Kasparov}
  The map above induces a group isomorphism
  \(\DK(\mathcal{Q}(\Comp \otimes A)\hot \Cliff_{r,s}) \congto
  \KKR_0(\C,A \hot \Cliff_{r,s})\).
\end{lemma}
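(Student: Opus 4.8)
Write $B\defeq\mathcal{Q}(\Comp\otimes A)\hot\Cliff_{r,s}$, and let $\Psi\colon\bigsqcup_{n\ge1}\FU_n(B)\to\KKR_0(\C,A\hot\Cliff_{r,s})$ be the map constructed just before the lemma. Since $B$ is unital and (by the trivial element produced below) balanced, $\DK(B)$ is the kernel of $d\colon\GFU(B)\to\Z$ in the original sense, so the plan is: show $\Psi$ is a homomorphism of Abelian semigroups, deduce an induced homomorphism $\bar\Psi\colon\GFU(B)\to\KKR_0(\C,A\hot\Cliff_{r,s})$, and prove that its restriction to $\DK(B)$ is bijective. For the semigroup structure I would check three things. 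If $F,F'$ are two lifts of the same $a\in\mathcal{FU}_n(B)$ to adjointable operators on the relevant standard module, then $F-F'$ is compact, so $t\mapsto(1-t)F+tF'$ is an operator homotopy of Kasparov cycles and $\Psi$ is independent of the lift. Under the identification $\mathcal{H}_A^{\oplus2}\cong\mathcal{H}_A$ coming from the infinite multiplicity of~$\mathcal{H}_A$, the cycle attached to $a\oplus b$ is the direct sum of the cycles of $a$ and of $b$, which represents the sum of their classes. Finally, if $a_0,a_1\in\mathcal{FU}_n(B)$ are homotopic, then, lifting the norm-continuous path through the quotient map $\Mult(\Comp\otimes A)\to\mathcal{Q}(\Comp\otimes A)$ (tensored with $\Cont[0,1]$, which stays surjective) and symmetrising pointwise, one obtains a norm-continuous path of cycle operators, i.e.\ an operator homotopy, so $\Psi[a_0]=\Psi[a_1]$. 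Hence $\bar\Psi$ exists.

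Surjectivity of $\bar\Psi|_{\DK(B)}$ is then straightforward: $\Psi$ is already onto at the level of $\FU(B)$, as noted before the lemma. Using that the even and odd parts of~$\mathcal{H}_A$ are isomorphic standard modules, pick a degree-one, real, selfadjoint unitary operator~$F_0$ on $\mathcal{H}_A\hot\Cliff_{r,s}$ with $F_0^2=1$ (for instance $1\hot\gamma_1$ when $r\ge1$, or in general an operator exchanging the even and odd parts of~$\mathcal{H}_A$ by a real unitary). Then $(\mathcal{H}_A\hot\Cliff_{r,s},1,F_0)$ is degenerate, so its class $e\in\FU(B)$ satisfies $\bar\Psi(e)=0$; and for any $x=\Psi(a)$ with $a\in\mathcal{FU}(B)$, the element $[a]-[e]$ lies in $\DK(B)$ and maps to~$x$.

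The main point is injectivity. Suppose $a,b\in\mathcal{FU}_n(B)$ with $\bar\Psi[a]=\bar\Psi[b]$, so that the cycles $(\mathcal{H}_A\hot\Cliff_{r,s},1,F_a)$ and $(\mathcal{H}_A\hot\Cliff_{r,s},1,F_b)$ attached to lifts of $a$ and~$b$ are homotopic in $\KKR_0(\C,A\hot\Cliff_{r,s})$. I would use the standard description of Kasparov theory on a fixed, sufficiently large module — valid in the $\Z/2$-graded ``real'' case (cf.\ \cite{Kasparov:Operator_K}) — by which homotopy of cycles is generated by operator homotopy, unitary equivalence via a degree-zero real unitary of the module, and addition of degenerate cycles. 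It then suffices to see that each move corresponds to a move inside $\bigsqcup_m\mathcal{FU}_m(B)$ leaving the class in $\GFU(B)$ unchanged: an operator homotopy is a norm-continuous path $(F_t)$ with $F_t^2-1$ compact, whose image in $B$ is a norm-continuous path in $\mathcal{FU}_n(B)$; a unitary equivalence by an adjointable unitary~$U$ descends to conjugation of the image of~$F$ by the image of~$U$, and since~$U$ is a genuine unitary it is joined to the identity through degree-zero real unitaries by the ``real'' graded analogue of Kuiper's theorem, so this conjugation is a homotopy in $\mathcal{FU}_n(B)$; and addition of a degenerate cycle corresponds to $\oplus$ with a trivial element as above. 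Chaining these moves shows that, after adding trivial elements, $a$ and~$b$ become homotopic in some $\mathcal{FU}_m(B)$, hence $[a]=[b]$ in $\GFU(B)$. This gives injectivity of $\bar\Psi|_{\DK(B)}$, and together with the previous steps the asserted isomorphism.

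The two ingredients I expect to require real care — and which are essentially Roe's contribution in \cite{Roe:Paschke_real} — are the operator-homotopy presentation of $\KKR_0$ on a fixed large module and the ``real'' graded version of Kuiper's theorem for the unitary group of $\Bound(\mathcal{H}_A\hot\Cliff_{r,s})\cong\Mult(\Comp\otimes A)\hot\Cliff_{r,s}$; both depend on the infinite multiplicity of~$\mathcal{H}_A$. Everything else is bookkeeping with lifts and with path lifting along $\Mult(\Comp\otimes A)\to\mathcal{Q}(\Comp\otimes A)$.
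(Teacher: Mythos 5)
Your proof is correct and follows essentially the same route as the paper: establish that the construction is a semigroup homomorphism on $\bigsqcup_n\FU_n$, deduce surjectivity on $\DK$ by subtracting a degenerate real odd selfadjoint unitary $F_0$, and obtain injectivity from Roe's reduction of homotopy of Kasparov cycles on the standard module to operator homotopy (after stabilising by degenerates), together with a ``real'' graded Kuiper-type theorem. The paper's proof is terser and defers both the semigroup-homomorphism checks and the injectivity details to~\cite{Roe:Paschke_real}, which is exactly what you reconstruct.
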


\begin{proof}
  Our map is a semigroup homomorphism because the sum in \(\DK\) and
  \(\KKR\) is defined as the direct sum.  As such, it induces a
  group homomorphism on the Grothendieck group, which we may
  restrict to the subgroup \(\DK(A)\).  There is a real, odd,
  selfadjoint unitary operator
  \(F_0 \in \Bound(\mathcal{H}_A \hot \Cliff_{r,s})\), and any such
  operator defines a degenerate Kasparov cycle.  Such degenerate
  cycles represent zero in KK-theory.  If
  \(F\in \mathcal{FU}(\mathcal{Q}(\Comp \otimes A)\hot
  \Cliff_{r,s})\), then \([F] - [F_0] \in \DK(A)\) is mapped to the
  same \(\KK\)-class as~\(F\).  Thus the induced map on \(\DK(A)\)
  remains surjective.  Roe shows in~\cite{Roe:Paschke_real} that it
  is also injective; the main reason for this is that ``homotopic''
  KK-cycles become ``operator homotopic'' after adding degenerate
  cycles.
\end{proof}

Roe further combines the isomorphism in
Lemma~\ref{lem:Daele_vs_Kasparov} with the boundary map in the long
exact sequence for~\(\DK\) for the extension of \(\Cst\)\nb-algebras
\[
  \Comp \otimes A \hot \Cliff_{r,s}
  \into \Mult(\Comp \otimes A) \hot \Cliff_{r,s}
  \prto \mathcal{Q}(\Comp \otimes A) \hot \Cliff_{r,s}
\]
to prove Proposition~\ref{pro:Daele_vs_KK}.  The latter boundary map
is an isomorphism because van Daele's \(\K\)\nb-theory, like ordinary
\(\K\)\nb-theory, vanishes for stable multiplier algebras.

Now we combine Lemma~\ref{lem:Daele_vs_Kasparov} with Kasparov's
proof of Bott periodicity (see \cite{Kasparov:Operator_K}*{p.~545f
  and Theorem~7}) in bivariant KK-theory.  Fix \(a,b\in\N\); at some
point, we will assume \(a\ge1\), but for now we do not need this.
Let
\[
  D\defeq \Cont_0(\R^{a,b}) \otimes \Cliff_{a,b}
  = \Cont_0(\R^{a,b}) \hot \Cliff_{a,b}.
\]
Kasparov constructs an element in \(\KKR_0(\C,D)\) and proves that
it is invertible by defining an inverse and computing the Kasparov
products.  The definition of Kasparov's KK-class is based on the
unbounded continuous function
\begin{equation}
  \label{eq:tildebeta}
  \tilde{\beta}_{a,b} \colon  \R^{a,b} \to \Cliff_{a,b},\qquad
  (x_1,\ldots,x_{a+b}) \mapsto \sum_{i=1}^{a+b}x_i\gamma_i.
\end{equation}

\begin{lemma}
  \(\tilde{\beta}_{a,b}\) is selfadjoint, odd and real, and
  \(\tilde{\beta}_{a,b}(x)^2 = \norm{x}^2\).
\end{lemma}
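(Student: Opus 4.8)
The plan is to verify the four assertions pointwise in $x = (x_1,\dotsc,x_{a+b}) \in \R^{a,b}$, using only the defining relations of $\Cliff_{a,b}$: the generators $\gamma_1,\dotsc,\gamma_{a+b}$ are odd, selfadjoint and unitary, hence satisfy $\gamma_i^2 = 1$; they pairwise anticommute; and $\gamma_1,\dotsc,\gamma_a$ together with $\ima\gamma_{a+1},\dotsc,\ima\gamma_{a+b}$ are real. Since $\Cliff_{a,b}$ is finite\nb-dimensional there are no analytic subtleties even though $\tilde\beta_{a,b}$ is unbounded; every claim is an identity in the algebra $\Cliff_{a,b}$ for each fixed~$x$.

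Selfadjointness and oddness are immediate. As each coordinate $x_i$ is a real scalar, $\tilde\beta_{a,b}(x)^* = \sum_{i=1}^{a+b} x_i\gamma_i^* = \sum_{i=1}^{a+b} x_i\gamma_i = \tilde\beta_{a,b}(x)$, and a real\nb-linear combination of odd elements is odd. For the Clifford square I would expand $\bigl(\sum_i x_i\gamma_i\bigr)^2 = \sum_i x_i^2\,\gamma_i^2 + \sum_{i<j} x_ix_j(\gamma_i\gamma_j + \gamma_j\gamma_i)$; the diagonal terms contribute $\sum_i x_i^2 \cdot 1 = \norm{x}^2$, and the off\nb-diagonal terms vanish by anticommutativity. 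This gives $\tilde\beta_{a,b}(x)^2 = \norm{x}^2$.

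The one point needing a little care is reality. Writing $\rinv_{\Cliff}$ for the conjugate\nb-linear involution of $\Cliff_{a,b}$ and $\rinv_{\R^{a,b}}$ for the involution flipping the sign of the last~$b$ coordinates, being ``real'' means $\rinv_{\Cliff}\bigl(\tilde\beta_{a,b}(\rinv_{\R^{a,b}}x)\bigr) = \tilde\beta_{a,b}(x)$ for all~$x$. Because $\rinv_{\Cliff}$ is conjugate\nb-linear and fixes $\gamma_i$ for $i\le a$ and $\ima\gamma_i$ for $i>a$, one has $\rinv_{\Cliff}(\gamma_i) = \gamma_i$ for $i\le a$ and $\rinv_{\Cliff}(\gamma_i) = -\gamma_i$ for $i>a$. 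Applying $\rinv_{\Cliff}$ to $\tilde\beta_{a,b}(\rinv_{\R^{a,b}}x) = \sum_{i\le a} x_i\gamma_i - \sum_{i>a} x_i\gamma_i$ then produces, in each of the last~$b$ summands, two sign changes that cancel, while the first~$a$ summands are unchanged; the result is exactly $\tilde\beta_{a,b}(x)$. There is no genuine obstacle in this lemma; the only thing to get right is the bookkeeping of these two sign conventions built into the ``real'' structure of $\Cliff_{a,b}$.
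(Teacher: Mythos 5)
Your proof is correct and follows essentially the same route as the paper: selfadjointness and oddness are immediate from the corresponding properties of the $\gamma_i$, the square is computed by expanding and using $\gamma_i^2=1$ together with anticommutativity, and reality follows from tracking the sign conventions on both the coordinate functions and the Clifford generators. You simply spell out the reality verification more explicitly (writing out $\rinv_{\Cliff}(\gamma_i)=\pm\gamma_i$ and evaluating at a point), whereas the paper phrases the same cancellation of signs more compactly by pairing ``real'' coordinate functions with ``real'' generators and ``imaginary'' with ``imaginary''.
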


\begin{proof}
  First, \(\tilde{\beta}_{a,b}\) is selfadjoint because
  \(x_i^* = x_i\) and \(\gamma_i^* = \gamma_i\) for all~\(i\).  It
  is odd because all~\(\gamma_i\) are odd and the grading on
  \(\Cont_0(\R^{a,b})\) is trivial.  Finally,
  \(\tilde{\beta}_{a,b}\) is real because~\(\gamma_i\) is real
  whenever~\(x_i\) is real and~\(\gamma_i\) is imaginary
  whenever~\(x_i\) is imaginary.  The formula
  \(\tilde{\beta}_{a,b}(x)^2 = \norm{x}^2\) holds because
  \(\gamma_i^2=1 \) for all \(1 \leq i \leq a+b\) and
  the~\(\gamma_i\) anticommute.
\end{proof}  

The bounded transform of~\(\tilde{\beta}_{a,b}\) is
\[
  \tilde\beta'_{a,b}\colon  \R^{a,b} \to \Cliff_{a,b}, \qquad x\mapsto
  \tilde{\beta}_{a,b}(x) / (1 + \tilde{\beta}_{a,b}(x)^2)^{1/2}.
\]
It satisfies
\(1-(\tilde\beta'_{a,b})^2= (1+\norm{x}^2)^{-1/2} \in \Comp(D)\) and
is odd, selfadjoint and real because~\(\tilde{\beta}_{a,b}\) is.
Hence \([\tilde\beta'_{a,b}] \in \KKR_0(\C, D)\).  Kasparov proves
that \([\tilde\beta'_{a,b}]\) generates \(\KKR_0(\C, D) \cong \Z\).
  
To plug \([\tilde\beta'_{a,b}]\) into the isomorphism in
Lemma~\ref{lem:Daele_vs_Kasparov}, we choose a degenerate
cycle~\(F_0\).  Adding~\(F_0\) replaces the underlying Hilbert
module~\(D\) of~\([\tilde\beta'_{a,b}]\) by~\(\mathcal{H}_D\).  Thus
\(\tilde\beta'_{a,b}\oplus F_0\) and~\(F_0\) are elements of
\(\mathcal{FU}\bigl(\mathcal{Q}(\Cont_0(\R^{a,b}, \Comp \otimes
\Cliff_{a,b}))\bigr)\) and \([\tilde\beta'_{a,b} \oplus F_0] - [F_0]\)
represents a class in
\(\DK\bigl(\mathcal{Q}(\Cont_0(\R^{a,b}, \Comp \otimes
\Cliff_{a,b}))\bigr)\).  Since degenerate KK-cycles represent zero,
\([\tilde\beta'_{a,b} \oplus F_0] - [F_0]\) is a generator for
\(\DK(\mathcal{Q}(\Cont_0(\R^{a,b}) \otimes \Comp \otimes
\Cliff_{a,b})) \cong \Z\).  Here the choice of~\(F_0\) does not matter.

We may also interpret the passage from~\(\tilde\beta^{a,b}\)
to~\(\tilde\beta'^{a,b}\)
as replacing~\(\R^{a,b}\) by
\[
  B^{a,b} \defeq \setgiven{x\in\R^{a,b}}{\norm{x}<1},
\]
which is homeomorphic to~\(\R^{a,b}\) through the map
\(x\mapsto x/(1+\norm{x}^2)^{1/2}\).  Let~\(\bar{B}^{a,b}\) be the
closure of~\(B^{a,b}\) in~\(\R^{a,b}\).  The boundary
\(\bar{B}^{a,b} \setminus B^{a,b}\) is the \(a+b-1\)-dimensional
unit sphere \(\Sphere^{a,b} \subseteq \R^{a,b}\) with the ``real''
involution of~\(\R^{a,b}\).  There is a canonical embedding
\(\Cont(\bar{B}^{a,b}) \otimes \Cliff_{a,b} \hookrightarrow
\Contb(B^{a,b}) \otimes \Cliff_{a,b} = \Mult(\Cont_0(B^{a,b})
\otimes \Cliff_{a,b})\).  It induces a canonical embedding
\(\Cont(\Sphere^{a,b}) \otimes \Cliff_{a,b} \hookrightarrow
\Mult(\Cont_0(\R^{a,b}) \otimes \Cliff_{a,b})/ (\Cont_0(\R^{a,b})
\otimes \Cliff_{a,b})\).  Its image contains the image
of~\(\tilde\beta'_{a,b}\) in this quotient.  Namely,
\([\tilde\beta'_{a,b}]\) comes from
\(\beta_{a,b} \defeq \tilde\beta_{a,b}|_{\Sphere^{a,b}} \in
\mathcal{FU}(\Cont(\Sphere^{a,b}) \otimes \Cliff_{a,b})\); this is
the same~\(\beta_{a,b}\) as in~\eqref{eq:beta}.

We assume from now on that \(a\ge 1\).  Then the first Clifford
generator \(\gamma_1\in \Cliff_{a,b}\) is an odd selfadjoint real
unitary.  The constant function with value~\(\gamma_1\) belongs to
\(\mathcal{FU}(\Cont(\bar{B}^{a,b}) \otimes \Cliff_{a,b})\) and is
mapped to a degenerate \(\KK\)-cycle for \(\KKR_0(\C,D)\).  So we
map pick~\(F_0\) to be a direct sum of countably many copies
of~\(\gamma_1\) above.

\begin{lemma}
  \label{lem:lift_beta_to_sphere}
  Let \(a\ge1\).  The class of
  \([\beta_{a,b}] - [\gamma_1] \in \DK(\Cont(\Sphere^{a,b}) \otimes
  \Cliff_{a,b})\) is mapped to a generator of \(\KKR_0(\C,D)\).
\end{lemma}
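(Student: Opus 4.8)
The plan is to reduce the statement to facts already in hand: we have seen that $[\tilde\beta'_{a,b}\oplus F_0]-[F_0]$ generates $\DK\bigl(\mathcal{Q}(\Cont_0(\R^{a,b})\otimes\Comp\otimes\Cliff_{a,b})\bigr)\cong\Z$ and that, under the isomorphism of Lemma~\ref{lem:Daele_vs_Kasparov}, it corresponds to Kasparov's generator $[\tilde\beta'_{a,b}]$ of $\KKR_0(\C,D)$. So it is enough to produce a unital, graded, ``real'' \Star{}homomorphism
\[
  \bar\Phi\colon \Cont(\Sphere^{a,b})\otimes\Cliff_{a,b}\longrightarrow
  \mathcal{Q}(\Comp\otimes D)
\]
under which $\bar\Phi(\beta_{a,b})$ and $\bar\Phi(\gamma_1)$ are the images in $\mathcal{Q}(\Comp\otimes D)$ of $\tilde\beta'_{a,b}\oplus F_0$ and of $F_0$, respectively. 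Then the induced map~$\bar\Phi_*$ sends $[\beta_{a,b}]-[\gamma_1]$ to $[\tilde\beta'_{a,b}\oplus F_0]-[F_0]$, and composing with Lemma~\ref{lem:Daele_vs_Kasparov} lands on Kasparov's generator, as required.

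To build $\bar\Phi$ I would pass to the contractible ball~$\bar B^{a,b}$. Realising $\mathcal{H}_D$ as a countable direct sum of copies of~$D$, and combining the canonical inclusion $\Cont(\bar B^{a,b})\otimes\Cliff_{a,b}\hookrightarrow\Contb(B^{a,b})\otimes\Cliff_{a,b}=\Mult(D)$ with evaluation at the pole $N=(1,0,\dotsc,0)$, one obtains a unital \Star{}homomorphism
\[
  \Phi\colon \Cont(\bar B^{a,b})\otimes\Cliff_{a,b}\longrightarrow\Bound(\mathcal{H}_D),
  \qquad f\longmapsto f|_{B^{a,b}}\oplus f(N)\oplus f(N)\oplus\dotsb,
\]
where the first summand carries the multiplier $f|_{B^{a,b}}$ and each later summand the constant multiplier $f(N)\in\Cliff_{a,b}$. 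One checks that $\Phi$ really maps into $\Bound(\mathcal{H}_D)=\Mult(\Comp\otimes D)$ (the diagonal entries have norm at most $\norm{f}_\infty$), that it is graded (left multiplication by an odd element is an odd operator), and that it is ``real'' (here $\rinv(N)=N$ is used). A function in $\Cont_0(B^{a,b})$ vanishes at $N\in\partial B^{a,b}$, so $\Phi$ sends the ideal $D=\Cont_0(B^{a,b})\otimes\Cliff_{a,b}$ into $\Comp(\mathcal{H}_D)=\Comp\otimes D$, and conversely only such functions go into the compacts; hence $\Phi$ descends to the wanted~$\bar\Phi$. Finally, lifting $\beta_{a,b}$ to the function $y\mapsto\sum_j y_j\gamma_j$ on~$\bar B^{a,b}$ --- whose restriction to $B^{a,b}$ is exactly $\tilde\beta'_{a,b}$ under $B^{a,b}\cong\R^{a,b}$, and whose value at~$N$ is $\gamma_1$ --- and lifting $\gamma_1$ to the constant function~$\gamma_1$, the required formulas $\bar\Phi(\beta_{a,b})=[\tilde\beta'_{a,b}\oplus F_0]$ and $\bar\Phi(\gamma_1)=[F_0]$ follow directly from the definitions of~$\Phi$ and of~$F_0$.

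I expect the only delicate point to be the bookkeeping for~$\Phi$: verifying that it is a well-defined unital graded ``real'' \Star{}homomorphism into $\Bound(\mathcal{H}_D)$ with $\Phi^{-1}(\Comp\otimes D)=D$, so that $\bar\Phi$ is genuinely defined and the two identifications hold on the nose rather than merely up to homotopy. The hypothesis $a\ge1$ enters precisely here, in two essential ways: the pole~$N$ must be a fixed point of~$\rinv$, and $\gamma_1$ must be an odd selfadjoint ``real'' unitary, so that $\Phi$ is indeed ``real'' and the constant cycle with value~$\gamma_1$ --- hence~$F_0$ --- makes sense.
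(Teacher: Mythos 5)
Your proof is correct and follows essentially the same route as the paper: both rest on extending $\beta_{a,b}$ over $\bar B^{a,b}$ to recover $\tilde\beta'_{a,b}$, choosing $F_0$ as countably many copies of the constant $\gamma_1$, and transporting $[\beta_{a,b}]-[\gamma_1]$ to $[\tilde\beta'_{a,b}\oplus F_0]-[F_0]$ inside $\DK(\mathcal{Q}(\Comp\otimes D))$ before invoking Lemma~\ref{lem:Daele_vs_Kasparov}. The one genuine refinement you add is to fill the paper's non-unital inclusion of $\Cont(\Sphere^{a,b})\otimes\Cliff_{a,b}$ into the corona to an honest unital ``real'' graded \Star{}homomorphism $\bar\Phi$ by putting $\ev_N$ on the complementary corners, which makes the induced map on $\DK$ immediate rather than relying on the implicit rule $[e]-[f]\mapsto[e\oplus F_0]-[f\oplus F_0]$; this is a cleaner bookkeeping of the same idea, and your observation that $a\ge1$ is used exactly so that $N$ is $\rinv$-fixed and $\gamma_1$ is an odd real selfadjoint unitary is right.
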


\begin{proof}
  The inclusion of \(\Cont(\Sphere^{a,b}) \otimes \Cliff_{a,b}\)
  into
  \(\mathcal{Q}(\Cont_0(\R^{a,b}) \otimes \Comp \otimes
  \Cliff_{a,b})\) is not unital.  The map on van Daele
  \(\K\)\nb-theory that it induces sends \([e] - [f]\) for
  \(e,f\in \FU(\Cont(\Sphere^{a,b}) \otimes \Cliff_{a,b})\) to
  \([e \oplus F_0] - [f \oplus F_0]\).  For our choice of~\(F_0\)
  above, it therefore sends \([\beta_{a,b}] - [\gamma_1]\) to
  \([\beta_{a,b} \oplus F_0] - [F_0] \in
  \DK\bigl(\mathcal{Q}(\Cont_0(\R^{a,b}) \otimes \Comp \otimes
  \Cliff_{a,b})\bigr)\).  We already know that Roe's isomorphism
  maps the latter to a generator of \(\KKR_0(\C,D)\).
\end{proof}

Since \(a\ge1\), the two poles \(S\) and~\(N\) in~\ref{eq:poles} are
fixed by the ``real'' involution on~\(\Sphere^{a,b}\).  We choose
\(N\in\Sphere^{a,b}\) as a point at infinity and identify
\(\Sphere^{a,b}\setminus\{N\} \cong \R^{a-1,b}\) by stereographic
projection.  The \(\Cst\)\nb-algebra extension
\[
  \begin{tikzcd}
    \Cont_0(\R^{a-1,b}) \ar[r,>->]&
    \Cont(\Sphere^{a,b}) \ar[r, "\ev_N", ->>] &
    \R
  \end{tikzcd}
\]
splits by taking constant functions.  Since van Daele's \(\K\)\nb-theory is
split exact
\[
  \DK(\Cont(\Sphere^{a,b}) \otimes \Cliff_{r,s})
  \cong \DK(\Cont_0(\R^{a-1,b}) \otimes \Cliff_{r,s})
  \oplus \DK(\Cliff_{r,s}).
\]

\begin{proposition}
  There is a Bott periodicity isomorphism
  \[
    \DK(\Cont_0(\R^{a-1,b}) \otimes \Cliff_{a,b})
    \cong \KKR_0(\C,\Cont_0(\R^{a-1,b}) \otimes \Cliff_{a-1,b})
    \cong \Z.
  \]
  Under the two isomorphisms above,
  \([\beta_{a,b}] - [\gamma_1] \in \DK(\Cont(\Sphere^{a,b}) \otimes
  \Cliff_{a,b})\) is mapped to
  \((1,0) \in \Z \oplus \DK(\Cliff_{a,b})\).
\end{proposition}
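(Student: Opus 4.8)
The plan is to assemble the two displayed isomorphisms from results already at hand and then compute the image of the single class $[\beta_{a,b}]-[\gamma_1]$, the real content being packed into Lemma~\ref{lem:lift_beta_to_sphere}. For the isomorphisms: the first, $\DK(\Cont_0(\R^{a-1,b})\otimes\Cliff_{a,b})\cong\KKR_0(\C,\Cont_0(\R^{a-1,b})\otimes\Cliff_{a-1,b})$, is Proposition~\ref{pro:Daele_vs_KK} with $A=\Cont_0(\R^{a-1,b})$ and $(r,s)=(a-1,b)$ (valid for this non-unital~$A$ because $\DK$ and $\KKR_0(\C,-)$ are both compatible with the split extension $A\into A^+\prto\C$, so the isomorphism for~$A^+$ restricts to kernels). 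The second, $\KKR_0(\C,\Cont_0(\R^{a-1,b})\otimes\Cliff_{a-1,b})\cong\Z$, is Kasparov's Bott periodicity, recalled above for all pairs in~$\N^2$ and here used for $(a-1,b)$ (this is the only place $a\ge1$ enters, ensuring $a-1\in\N$), with generator the bounded transform $[\tilde\beta'_{a-1,b}]$. Composing, $\DK(\Cont_0(\R^{a-1,b})\otimes\Cliff_{a,b})\cong\Z$.

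Next I would pin down the $\DK(\Cliff_{a,b})$-coordinate of $[\beta_{a,b}]-[\gamma_1]$ in the split decomposition stated just above the proposition: that coordinate is the image under $\DK(\ev_N\otimes\Id_{\Cliff_{a,b}})$. Since $N=(1,0,\dotsc,0)$ and $\beta_{a,b}=\sum_j x_j\gamma_j|_{\Sphere^{a,b}}$ by~\eqref{eq:beta}, we have $\ev_N(\beta_{a,b})=\gamma_1$, while $\ev_N$ fixes the constant function~$\gamma_1$; hence $\DK(\ev_N\otimes\Id)\bigl([\beta_{a,b}]-[\gamma_1]\bigr)=[\gamma_1]-[\gamma_1]=0$. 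So $[\beta_{a,b}]-[\gamma_1]$ already lies in the summand $\DK(\Cont_0(\R^{a-1,b})\otimes\Cliff_{a,b})\cong\Z$, and it remains only to show it generates that group.

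For this I would invoke Lemma~\ref{lem:lift_beta_to_sphere} directly: the composite of the inclusion $\Cont(\Sphere^{a,b})\otimes\Cliff_{a,b}\hookrightarrow\mathcal{Q}(\Cont_0(\R^{a,b})\otimes\Comp\otimes\Cliff_{a,b})$ with Roe's isomorphism (Lemma~\ref{lem:Daele_vs_Kasparov}) sends $[\beta_{a,b}]-[\gamma_1]$ to a generator of $\KKR_0(\C,\Cont_0(\R^{a,b})\otimes\Cliff_{a,b})\cong\Z$. Restricting that composite to the subgroup $\DK(\Cont_0(\R^{a-1,b})\otimes\Cliff_{a,b})$ — which contains $[\beta_{a,b}]-[\gamma_1]$ by the previous paragraph — gives a homomorphism $\Z\to\Z$ whose image contains a generator, hence is onto, hence bijective; so $[\beta_{a,b}]-[\gamma_1]$, as the preimage of a generator, generates $\DK(\Cont_0(\R^{a-1,b})\otimes\Cliff_{a,b})\cong\Z$. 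Therefore under the two isomorphisms of the first paragraph it maps to $\pm1\in\Z$; fixing the sign of the Bott periodicity isomorphism so that this image is~$+1$ (equivalently, normalising $[\tilde\beta'_{a-1,b}]$ to~$1$ and checking the orientation) and recalling that the $\DK(\Cliff_{a,b})$-coordinate vanishes, we obtain~$(1,0)$.

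The step I expect to require the most care is this last one: presenting the reduction cleanly — for instance by noting that the composite kills the $\DK(\Cliff_{a,b})$-summand, since a class there is represented by constant $\mathcal{FU}$-elements over~$\Sphere^{a,b}$ and, the stabiliser~$F_0$ being itself constant, the composite produces a constant odd selfadjoint unitary, i.e.\ a degenerate Kasparov cycle, which is zero in $\KKR_0$ — and, if one insists on the exact sign rather than merely ``a generator'', on tracking orientations through stereographic projection and Bott periodicity. The remaining ingredients — the two isomorphisms, the evaluation $\ev_N(\beta_{a,b})=\gamma_1$, and the fact that a surjective endomorphism of~$\Z$ is bijective — are routine.
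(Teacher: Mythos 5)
Your proof is correct, but it reaches the isomorphism $\DK(\Cont_0(\R^{a-1,b})\otimes\Cliff_{a,b})\cong\Z$ by a different route than the paper. The paper computes this group by applying the $\DK$ long exact sequence to a morphism of $\Cst$-algebra extensions, comparing $\Cont_0(\R^{a,b})\into\Cont(\bar B^{a,b}\setminus\{N\})\prto\Cont_0(\Sphere^{a,b}\setminus\{N\})$ with the stable Calkin extension of $\Cont_0(\R^{a,b})\otimes\Comp$, and concluding by the five lemma that the vertical arrow on quotients induces an isomorphism on $\DK\otimes\Cliff_{a,b}$. That explicitly exhibits the isomorphism $\DK(\Cont_0(\R^{a-1,b})\otimes\Cliff_{a,b})\cong\DK(\mathcal{Q}(\Cont_0(\R^{a,b})\otimes\Comp)\otimes\Cliff_{a,b})\cong\KKR_0(\C,\Cont_0(\R^{a,b})\otimes\Cliff_{a,b})\cong\Z$ as the very map appearing in Lemma~\ref{lem:lift_beta_to_sphere}, so the generator claim follows with no further work. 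You instead obtain the abstract isomorphism $\cong\Z$ directly from Roe's isomorphism (Proposition~\ref{pro:Daele_vs_KK}, extended to the non-unital $A=\Cont_0(\R^{a-1,b})$ via the split extension $A\into A^+\prto\C$) composed with Kasparov's Bott periodicity for the pair $(a-1,b)$, and then close the gap to the generator statement by combining Lemma~\ref{lem:lift_beta_to_sphere} with the elementary observation that a surjective endomorphism of $\Z$ is bijective. What each buys: your route tracks the middle term $\KKR_0(\C,\Cont_0(\R^{a-1,b})\otimes\Cliff_{a-1,b})$ stated in the proposition literally, and avoids the morphism-of-extensions diagram; the paper's route proves the stronger fact that the specific inclusion of $\Cont_0(\Sphere^{a,b}\setminus\{N\})$ into the stable Calkin algebra induces an isomorphism on $\DK\otimes\Cliff_{a,b}$, which makes the generator identification immediate. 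Both treat the sign as a normalisation, as the paper's own proof also only exhibits a generator.
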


\begin{proof}
  The value of~\(\beta_{a,b}\) at~\(N\) is~\(\gamma_1\).  So
  evaluation at~\(N\) maps~\([\beta_{a,b}] -[\gamma_1]\) to zero.
  We compute \(\DK(\Cont_0(\R^{a-1,b}) \otimes \Cliff_{a,b})\)
  through the long exact sequences for~\(\DK\) applied to the
  morphism of \(\Cst\)\nb-algebra extensions
  \[
    \begin{tikzcd}
      \Cont_0(\R^{a,b}) \arrow[r, >->] \ar[d,hookrightarrow] &
      \Cont(\bar{B}^{a,b}\backslash{\{N\}})
      \arrow[r, ->>] \ar[d, hook] &
      \Cont_0(\Sphere^{a,b}\backslash\{N\})
      \ar[d, hook]\\
      \Cont_0(\R^{a,b}) \otimes \Comp \arrow[r, >->] &
      \Mult(\Cont_0(\R^{a,b}) \otimes \Comp)
      \arrow[r, ->>] &
      \mathcal{Q}(\Cont_0(\R^{a,b}) \otimes \Comp).
    \end{tikzcd}
  \]
  The vertical arrow
  \(\Cont(\bar{B}^{a,b}\backslash{\{N\}}) \hookrightarrow
  \Mult(\Cont_0(\R^{a,b}) \otimes \Comp)\) combines the map from
  bounded continuous functions on~\(\R^{a,b}\) to multipliers of
  \(\Cont_0(\R^{a,b})\) and the corner embedding into~\(\Comp\).  It
  induces an isomorphism on~\(\DK\) because the latter is homotopy
  invariant, stable under tensoring with~\(\Comp\) and vanishes on
  stable multiplier algebras.  The left vertical map induces an
  isomorphism on~\(\DK\) as well.  Therefore, the vertical map on
  the quotients also induces an isomorphism
  \[
    \DK(\Cont_0(\Sphere^{a,b}\backslash\{N\}))
    \cong \DK(\mathcal{Q}(\Cont_0(\R^{a,b}) \otimes \Comp))
    \cong \KKR_0(\C,D) \cong \Z.
  \]
  Lemma~\ref{lem:lift_beta_to_sphere} shows that the image of
  \([\beta_{a,b}] -[\gamma_1]\) is mapped to a generator of~\(\Z\)
  under these isomorphisms.
\end{proof}

How about generators for \(\DK(\Cont_0(\R^{a-1,b}) \otimes \Cliff_{r,s})\)
for \((r,s) \neq (a,b)\)?  By Bott periodicity, it is no loss of
generality to assume \(r= a+c\) and \(s= b+d\) with \(c,d\ge0\), and
this group is isomorphic to \(\KKR_0(\C, \Cliff_{c,d})\).  The
isomorphism in Kasparov theory is given simply by exterior product.
In order to compute these exterior products explicitly, we simplify
the cycles for \(\KKR_0(\C, \Cliff_{c,d})\).  The following results
may well be known, but the authors are not aware of a reference
where generators for the \(\KO\)\nb-theory of a point are worked out
as cycles for \(\KKR_0(\C, \Cliff_{c,d})\).  Of course, our results
will recover the description of \(\KO^*(\pt)\) through Clifford
modules by Atiyah--Bott--Shapiro~\cite{Atiyah:K_Reality}.

\begin{lemma}
  \label{lem:KK_to_Cliff}
  Any cycle for \(\KKR_0(\C, \Cliff_{c,d})\) is homotopic to one
  with~\(\C\) acting by scalar multiplication and Fredholm operator
  equal to~\(0\).  Isomorphism classes of such cycles are in
  bijection with finitely generated modules over the real subalgebra
  \((\Cliff_{c+1,d})_\R\) in~\(\Cliff_{c+1,d}\).
\end{lemma}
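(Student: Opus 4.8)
The plan is to put an arbitrary cycle into the stated normal form by elementary functional calculus — using crucially that \(\Cliff_{c,d}\) is finite-dimensional — and then to identify the resulting rigid objects with modules over \((\Cliff_{c+1,d})_\R\). Recall that a cycle for \(\KKR_0(\C,\Cliff_{c,d})\) is a triple \((E,\phi,F)\) with \(E\) a \(\Z/2\)\nb-graded ``real'' Hilbert \(\Cliff_{c,d}\)\nb-module, \(\phi\colon\C\to\Bound(E)\) an even, real \Star homomorphism, and \(F\in\Bound(E)\) odd with \(\phi(z)(F-F^*)\), \(\phi(z)(F^2-1)\) and \([\phi(z),F]\) in \(\Comp(E)\) for all \(z\). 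First I would perturb \(F\) by the compact operator \(\phi(1)F(1-\phi(1))+(1-\phi(1))F\phi(1)\), so that \(F\) commutes with the even, real projection \(\phi(1)\), and split the cycle as a direct sum of the nondegenerate cycle on \(\phi(1)E\) and a cycle on \((1-\phi(1))E\) on which \(\C\) acts by \(0\), hence degenerate; since \(\C\) is unital, nondegeneracy forces \(\phi\) to be unital, that is, \(\C\) acts by scalar multiplication. Replacing \(F\) by \((F+F^*)/2\), which now differs from \(F\) by the compact operator \((F^*-F)/2\), I may further assume \(F=F^*\), still odd, with \(F^2-1\in\Comp(E)\). All these moves are compact perturbations, hence homotopies of cycles, and respect the ``real'' involution.

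Next I would use that \(\Cliff_{c,d}\) is a finite-dimensional \(\Cst\)\nb-algebra, so that \(\Comp(E)\) is a finite direct sum of algebras of compact operators on Hilbert spaces; hence the selfadjoint element \(K\defeq F^2-1\in\Comp(E)\) has the classical spectral theory of a compact operator, with \(\sigma(K)\) equal to \(\{0\}\) together with a sequence of finite-multiplicity eigenvalues tending to \(0\). Choosing \(\delta\in(0,1)\) with \(-1+\delta^2\notin\sigma(K)\), the spectral projection \(P\defeq\chi_{(-\infty,-1+\delta^2]}(K)\) lies in \(\Comp(E)\) — the cutoff restricts to a continuous function on \(\sigma(K)\) vanishing at \(0\) — commutes with \(F\), and is even and real because \(K\) is even and real and the cutoff is real-valued. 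On \(PE\), which is finitely generated projective, \(\norm{F|_{PE}}\le\delta<1\) and the linear path \(t\mapsto tF|_{PE}\) deforms \(F|_{PE}\) through cycles to \(0\); on \((1-P)E\), \(F^2\) is invertible and \(t\mapsto F\bigl((1-t)+tF^2\bigr)^{-1/2}|_{(1-P)E}\) deforms \(F|_{(1-P)E}\) through cycles to an odd selfadjoint unitary, that is, a degenerate cycle. Thus the original cycle is homotopic to the direct sum of \((PE,1,0)\), with \(PE\) finitely generated projective, and a degenerate cycle, which I would discard; this proves the first assertion.

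For the bijection I would first observe that a cycle \((E_0,1,0)\) of the stated form is nothing but a finitely generated projective \(\Z/2\)\nb-graded ``real'' Hilbert \(\Cliff_{c,d}\)\nb-module (being a cycle at all requires \(\Id_{E_0}\in\Comp(E_0)\), that is, \(E_0\) finitely generated projective), an isomorphism of such cycles being an even, real unitary module isomorphism. The grading operator of such a module is a real, selfadjoint unitary anticommuting with the odd Clifford generators \(\gamma_1,\dotsc,\gamma_{c+d}\), so adjoining it makes \(E_0\) an ungraded ``real'' Hilbert module over the Clifford algebra on \(\gamma_1,\dotsc,\gamma_{c+d}\) and one further \emph{real} generator, namely \(\Cliff_{c+1,d}\); conversely, letting one of the real generators of \(\Cliff_{c+1,d}\) play the role of the grading operator and the remaining ones act oddly recovers a \(\Z/2\)\nb-graded ``real'' Hilbert \(\Cliff_{c,d}\)\nb-module, and the two assignments are mutually inverse. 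Finally, an ungraded ``real'' Hilbert module over \(\Cliff_{c+1,d}\) is the same as a Hilbert module over the real \(\Cst\)\nb-algebra \((\Cliff_{c+1,d})_\R\); on isomorphism classes the Hilbert-module structure is immaterial — every finitely generated projective module over a \(\Cst\)\nb-algebra admits a compatible inner product, unique up to isomorphism, and module isomorphisms of Hilbert modules are unitary after polar decomposition — and since \((\Cliff_{c+1,d})_\R\) is finite-dimensional, hence semisimple, every finitely generated module over it is projective. Chaining these identifications gives the stated bijection.

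I expect the one genuinely delicate step to be the functional calculus in the second paragraph: the spectral projection \(\chi_{(-\infty,-1+\delta^2]}(K)\) need not exist as an adjointable operator over a general Hilbert module, and the argument relies on \(F^2-1\) being compact and \(\Cliff_{c,d}\) finite-dimensional to place us back into the classical spectral theory of a single Hilbert space. Keeping all the deformations inside ``real'' cycles is then automatic, as only real-valued functions are applied to real, selfadjoint operators.
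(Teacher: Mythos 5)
Your proof is correct, and while the first assertion is proved essentially as in the paper, you take a genuinely different and more direct route to the bijection. For the normal-form step, the paper says tersely that one ``may use functional calculus for \(F\) to arrange that the spectrum of \(F\) consists only of \(\{0,1,-1\}\)''; you supply the missing justification, namely that \(\Cliff_{c,d}\) being finite-dimensional puts one back in classical Hilbert-space spectral theory, where the required spectral projection of the compact operator \(F^2-1\) exists, is compact, and is even and real. For the bijection, the paper's route is: Kasparov stabilisation to realise \(\mathcal{H}\cong p\cdot\Cliff_{c,d}^{2n}\) for a real, even projection, then Murray--von~Neumann equivalence of such projections, and finally a Green--Julg-style argument that the even real subalgebra of \(\hat\Mat_2(\Cliff_{c,d})\) is \((\Cliff_{c+1,d})_\R\). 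You bypass all of this by observing directly that the grading operator, being a real, selfadjoint unitary anticommuting with the odd Clifford generators, promotes a \(\Z/2\)-graded ``real'' \(\Cliff_{c,d}\)-module to an ungraded ``real'' \(\Cliff_{c+1,d}\)-module (concretely, one extends the right \(\Cliff_{c,d}\)-action by letting the new generator act as the grading operator, which is consistent precisely because that operator anticommutes with right multiplication by odd elements); then one passes to the real subalgebra and forgets the Hilbert-module structure. This avoids the stabilisation and crossed-product machinery at the cost of having to justify that Hilbert-module structures on f.g.\ projective modules are unique up to isomorphism, which you correctly invoke. Both routes yield the same identification with \((\Cliff_{c+1,d})_\R\)-modules, and both then appeal to semisimplicity to drop the adjective ``projective''.
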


\begin{proof}
  Let \((\mathcal{H},\varphi,F)\) be a cycle for
  \(\KKR_0(\C,\Cliff_{c,d})\).  First, we may replace it by a
  homotopic one where~\(\C\) acts just by scalar multiplication.
  Then we may use functional calculus for~\(F\) to arrange that the
  spectrum of~\(F\) consists only of \(\{0,1,-1\}\).  The direct
  summand where~\(F\) has spectrum \(\pm1\) is unitary and thus
  gives a degenerate cycle.  Removing that piece gives a cycle with
  \(F=0\), still with~\(\C\) acting by scalar multiplication.  Thus
  the only remaining data is the underlying \(\Z/2\)-graded ``real''
  Hilbert \(\Cliff_{c,d}\)\nb-module, which we still denote
  by~\(\mathcal{H}\).  To give a \(\KK\)-cycle with \(F=0\), the
  identity operator on~\(\mathcal{H}\) must be compact.

  Next we claim that \(\mathcal{H} \cong p\cdot \Cliff_{c,d}^{2n}\)
  for some real, even projection
  \(p\in\hat\Mat_{2n}(\Cliff_{c,d})\); here half of the summands in
  \(\Cliff_{c,d}^{2n}\) have the flipped grading.  First, the
  Kasparov stabilisation theorem implies that there is a real, even
  unitary
  \(\mathcal{H} \oplus (\Cliff_{c,d})^\infty \cong
  (\Cliff_{c,d})^\infty\), where \((\Cliff_{c,d})^\infty\) denotes
  the standard ``real'' graded Hilbert \(\Cliff_{c,d}\)-module, with
  the \(\Z/2\)-grading where half of the summands carry the flipped
  grading.  This gives a real, even projection
  \(p_0 \in \Bound((\Cliff_{c,d})^\infty)\) with
  \(\mathcal{H} \cong p_0 (\Cliff_{c,d})^\infty\).  Since the
  identity on~\(\mathcal{H}\) is compact,
  \(p_0 \in \Comp((\Cliff_{c,d})^\infty)\).  Then~\(p_0\) is
  Murray--von Neumann equivalent --~with even real partial
  isometries~-- to a nearby projection
  \(p\in \hat\Mat_{2n}(\Cliff_{c,d})\).  This implies an isomorphism
  \(\mathcal{H} \cong p\cdot \Cliff_{c,d}^{2n}\) of ``real''
  \(\Z/2\)-graded Hilbert modules.

  It is well known that any idempotent in a \(\Cst\)\nb-algebra is
  Murray--von Neumann equivalent to a projection.  The proof is
  explicit, using the functional calculus.  Therefore, if the
  idempotent we start with is even and real, then so are the
  equivalent projection and the Murray--von Neumann equivalence
  between the two.  Therefore, the isomorphism class
  of~\(\mathcal{H}\) as a ``real'' graded Hilbert module is still
  captured by the Murray--von Neumann equivalence class of~\(p\) as
  a real, even idempotent element.  Let~\(S\) be the ring of even,
  real elements in \(\hat\Mat_2(\Cliff_{c,d})\).  What we end up
  with is that the possible isomorphism classes of~\(\mathcal{H}\)
  are in bijection with Murray--von Neumann equivalence classes of
  idempotents in matrix algebras over~\(S\).  These are, in turn, in
  bijection to isomorphism classes of finitely generated projective
  modules over~\(S\).  The subalgebra of even elements of
  \(\hat\Mat_2(\Cliff_{c,d})\) is isomorphic to the crossed product
  \(\Cliff_{c,d}\rtimes \Z/2\) --~this is an easy special case of
  the Green--Julg Theorem (see~\cite{Julg:K_equivariante}),
  identifying a crossed product for an
  action of a compact group~\(G\) on a \(\Cst\)\nb-algebra~\(A\)
  with the fixed-point algebra of the diagonal \(G\)\nb-action on
  \(A\otimes \Comp(L^2 G)\).  The nontrivial element of~\(\Z/2\)
  gives an extra Clifford generator that commutes with even and
  anticommutes with odd elements of~\(\Cliff_{c,d}\).  Thus the even
  part of \(\hat\Mat_2(\Cliff_{c,d})\) is isomorphic to
  \(\Cliff_{c+1,d}\).  This implies \(S\cong (\Cliff_{c+1,d})_\R\).
  This \(\R\)\nb-algebra is semisimple because its complexification
  is a sum of matrix algebras.  So all finitely generated modules
  over it are projective.
\end{proof}

\begin{remark}
  The lemma only describes isomorphism classes of certain special
  cycles for \(\KKR_0(\C,\Cliff_{c,d})\), it does not compute this
  group.  We do not need this computation, but sketch it anyway.
  Consider a degenerate cycle for \(\KKR_0(\C,\Cliff_{c,d})\) that
  is also finitely generated.  The operator~\(F\) on it is real and
  odd with \(F^2=1\) and commutes with~\(\Cliff_{c,d}\).
  Multiplying~\(F\) with the grading gives a real, odd operator that
  anticommutes with the generators of~\(\Cliff_{c,d}\) and has
  square~\(-1\).  This shows that a \((\Cliff_{c+1,d})_\R\)-module
  admits an operator~\(F\) that makes it a degenerate cycle for
  \(\KKR_0(\C,\Cliff_{c,d})\) if and only if the module structure
  extends to \((\Cliff_{c+1,d+1})_\R\).  Call two
  \((\Cliff_{c+1,d})_\R\)-modules \emph{stably isomorphic} if they
  become isomorphic after adding restrictions of
  \((\Cliff_{c+1,d+1})_\R\)-modules to them.  Since the direct sum
  of \((\Cliff_{c+1,d})_\R\)-modules corresponds to the direct sum
  of Kasparov cycles, stably isomorphic
  \((\Cliff_{c+1,d})_\R\)-modules give the same class in
  \(\KKR_0(\C,\Cliff_{c,d})\).  Now
  Atiyah--Bott--Shapiro~\cite{Atiyah-Bott-Shapiro:Clifford} have
  computed the stable isomorphism classes of
  \((\Cliff_{c+1,d})_\R\)-modules and found that they give the
  \(\KO\)\nb-theory of the point.  As a result, two
  \((\Cliff_{c+1,d})_\R\)-modules give the same class in
  \(\KKR_0(\C,\Cliff_{c,d})\) if \emph{and only if} they are stably
  isomorphic.
\end{remark}

Let us look at the generators of
\(\KKR_0(\C,\Cliff_{c,d}) \cong \Z/2\) for
\(d-c\equiv 1,2 \bmod 8\).  The simplest choice here is to take
\(c=0\) and \(d=1,2\).  It follows from Lemma~\ref{lem:KK_to_Cliff}
that the generator of \(\KKR_0(\C,\Cliff_{c,d}) \cong \Z/2\)
corresponds to a \((\Cliff_{c+1,d})_\R\)-module.  Since the direct
sum of modules becomes the sum in \(\KKR\)-theory, we may pick a
simple module.  For \(c=0\) and \(d=1\), we get
\((\Cliff_{1,1})_\R = \Mat_2(\R)\).  Up to isomorphism, there is a
unique two-dimensional simple module.  Turn \(\Cliff_{0,1}\) into a
\(\Z/2\)-graded Hilbert module over itself and give it the operator
\(F=0\).  The constructions in the proof of
Lemma~\ref{lem:KK_to_Cliff} turn this into a two-dimensional
\((\Cliff_{1,1})_\R\)-module.  So~\(\Cliff_{0,1}\) with \(F=0\)
represents the generator of \(\KKR_0(\C,\Cliff_{0,1}) \cong \Z/2\).

Next, let \(c=0\) and \(d=2\).  Then
\((\Cliff_{1,2})_\R = \Mat_2(\C)\).  It has~\(\C^2\) as its unique
simple module.  So any module of dimension~\(4\) over~\(\R\) is
simple.  Since \(\Cliff_{0,2}\) as a graded Hilbert module over
itself also yields a \(4\)\nb-dimensional module over
\((\Cliff_{1,2})_\R\), the latter is simple.
Thus~\(\Cliff_{0,2}\) with \(F=0\) represents the generator of
\(\KKR_0(\C,\Cliff_{0,2}) \cong \Z/2\).

Having described the generators of \(\KKR_0(\C,\Cliff_{0,d})\) for
\(d=1,2\) in this simple form, computing the exterior product
becomes a trivial matter:

\begin{proposition}
  \label{pro:other_sphere_generators}
  Let \(d\in\{1,2\}\).  Then the isomorphism
  \begin{align*}
    \DK(\Cont(\Sphere^{a,b}) \otimes \Cliff_{a,b+d})
    &\cong \DK(\Cont_0(\R^{a-1,b}) \otimes \Cliff_{a,b+d})
      \oplus \DK(\Cliff_{a,b+d})
    \\&\cong \Z/2 \oplus \DK(\Cliff_{a,b+d})
  \end{align*}
  maps \([\beta_{a,b}] - [\gamma_1]\) to~\((1,0)\).
\end{proposition}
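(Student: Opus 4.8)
The plan is to reduce to the preceding proposition (the case of $\Cliff_{a,b}$, where $[\beta_{a,b}] - [\gamma_1]$ is sent to $(1,0) \in \Z \oplus \DK(\Cliff_{a,b})$) by naturality with respect to the unital inclusion of Clifford algebras $\iota\colon \Cliff_{a,b}\hookrightarrow\Cliff_{a,b+d}$ that fixes $\gamma_1,\dotsc,\gamma_{a+b}$. First I would observe that both $\beta_{a,b}$ and $\gamma_1$ already live in the smaller algebra, so that $[\beta_{a,b}]-[\gamma_1]\in\DK(\Cont(\Sphere^{a,b})\otimes\Cliff_{a,b+d})$ is precisely the image of $[\beta_{a,b}]-[\gamma_1]\in\DK(\Cont(\Sphere^{a,b})\otimes\Cliff_{a,b})$ under~$\iota_*$. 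The split extension $\Cont_0(\R^{a-1,b})\otimes\Cliff_{r,s}\into\Cont(\Sphere^{a,b})\otimes\Cliff_{r,s}\xrightarrow{\ev_N}\Cliff_{r,s}$, with its splitting by constant functions, is natural in the coefficient algebra~$\Cliff_{r,s}$, hence so is the resulting direct-sum decomposition of van Daele $\K$\nb-theory, and $\iota_*$ is the direct sum of the maps it induces on the two summands. On the $\DK(\Cliff_{r,s})$-summand the relevant component of $[\beta_{a,b}]-[\gamma_1]$ is $[\beta_{a,b}(N)]-[\gamma_1]=[\gamma_1]-[\gamma_1]=0$, which maps to~$0$; so everything hinges on the first summand.

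It remains to show that $\iota_*\colon\DK(\Cont_0(\R^{a-1,b})\otimes\Cliff_{a,b})\to\DK(\Cont_0(\R^{a-1,b})\otimes\Cliff_{a,b+d})$, a homomorphism $\Z\to\Z/2$, is onto. To identify it, I would first apply Roe's isomorphism (Proposition~\ref{pro:Daele_vs_KK}), which is natural, to strip off a Clifford generator and rewrite $\iota_*$ as the map $\KKR_0(\C,\Cont_0(\R^{a-1,b})\otimes\Cliff_{a-1,b})\to\KKR_0(\C,\Cont_0(\R^{a-1,b})\otimes\Cliff_{a-1,b+d})$ induced by $\Cliff_{a-1,b}\hookrightarrow\Cliff_{a-1,b+d}$, that is, by $\Id\hot j$ with $j\colon\C\hookrightarrow\Cliff_{0,d}$ the unital inclusion. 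Next, Kasparov's Bott periodicity gives a $\KKR$\nb-equivalence $\Cont_0(\R^{a-1,b})\otimes\Cliff_{a-1,b}\sim\C$ implemented by exterior multiplication with $[\tilde\beta'_{a-1,b}]$; tensoring it with $\Cliff_{0,d}$ gives $\Cont_0(\R^{a-1,b})\otimes\Cliff_{a-1,b+d}\sim\Cliff_{0,d}$. By bifunctoriality of the exterior product with respect to~$j$, the displayed map corresponds under these equivalences to $j_*\colon\KKR_0(\C,\C)\to\KKR_0(\C,\Cliff_{0,d})$.

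To finish, I note that the generator $1\in\KKR_0(\C,\C)\cong\Z$ is the identity cycle $(\C,1,0)$, so $j_*(1)$ is represented by the cycle $(\Cliff_{0,d},1,0)$ — the algebra $\Cliff_{0,d}$ as a graded Hilbert module over itself, with $\C$ acting by scalar multiplication and Fredholm operator~$0$. But we showed right before the statement that, for $d\in\{1,2\}$, this very cycle generates $\KKR_0(\C,\Cliff_{0,d})\cong\Z/2$. Hence $j_*$, and therefore $\iota_*$, is surjective and sends $1$ to the generator. Combined with the vanishing on the $\DK(\Cliff_{a,b+d})$-summand, this yields $[\beta_{a,b}]-[\gamma_1]\mapsto(1,0)$.

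I do not expect a genuine obstacle here; the paper itself advertises this as ``a trivial matter'' once the generators of $\KKR_0(\C,\Cliff_{0,d})$ are in hand. The only point needing care is the naturality bookkeeping — checking that Roe's isomorphism, the split exact sequence, and the Bott equivalence are all compatible with~$\iota$ (equivalently, with~$j$), so that $\iota_*$ really does become $j_*$ — but this is routine functoriality of $\KK$\nb-theory, van Daele $\K$\nb-theory, and the exterior product.
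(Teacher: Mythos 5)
Your proof is correct and closely related to the paper's, but organized quite differently, and the difference is worth noting.

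The paper proceeds in the forward direction: it recalls that the exterior product with a $\KKR_0(\C,\Cliff_{0,d})$-cycle of the form $(\mathcal H,1,0)$ is given by $(\mathcal E,\varphi,F)\mapsto(\mathcal E\hot\mathcal H,\varphi\hot 1,F\hot 1)$, applies this to Kasparov's Bott generator $[\tilde\beta'_{a,b}]$ and the module $\mathcal H=\Cliff_{0,d}$, and observes that the result is just $\tilde\beta'_{a,b}$ reinterpreted as taking values in $\Cliff_{a,b+d}\supseteq\Cliff_{a,b}$.  It then transports this through Roe's isomorphism exactly as in the preceding lemma.  Thus the paper traces the generator of $\Z/2$ through the chain of isomorphisms and lands on $[\beta_{a,b}]-[\gamma_1]$; since the chain is invertible, that settles the claim.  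You instead argue backwards by reduction: you note that $[\beta_{a,b}]-[\gamma_1]$ in the larger algebra is $\iota_*$ of the class already handled in the preceding proposition, and you verify that $\iota_*$ hits the generator of $\Z/2$ by identifying it, through the naturality of Roe's isomorphism and of Bott periodicity, with $j_*\colon\KKR_0(\C,\C)\to\KKR_0(\C,\Cliff_{0,d})$ for the unital inclusion $j\colon\C\hookrightarrow\Cliff_{0,d}$.  At bottom the two routes coincide, because the exterior product with $[(\Cliff_{0,d},1,0)]=j_*(1)$ \emph{is} the map on $\KKR$ induced by $\Id\hot j$ by bifunctoriality.  Your organization has the advantage of reusing the preceding proposition as a black box, so that no new exterior-product computation is performed and the whole burden is on the (admittedly routine but not entirely trivial) verification that Roe's isomorphism, the split exact sequence, and the Bott equivalence are natural with respect to $\iota$.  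The paper's organization avoids that bookkeeping by recomputing directly, at the price of having to recall the exterior-product formula for cycles with vanishing Fredholm operator and, in general, a Morita-equivalence step — which, as the paper itself notes, simplifies to an equality for $\mathcal H=\Cliff_{0,d}$.

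Two small points.  Your reduction of the second summand $\DK(\Cliff_{a,b+d})$ to the evaluation $\beta_{a,b}(N)=\gamma_1$ is of course correct, but notice that the paper's proof of the proposition under discussion does not actually repeat this step — it is carried silently from the $d=0$ proposition, whereas your writeup handles it explicitly, which is a minor improvement in completeness.  Second, when you strip off the Clifford factor via Roe's isomorphism, it is worth saying (as you do implicitly) that you write $\Cliff_{a,b}\cong\Cliff_{1,0}\hot\Cliff_{a-1,b}$ and $\Cliff_{a,b+d}\cong\Cliff_{1,0}\hot\Cliff_{a-1,b+d}$ so that $\iota$ becomes $\Id_{\Cliff_{1,0}}\hot\iota'$; then Roe's isomorphism $\DK(A\hot\Cliff_{1,0})\cong\KKR_0(\C,A)$ applies naturally in $A$ to $\iota'$.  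This is exactly the kind of bookkeeping you flagged, and it goes through without surprises.
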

  
\begin{proof}
  First we recall a general formula for exterior products in a
  simple special case.  Let \((\mathcal{E},\varphi,F)\) be a cycle
  for \(\KKR_0(A,B)\) for some ``real'' \(\Cst\)\nb-algebras
  \(A,B\).  Let \((\mathcal{H},1,0)\) be a cycle for
  \(\KKR_0(\C,\Cliff_{c,d})\).  Then their exterior product is
  represented by the triple
  \((\mathcal{E} \hot \mathcal{H}, \varphi \hot 1, F\hot 1)\); this
  works because~\(\mathcal{H}\) has Fredholm operator~\(0\).  We now
  apply this to Kasparov's Bott periodicity generator
  \([\tilde\beta'_{a,b}]\).  Then we get the \(\Z/2\)-graded Hilbert
  \(\Cont_0(\R^{a,b}) \otimes \Cliff_{a+c,b+d}\)-module
  \(\Cont_0(\R^{a,b}) \otimes \Cliff_{a,b} \hot \mathcal{H}\)
  with~\(\C\) acting by scalar multiplication and with the Fredholm
  operator \(\tilde\beta'_{a,b} \hot 1\).  The same arguments as
  above show that the isomorphism to van Daele's \(\K\)\nb-theory
  transfers this Kasparov cycle to
  \([\beta_{a,b}] - [\gamma_1]\) in
  \(\DK\bigl(\Cont(\Sphere^{a,b}) \otimes \Cliff_{a,b} \hot
  \Comp(\mathcal{H})\bigr)\).  The latter is \(\Z/2\)-equivariantly
  Morita equivalent to
  \(\DK(\Cont(\Sphere^{a,b}) \otimes\Cliff_{a+c,b+d})\) because
  \(\Comp(\mathcal{H})\) is \(\Z/2\)-equivariantly Morita equivalent
  to~\(\Cliff_{c,d}\).  The isomorphism in van Daele's
  \(\K\)\nb-theory induced by this Morita equivalence maps
  \([\beta_{a,b}] - [\gamma_1] \in \DK\bigl(\Cont(\Sphere^{a,b})
  \otimes \Cliff_{a,b} \hot \Comp(\mathcal{H})\bigr)\) to the
  element in \(\DK(\Cont(\Sphere^{a,b}) \otimes \Cliff_{a+c,b+d})\)
  that comes from \((\mathcal{H},1,0)\in\KKR_0(\C,\Cliff_{c,d})\).

  Now we specialise to the case where \(\mathcal{H} = \Cliff_{0,d}\)
  with the obvious structure of \(\Z/2\)-graded ``real'' Hilbert
  \(\Cliff_{0,d}\)-module.  Then
  \(\Comp(\mathcal{H}) \cong \Cliff_{0,d}\) canonically, and there
  is no need to invoke the Morita invariance of van Daele's
  \(\K\)\nb-theory.  The result is just to view \(\beta_{a,b}\) as
  taking values in \(\Cliff_{a,b+d} \supseteq \Cliff_{a,b}\).
\end{proof}

We do not describe the generator for
\(\KKR_0(\C,\Cliff_{0,4}) \cong \Z\).  Since
\(\mathcal{H}=\Cliff_{0,4}\) itself represents~\(0\) in \(\KKR\), a
nontrivial Morita equivalence is needed here.

\section{Some canonical maps in \texorpdfstring{$\KR$\nb-}{KR-}theory}
\label{sec:KR-maps}

We have described explicit elements in the van Daele \(\K\)\nb-theory of the
spheres~\(\Sphere^{a,b}\) for \(a,b\in\N\).  Topological phases are,
however, described by the \(\K\)\nb-theory of the ``real'' torus~\(\T^d\),
where~\(d\) is the dimension of the physical system.
One way to transfer \(\K\)\nb-theory classes from one space to another is
the pullback functoriality.  In Section~\ref{sec:Hamiltonians}, we
are going to use this to pull our \(\K\)\nb-theory generators
on~\(\Sphere^{1,d}\) back to \(\K\)\nb-theory classes on~\(\T^d\) along
certain maps \(\T^d \to \Sphere^{1,d}\).

We now describe this pull back functoriality in detail.  Let
\((X,\rinv_X)\) and \((Y,\rinv_Y)\) be ``real'' locally compact
spaces and let \(b\colon X\to Y\) be a proper continuous map that is
``real'' in the sense that \(\rinv_Y \circ b = b\circ \rinv_X\).
Such a map induces a ``real'' \Star{}homomorphism
\begin{equation}
  \label{eq:def_bstar}
  b^*\colon \Cont_0(Y) \to \Cont_0(X),\qquad
  \psi\mapsto \psi\circ b,
\end{equation}
which for \(c,d\in\N\) induces maps in van Daele's \(\K\)\nb-theory
\[
  b^*\colon \DK(\Cont_0(Y) \otimes \Cliff_{c,d})
  \to \DK(\Cont_0(X) \otimes \Cliff_{c,d}).
\]
We denote it by~\(b^*\) as well because no confusion should be
possible.

There are other ways to transfer \(\K\)\nb-theory classes between spaces.  A
famous example is the Atiyah--Singer index map for a family of
elliptic differential operators.  There are two ways to compute this
index map, one analytic and one topological.  We are going to use
the topological approach.  One of its ingredients is functoriality
for open inclusions: let \(U\subseteq X\) be an open subset with
\(\rinv_X(U) = U\), so that the ``real'' structure~\(\rinv_X\)
restricts to one on~\(U\).  Then extension by zero defines a ``real''
\Star{}homomorphism
\[
  \iota_U\colon \Cont_0(U) \to \Cont_0(X),
\]
which for \(c,d\in\N\) induces maps in van Daele's \(\K\)\nb-theory
\[
  \iota_{U,*}\colon \DK(\Cont_0(U) \otimes \Cliff_{c,d})
  \to \DK(\Cont_0(X) \otimes \Cliff_{c,d}).
\]

The other ingredient of the topological index map is the Thom
isomorphism.  Let \(E \prto X\) be a \(\Z/2\)-equivariant vector
bundle.  Let \(\Cliff_E\) be the Clifford algebra bundle of~\(E\);
this is a locally trivial bundle of finite-dimensional
\(\Z/2\)-graded, real \(\Cst\)\nb-algebras over~\(X\).  Each fibre
\(\Cont_0(E_x) \otimes \Cliff(E_x)\) carries a canonical Kasparov
generator~\(\tilde\beta'_{E_x}\).  Letting~\(x\) vary, these combine
to a class
\begin{equation}
  \label{eq:Thom_with_bundle}
  [\tilde\beta'_E] \in
  \KKR_0(\Cont_0(X),\Cont_0(\abs{E}) \otimes \Cliff_E).  
\end{equation}
This class is also invertible and produces a raw form of the Thom
isomorphism.

\begin{definition}
  \label{def:KR-orientation}
  A \emph{\(\KR\)\nb-orientation} on a \(\Z/2\)-equivariant vector
  bundle \(E\prto X\) is a \(\Z/2\)-graded, ``real'',
  \(\Cont_0(X)\)-linear Morita equivalence between
  \(\Cont_0(X,\Cliff_E)\) and \(\Cont_0(X) \otimes \Cliff_{a,b}\)
  for some \(a,b\in\N\).  That is, it is a full Hilbert
  bimodule~\(M\) over \(\Cont_0(X,\Cliff_E)\) and
  \(\Cont_0(X) \otimes \Cliff_{a,b}\) with a compatible ``real''
  structure and \(\Z/2\)-action and with the extra property that the
  two actions of \(\Cont_0(X)\) by multiplication on the left and
  right are the same.  A vector bundle is called
  \emph{\(\KR\)\nb-orientable} if it has such a
  \(\KR\)\nb-orientation.  We call \(a-b \bmod 8 \in \Z/8\) the
  \emph{\(\KR\)\nb-dimension} \(\dim_\KR E\) of~\(E\).
\end{definition}

Morita equivalence for ``real'' \(\Z/2\)-graded \(\Cst\)\nb-algebras
with an action of a locally compact groupoid is explored by
Moutuou~\cite{Moutuou:Graded_Brauer}.  Our definition is the special
case where the groupoid is the space~\(X\) with only identity
arrows.  The idea to describe \(\K\)\nb-theory orientations through
Morita equivalence goes back to Plymen~\cite{Plymen:Morita_Spinors}.

Two Clifford algebras \(\Cliff_{a,b}\) and~\(\Cliff_{a',b'}\) are
Morita equivalent as ``real'' \(\Z/2\)-graded \(\Cst\)\nb-algebras
if and only if \(a-b \equiv a'-b' \bmod 8\); this is part of the
computation of the Brauer group of the point in
\cite{Moutuou:Graded_Brauer}*{Appendix~A}.  This shows that the
\(\KR\)\nb-dimension is well defined and that it suffices to
consider, say, the cases \(b=0\), \(a\in \{0,1,\dotsc,7\}\) in
Definition~\ref{def:KR-orientation}.  A trivial vector bundle
\(X\times \R^{a,b} \prto X\) is, of course, \(\KR\)\nb-oriented
because \(\Cliff_{X\times\R^{a,b}} = X\times \Cliff_{a,b}\).  Its
\(\KR\)\nb-dimension is \(a-b \bmod 8\).

If \(a=b=0\), then a \(\KR\)\nb-orientation is a \(\Z/2\)-graded, ``real''
Morita equivalence between \(\Cont_0(X,\Cliff_E)\) and
\(\Cont_0(X)\).  This is equivalent to a \(\Z/2\)-graded ``real''
Hilbert \(\Cont_0(X)\)-module~\(S\) with a \(\Cont_0(X)\)-linear
isomorphism \(\Comp(S) \cong \Cont_0(X,\Cliff_E)\).  This
forces~\(S\) to be the space of \(\Cont_0\)-sections of a complex
vector bundle over~\(X\), equipped with a \(\Z/2\)-grading and a
``real'' involution.  Then the isomorphism
\(\Comp(S) \cong \Cont_0(X,\Cliff_E)\) means that the fibre~\(S_x\)
of this vector bundle carries an irreducible representation of
\(\Cliff_{E_x}\).  Thus~\(S\) is a ``real'' spinor bundle for~\(E\).
Allowing \(a,b\neq0\) gives appropriate analogues of the spinor
bundle for ``real'' vector bundles of all dimensions.

The \(\KR\)\nb-orientation induces a \(\KKR\)-equivalence because
\(\KKR\) is Morita invariant.  Together with the
\(\KKR\)-equivalence in~\eqref{eq:Thom_with_bundle}, this gives a
\(\KKR\)-equivalence
\[
  \tau_E \in
  \KKR_0\bigl(\Cont_0(X),\Cont_0(\abs{E}) \otimes \Cliff_{a,b}\bigr)
  \cong \KKR_{-\dim_\KR E}\bigl(\Cont_0(X),\Cont_0(\abs{E})\bigr),
\]
called the \emph{Thom isomorphism class}.  It induces Thom
isomorphisms in \(\K\)\nb-theory
\[
  \KR^n(X) \cong \KR^{n+\dim_\KR E}(E) \qquad
  \text{for }n\in\Z/8.
\]

\begin{example}
  \label{exa:complex_oriented}
  Let \(E\prto X\) be a complex vector bundles with a ``real''
  structure.  A Thom isomorphism for~\(E\) is defined in
  \cite{Atiyah:K_Reality}*{Theorem~2.4}.  It is analogous to the
  Thom isomorphism for complex vector bundles in ordinary complex
  \(\K\)\nb-theory.  Atiyah's Thom isomorphism may also be defined
  using a \(\KR\)-orientation of dimension~\(0\) as defined above.
  Namely, the sum of the complex exterior powers of~\(E\) provides a
  spinor bundle for~\(E\), which also carries a canonical ``real''
  structure to become a \(\KR\)\nb-orientation of
  \(\KR\)\nb-dimension~\(0\) as in
  Definition~\ref{def:KR-orientation}.
\end{example}

\begin{lemma}
  \label{lem:add_KR-orientations}
  Let \(V_1,V_2\) be two \(\Z/2\)-equivariant vector bundles
  over~\(X\).  If two of the vector bundles
  \(V_1,V_2,V_1\oplus V_2\) are \(\KR\)\nb-oriented, then this induces a
  canonical \(\KR\)\nb-orientation on the third one, such that
  \[
    \dim_\KR V_1 + \dim_\KR V_2 = \dim_\KR (V_1 \oplus V_2).
  \]
\end{lemma}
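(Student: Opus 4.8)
The plan is to reduce the statement to standard manipulations of \(\Z/2\)\nb-graded ``real'' \(\Cont_0(X)\)\nb-linear Morita equivalences. Three ingredients are needed. First, the fibrewise identity \(\Cliff(E_{1,x}\oplus E_{2,x})\cong\Cliff(E_{1,x})\hot\Cliff(E_{2,x})\), which is the defining property of the graded tensor product, is compatible with the inner products, the gradings and the \(\Z/2\)\nb-actions on the fibres, and hence globalises to a canonical isomorphism of \(\Z/2\)\nb-graded ``real'' \(\Cont_0(X)\)\nb-algebras
\[
  \Cont_0(X,\Cliff_{V_1\oplus V_2})
  \cong \Cont_0(X,\Cliff_{V_1}) \hot_{\Cont_0(X)} \Cont_0(X,\Cliff_{V_2}).
\]
Secondly, \(\Z/2\)\nb-graded ``real'' \(\Cont_0(X)\)\nb-linear Morita equivalences can be composed by internal tensor product over~\(\Cont_0(X)\) and are invertible via the conjugate bimodule, so their isomorphism classes form an Abelian group under \(\hot_{\Cont_0(X)}\); this is the special case of Moutuou's graded equivariant Brauer group~\cite{Moutuou:Graded_Brauer} for the space~\(X\) with only identity arrows. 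Thirdly, \(\Cliff_{a,b}\hot\Cliff_{a',b'}\cong\Cliff_{a+a',b+b'}\), and \(\Cliff_{n,n}\) is \(\Z/2\)\nb-graded Morita equivalent to~\(\C\) (it is a graded matrix algebra, with the grading in which half the summands are flipped, as in the proof of Lemma~\ref{lem:KK_to_Cliff}); hence \([\Cont_0(X)\otimes\Cliff_{a,b}]\) is invertible in this group with inverse \([\Cont_0(X)\otimes\Cliff_{b,a}]\), and \((a,b)\mapsto a-b\bmod 8\) is a well-defined homomorphism on the subgroup generated by such classes.

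Granting these, the case where \(V_1\) and \(V_2\) are \(\KR\)\nb-oriented is immediate. If \(M_i\) is a \(\Z/2\)\nb-graded ``real'' \(\Cont_0(X)\)\nb-linear Morita equivalence between \(\Cont_0(X,\Cliff_{V_i})\) and \(\Cont_0(X)\otimes\Cliff_{a_i,b_i}\) for \(i=1,2\), then \(M_1\hot_{\Cont_0(X)}M_2\) is, by the first ingredient, a Morita equivalence between \(\Cont_0(X,\Cliff_{V_1\oplus V_2})\) and \((\Cont_0(X)\otimes\Cliff_{a_1,b_1})\hot_{\Cont_0(X)}(\Cont_0(X)\otimes\Cliff_{a_2,b_2})\cong\Cont_0(X)\otimes\Cliff_{a_1+a_2,b_1+b_2}\); this is a \(\KR\)\nb-orientation of \(V_1\oplus V_2\), of dimension \((a_1+a_2)-(b_1+b_2)=\dim_\KR V_1+\dim_\KR V_2\).

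For the case where \(V_1\) and \(V_1\oplus V_2\) are \(\KR\)\nb-oriented, say \(M_1\) a Morita equivalence between \(\Cont_0(X,\Cliff_{V_1})\) and \(\Cont_0(X)\otimes\Cliff_{a_1,b_1}\) and \(N\) one between \(\Cont_0(X,\Cliff_{V_1\oplus V_2})\) and \(\Cont_0(X)\otimes\Cliff_{a,b}\), I would solve for \(V_2\) in the Brauer group, using \([\Cont_0(X,\Cliff_{V_1\oplus V_2})]=[\Cont_0(X,\Cliff_{V_1})]\cdot[\Cont_0(X,\Cliff_{V_2})]\) and invertibility to get \([\Cont_0(X,\Cliff_{V_2})]=[\Cont_0(X,\Cliff_{V_1})]^{-1}\cdot[\Cont_0(X,\Cliff_{V_1\oplus V_2})]\). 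Realised at the level of bimodules — by combining \(N\), \(M_1\) and the standard equivalence \(\Cliff_{a_1+b_1,a_1+b_1}\sim\C\) via internal tensor products over \(\Cont_0(X)\) — this yields a \(\Z/2\)\nb-graded ``real'' Morita equivalence between \(\Cont_0(X,\Cliff_{V_2})\) and \((\Cont_0(X)\otimes\Cliff_{b_1,a_1})\hot_{\Cont_0(X)}(\Cont_0(X)\otimes\Cliff_{a,b})\cong\Cont_0(X)\otimes\Cliff_{a+b_1,b+a_1}\), i.e.\ a \(\KR\)\nb-orientation of \(V_2\) of dimension \((a+b_1)-(b+a_1)=\dim_\KR(V_1\oplus V_2)-\dim_\KR V_1\), as required. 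The remaining case, with \(V_2\) and \(V_1\oplus V_2\) oriented, follows by the same argument after the flip \(V_1\oplus V_2\cong V_2\oplus V_1\) (equivalently, by graded-commutativity of \(\hot_{\Cont_0(X)}\)).

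The word ``canonical'' here means that the output bimodule is assembled functorially from the given ones together with the fixed isomorphisms above; one should also check that the three assignments are mutually inverse — e.g.\ that feeding the induced orientation of \(V_2\) and the given one of \(V_1\) back into the first case returns the given orientation of \(V_1\oplus V_2\) — which is a routine diagram chase. I expect the only genuine obstacle to be bookkeeping: verifying that internal tensor products and conjugate bimodules of Morita equivalences preserve both the \(\Z/2\)\nb-grading and the ``real'' structure, and that the resulting group law behaves exactly as in the ungraded, non-``real'' case. This is precisely what Moutuou's framework supplies, and the spot most prone to a sign error is the grading convention entering the equivalence \(\Cliff_{n,n}\sim\C\).
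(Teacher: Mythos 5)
Your proof is correct and takes essentially the same route as the paper's: both rest on the isomorphism \(\Cliff_{V_1\oplus V_2} \cong \Cliff_{V_1} \hot \Cliff_{V_2}\) of Clifford algebra bundles, on tensoring Morita equivalences over \(\Cont_0(X)\), and on using \(\Cliff_{b,a}\) to invert \(\Cliff_{a,b}\) up to Morita equivalence. You frame the second case more formally via the graded Brauer group, while the paper just tensors the given equivalences with \(\Cliff_{b,a}\) directly — but the underlying argument is the same.
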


\begin{proof}
  A \(\Cont_0(X)\)-linear Morita equivalence is the same as a
  continuous bundle over~\(X\) whose fibres are Morita equivalences.
  Such bundles may be tensored together over~\(X\).  With the graded
  tensor product over~\(X\), we get
  \(\Cliff_{V_1 \oplus V_2} \cong \Cliff_{V_1} \hot \Cliff_{V_2}\).
  Therefore, \(\KR\)\nb-orientations for \(V_1\) and~\(V_2\) induce one for
  \(V_1 \oplus V_2\).  Assume, conversely, that \(V_1 \oplus V_2\)
  and~\(V_2\) are \(\KR\)\nb-oriented.  Then the bundle \(\Cliff_{V_2}\) is
  Morita equivalent to the trivial bundle with
  fibre~\(\Cliff_{a,b}\) for some \(a,b\in\N\).  So
  \(\Cliff_{V_2} \hot \Cliff_{b,a}\) is Morita equivalent to a
  trivial bundle of matrix algebras, making it Morita equivalent to
  the trivial rank-1 bundle \(X\times \C\).  Therefore,
  \(\Cliff_{V_1}\) is Morita equivalent to
  \(\Cliff_{V_1} \hot \Cliff_{V_2} \hot \Cliff_{b,a} \cong
  \Cliff_{V_1 \oplus V_2} \hot \Cliff_{b,a}\).  Since
  \(V_1 \oplus V_2\) is \(\KR\)\nb-oriented, this is also Morita equivalent
  to some trivial Clifford algebra bundle.  This gives a
  \(\KR\)\nb-orientation on~\(V_1\).
\end{proof}

The following proposition clarifies how many \(\KR\)\nb-orientations
a \(\KR\)\nb-orientable vector bundle admits.

\begin{proposition}
  \label{pro:KR-orientation_unique}
  Two \(\KR\)\nb-orientations for the same \(\Z/2\)\nb-equivariant
  vector bundle become isomorphic after tensoring one of them with a
  \(\Z/2\)\nb-graded ``real'' complex line bundle \(L\prto X\),
  which is determined uniquely.
\end{proposition}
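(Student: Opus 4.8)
The plan is to reduce the statement to the structure of the Picard groupoid of $\Z/2$-graded ``real'' Morita equivalences over $\Cont_0(X)$, and to identify its relevant part with line bundles. First I would set up the situation: let $M_1$ and $M_2$ be two $\KR$-orientations on $E\prto X$, i.e.\ $\Z/2$-graded ``real'' $\Cont_0(X)$-linear Morita equivalences from $\Cont_0(X,\Cliff_E)$ to $\Cont_0(X)\otimes\Cliff_{a_i,b_i}$. Using the composition (tensor product over $\Cont_0(X,\Cliff_E)$) of $M_2$ with the inverse of $M_1$, one obtains a $\Z/2$-graded ``real'' $\Cont_0(X)$-linear Morita equivalence $N\defeq M_1^{-1}\hot_{\Cont_0(X,\Cliff_E)} M_2$ from $\Cont_0(X)\otimes\Cliff_{a_1,b_1}$ to $\Cont_0(X)\otimes\Cliff_{a_2,b_2}$. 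Since such a self-Morita equivalence forces the source and target to be Morita equivalent, the computation of the Brauer group of the point recalled after Definition~\ref{def:KR-orientation} gives $a_1-b_1\equiv a_2-b_2\bmod 8$; thus the two $\KR$-orientations have the same $\KR$-dimension, and after composing with a fixed trivial Morita equivalence $\Cont_0(X)\otimes\Cliff_{a_2,b_2}\sim\Cont_0(X)\otimes\Cliff_{a_1,b_1}$ we may assume $(a_1,b_1)=(a_2,b_2)=(a,b)$.

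Next I would analyse a $\Z/2$-graded ``real'' $\Cont_0(X)$-linear self-Morita equivalence $N$ of $\Cont_0(X)\otimes\Cliff_{a,b}$. Tensoring with the inverse $\Cont_0(X)\otimes\Cliff_{b,a}$ and using $\Cliff_{a,b}\hot\Cliff_{b,a}\cong\Mat_{2^{a+b}}(\C)$ (as $\Z/2$-graded ``real'' algebras) reduces $N$ to a $\Z/2$-graded ``real'' $\Cont_0(X)$-linear self-Morita equivalence of $\Cont_0(X)\otimes\Mat_{2^{a+b}}(\C)$, which is Morita equivalent to $\Cont_0(X)$ itself. So it suffices to show: a $\Z/2$-graded ``real'' $\Cont_0(X)$-linear self-Morita equivalence of $\Cont_0(X)$ is, up to isomorphism, tensoring with a $\Z/2$-graded ``real'' complex line bundle $L\prto X$, uniquely determined. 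Here a self-Morita equivalence of $\Cont_0(X)$ is a full $\Z/2$-graded ``real'' Hilbert $\Cont_0(X)$-module $S$ with $\Comp(S)\cong\Cont_0(X)$; as in the discussion after Definition~\ref{def:KR-orientation}, this forces $S$ to be the section module of a complex line bundle $L$ over $X$, equipped with a $\Z/2$-grading and a ``real'' involution, and the functor ``tensor with $L$'' is exactly the associated Morita self-equivalence. Isomorphism of Morita equivalences translates into isomorphism of the $\Z/2$-graded ``real'' line bundles, which gives both existence and uniqueness of $L$.

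I expect the main obstacle to be bookkeeping rather than a genuine difficulty: one must keep careful track of the gradings and ``real'' structures throughout the tensor-product manipulations, check that ``$\Cont_0(X)$ acts the same on left and right'' is preserved under composition and under the reduction to $\Mat_{2^{a+b}}(\C)$-coefficients, and verify that the rank of the full Hilbert module is forced to be $1$ (so that one really gets a line bundle, not a higher-rank bundle). The subtle point is that over $\Cont_0(X)$ the relevant Picard group is $\mathrm{Pic}_{\Z/2\text{-gr},\rinv}(\Cont_0(X))$, and one should argue that the only contribution beyond the $\KR$-dimension is captured by $\Z/2$-graded ``real'' line bundles; this follows because $\Cont_0(X)$ is commutative, so a rank-one full Hilbert module is automatically of the stated form, and there is no room for an extra ``twist'' as in the non-commutative case. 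Once this is pinned down, existence and uniqueness of $L$ are immediate, and functoriality of the tensor product yields that the two $\KR$-orientations differ precisely by $L$.
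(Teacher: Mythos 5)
Your proposal is correct and follows essentially the same route as the paper: compose one orientation with the inverse of the other to obtain a $\Cont_0(X)$-linear ``real'' graded self-Morita equivalence of $\Cont_0(X)\otimes\Cliff_{a,b}$, reduce it to a self-Morita equivalence of $\Cont_0(X)$ using that tensoring with $\Cliff_{b,a}$ lands in a graded matrix algebra, and identify the result with a $\Z/2$-graded ``real'' line bundle. The paper phrases the reduction via the bijectivity of the exterior product with $\Cliff_{a,b}$ on isomorphism classes rather than by explicitly tensoring with $\Cliff_{b,a}$, but the two formulations are interchangeable; your extra opening paragraph handling the case $(a_1,b_1)\neq(a_2,b_2)$ is a reasonable addition that the paper leaves implicit.
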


\begin{proof}
  Let \(M_1\) and~\(M_2\) be two \(\Cont_0(X)\)-linear ``real''
  graded Morita equivalences between \(\Cont_0(X,\Cliff_E)\) and
  \(\Cont_0(X)\otimes \Cliff_{a,b}\).  Then the composite of \(M_1\)
  and the inverse of~\(M_2\) is a \(\Cont_0(X)\)-linear ``real''
  graded Morita self-equivalence of
  \(\Cont_0(X)\otimes \Cliff_{a,b}\).  Taking the exterior product
  with \(\Cliff_{a,b}\) is clearly bijective on isomorphism classes
  of Morita self-equivalences if \(a=b\) because then
  \(\Cliff_{a,b}\) is a matrix algebra.  If \(a\neq b\), then it is
  also bijective on isomorphism classes because when we tensor first
  with \(\Cliff_{a,b}\) and then with \(\Cliff_{b,a}\), we tensor
  with \(\Cliff_{a+b,a+b}\), which is already known to be bijective
  on isomorphism classes.  Therefore, we may replace the
  self-equivalence of \(\Cont_0(X)\otimes \Cliff_{a,b}\) by one of
  \(\Cont_0(X)\).  Such a Morita self-equivalence of \(\Cont_0(X)\)
  is well known to be just a complex line bundle~\(L\).  In our
  case, the line bundle must also carry a ``real'' involution and a
  \(\Z/2\)-grading, of course.  Unravelling the bijections on
  isomorphism classes, we see that~\(M_2\) is isomorphic to
  \(L \hot M_1\).
\end{proof}

\begin{remark}
  \label{rem:orientation_reversal}
  Assume that~\(X\) is connected.  Let \(L\prto X\) be a ``real''
  complex line bundle.  Then there are two ways to define a
  \(\Z/2\)\nb-grading on~\(L\): we may declare all of~\(L\) to have
  parity \(0\) or~\(1\).  When we tensor a Morita equivalence~\(M\)
  with a line bundle of negative parity, we flip the
  \(\Z/2\)-grading on~\(M\).  Clearly, we may always flip the
  \(\Z/2\)-grading on a Morita equivalence and get another Morita
  equivalence.  This operation is called
  \emph{orientation-reversal}.
\end{remark}

\section{Representable \texorpdfstring{$\KR$\nb-}{KR-}theory and \texorpdfstring{$\KR$\nb-}{KR-}theory with supports}
\label{sec:representable}

We are going to construct a geometric bivariant \(\KR\)\nb-theory for spaces
with a ``real'' involution.  We follow
\cites{Emerson-Meyer:Normal_maps, Emerson-Meyer:Correspondences},
where equivariant geometric bivariant \(\K\)\nb-theory is developed for
spaces with a groupoid action.  While it is mentioned there that the
theory also works for KO\nb-theory, the more general \(\KR\)\nb-theory is
not mentioned there explicitly.  Nevertheless, the theory developed
there applies because \(\KR\)\nb-theory comes from a \(\Z/2\)-equivariant
cohomology theory.  This is checked in some detail
in~\cite{Joseph:Master}.  The proof is similar to the proof of the
same result for \(\Z/2\)-equivariant KO-theory, which only differs
in that the group~\(\Z/2\) acts linearly instead of
conjugate-linearly.  Therefore, we do not repeat the proof here.

We clarify, however, how to define the relevant cohomology theory
because we will use this later anyway.  \(\KR\)\nb-theory, like ordinary
\(\K\)\nb-theory, is not a cohomology theory because it is only
functorial for proper continuous maps.  The cohomology theory from
which \(\K\)\nb-theory comes is called \emph{representable}
\(\K\)\nb-theory.
Adapting the approach in~\cite{Emerson-Meyer:Equivariant_K} for
groupoid-equivariant \(\K\)\nb-theory to the ``real'' case, we
define the representable analogue of \(\KR\)\nb-theory as
\begin{equation}
  \label{eq:def_representable_KR}
  \KR^n_X(X) \defeq \KKR_{-n}^X\bigl(\Cont_0(X),\Cont_0(X)\bigr);
\end{equation}
the right hand side means the \(\Cont_0(X)\)-linear ``real'' version
of Kasparov theory, which Kasparov~\cite{Kasparov:Novikov} denotes
by \(\mathcal{R}\KK\).

The following result is shown in~\cite{Joseph:Master} and is what is
needed to apply the machinery developed in
\cites{Emerson-Meyer:Normal_maps, Emerson-Meyer:Correspondences} to
\(\KR\)\nb-theory:

\begin{theorem}[\cite{Joseph:Master}]
  Representable \(\KR\)\nb-theory is a multiplicative
  \(\Z/2\)-equivariant cohomology theory, and \(\KR\)\nb-oriented
  vector bundles are oriented for it.
\end{theorem}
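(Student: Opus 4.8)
The plan is to verify, for the functor $X\mapsto \KR^n_X(X)=\KKR^X_{-n}\bigl(\Cont_0(X),\Cont_0(X)\bigr)$, the axioms of a multiplicative $\Z/2$-equivariant cohomology theory in the sense required by \cites{Emerson-Meyer:Normal_maps, Emerson-Meyer:Correspondences}, by transporting the corresponding structural properties of Kasparov's $\Cont_0(X)$-linear theory $\mathcal{R}\KKR$. Throughout, the only difference from the case of $\Z/2$-equivariant $\KO$-theory is that the group $\Z/2$ acts conjugate-linearly rather than linearly on the Hilbert modules and vector bundles involved, and none of the arguments below is sensitive to this; the detailed bookkeeping is carried out in~\cite{Joseph:Master}, so here I only indicate the shape of the proof.

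First, the Eilenberg--Steenrod-type axioms. A ``real'' continuous map $f\colon X\to Y$ induces a $\Z/2$-equivariant nondegenerate $*$-homomorphism $\Cont_0(Y)\to\Contb(X)=\Mult(\Cont_0(X))$, and $\mathcal{R}\KKR^X$ is contravariantly functorial for such morphisms; this gives functoriality for all ``real'' maps. Homotopy invariance is immediate, since a ``real'' homotopy $X\times[0,1]\to Y$ yields homotopic $*$-homomorphisms and $\KKR$ is homotopy invariant in both variables. For a $\Z/2$-invariant open $U\subseteq X$ with closed complement $Z$, the extension $\Cont_0(U)\into\Cont_0(X)\prto\Cont_0(Z)$ of ``real'' $\Z/2$-graded $\Cst$-algebras produces, via the six-term exact sequence in $\KKR^X$ together with the $\Cont_0(X)$-linear structure, the required long exact sequence relating the representable groups and the $\KR$-theory with supports of the pieces, exactly as in the non-equivariant case. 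Additivity follows because $\KKR$ commutes with countable $\Cont_0$-direct sums, and $\Z/8$-periodicity of the resulting theory is Bott periodicity in $\KKR$. For multiplicativity, the exterior Kasparov product and the canonical identification $\Cont_0(X)\otimes\Cont_0(Y)\cong\Cont_0(X\times Y)$ give a pairing
\[
  \KR^*_X(X)\times\KR^*_Y(Y)\to\KR^*_{X\times Y}(X\times Y);
\]
associativity, graded-commutativity and the unit $[\Id_{\Cont_0(X)}]$ are inherited from the corresponding properties of the Kasparov product, and pullback along the diagonal $X\to X\times X$ equips each $\KR^*_X(X)$ with a natural $\Z/8$-graded ring structure.

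For the orientation statement, let $E\prto X$ be a $\Z/2$-equivariant vector bundle carrying a $\KR$-orientation, i.e.\ a $\Z/2$-graded ``real'' $\Cont_0(X)$-linear Morita equivalence between $\Cont_0(X,\Cliff_E)$ and $\Cont_0(X)\otimes\Cliff_{a,b}$. Composing the induced $\KKR^X$-equivalence with the fibrewise Bott class $[\tilde\beta'_E]\in\KKR^X_0\bigl(\Cont_0(X),\Cont_0(\abs{E})\otimes\Cliff_E\bigr)$ of~\eqref{eq:Thom_with_bundle} yields the Thom isomorphism class $\tau_E\in\KKR^X_{-\dim_\KR E}\bigl(\Cont_0(X),\Cont_0(\abs{E})\bigr)$. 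One must check that $\tau_E$ is invertible in $\mathcal{R}\KKR^X$; granting this, compatibility with direct sums follows from Lemma~\ref{lem:add_KR-orientations} together with $\tilde\beta'_{E_1\oplus E_2}=\tilde\beta'_{E_1}\hot\tilde\beta'_{E_2}$, and normalisation on the trivial rank-zero bundle is formal. This is exactly the assertion that $\KR$-oriented bundles are oriented for representable $\KR$-theory.

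The main obstacle is the invertibility of the fibrewise Bott class in the \emph{representable} theory rather than merely fibrewise. For a trivial bundle $X\times\R^{a,b}\prto X$ this is Kasparov's Bott periodicity, exterior-tensored with $\Cont_0(X)$. For a general $E$ one reduces to the trivial case by local triviality of $E$ over a finite $\Z/2$-invariant open cover and a patching argument: using the Mayer--Vietoris sequence established above, an element of $\mathcal{R}\KKR^X$ that restricts to a $\KKR$-equivalence over each member of the cover is itself invertible. The remaining steps --- confirming that these axioms literally match the input required by \cites{Emerson-Meyer:Normal_maps, Emerson-Meyer:Correspondences}, and the routine verification that conjugate-linearity of the $\Z/2$-action creates no difficulty relative to the $\KO$ case --- are treated in full in~\cite{Joseph:Master}.
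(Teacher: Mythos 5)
The paper itself does not contain a proof of this theorem: it is cited wholesale from \cite{Joseph:Master}, and the surrounding text only indicates that the argument adapts \cite{Emerson-Meyer:Equivariant_K} to the ``real'' case, differing from the $\Z/2$-equivariant $\KO$-theory case solely in that the $\Z/2$-action is conjugate-linear rather than linear. Your sketch is fully consistent with that indicated route --- verifying the Eilenberg--Steenrod-type axioms for $\mathcal{R}\KKR^X$, establishing the ring structure via the exterior Kasparov product and pullback along the diagonal, and deducing the Thom isomorphism from the fibrewise Bott class combined with the orientation Morita equivalence --- and you correctly defer the bookkeeping to the thesis just as the paper does. The one place where you supply detail the paper does not mention is the Mayer--Vietoris patching argument for the invertibility of $[\tilde\beta'_E]$ in $\mathcal{R}\KKR^X$; this is a legitimate route, and the finite $\Z/2$-invariant trivialising cover you invoke does exist because a $\Z/2$-manifold has finite covering dimension. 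Be aware, though, that Kasparov's own proof constructs an explicit fibrewise inverse, which requires no finiteness hypothesis on the cover and is closer to what the paper alludes to at~\eqref{eq:Thom_with_bundle} when it says the class ``is also invertible''; if the statement is ever to be applied beyond $\Z/2$-manifolds (say, to infinite-dimensional or non-paracompact ``real'' spaces), the explicit-inverse route is the safer one.
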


Our notation for representable \(\KR\)\nb-theory alludes to a further
generalisation, namely, the \(\KR\)\nb-theory \(\KR^*_Z(X)\) of a
space~\(X\) with \(Z\)\nb-compact support, given a map
\(b\colon X\to Z\).  By analogy
to~\cite{Emerson-Meyer:Equivariant_K}, we define this by
\begin{equation}
  \label{eq:def_KR_support}
  \KR^n_Z(X) \defeq \KKR_{-n}^Z\bigl(\Cont_0(Z),\Cont_0(X)\bigr).
\end{equation}
If \(Z=\pt\), then \(\KR_Z=\KR\) is the \(\KR\)\nb-theory as defined
above.  Representable \(\KR\)\nb-theory is the special case where
\(Z=X\) and~\(b\) is the identity map.  More generally, if~\(b\) is
proper, then \(\KR_Z^*(X)=\KR_X^*(X)\) is the representable
\(\KR\)\nb-theory of~\(X\).  In particular, if~\(X\) is compact,
then \(\KR_Z^*(X) = \KR^*(X)\) for all \(b\colon X\to Z\).

Let~\(X\) be a \(\Z/2\)-manifold.  Then any ``real'' complex vector
bundle over~\(X\) defines a class in \(\KR^0_X(X)\).  With our
Kasparov theory definition, this is the Hilbert
\(\Cont_0(X)\)-module of sections of~\(X\) with \(\Cont_0(X)\)
acting by pointwise multiplication also on the left.  This only
defines a class in the usual \(\KR^0(X)\) if~\(X\) is compact.
If~\(X\) is compact, we also know that \(\KR^0(X) = \KR^0_X(X)\) is
the Grothendieck group of the monoid of such ``real'' vector
bundles.

For a finite-dimensional CW-complex~\(X\), it is well known that its
representable \(\K\)\nb-theory and \(\KO\)\nb-theory are the
Grothendieck groups of the monoids of complex and real vector
bundles over~\(X\).  The analogous result for
\emph{\(\Z/2\)-equivariant} \(\K\)\nb-theory is false, however, as
shown by the counterexample in
\cite{Lueck-Oliver:Completion}*{Example~3.11}.  This counterexample
for~\(\K^{\Z/2}\), however, does not work like this for
\(\KR\)\nb-theory.  Therefore, it is possible that the representable
\(\KR\)\nb-theory of a finite-dimensional \(\Z/2\)-CW-complex is
always isomorphic to the Grothendieck group of the monoid of
``real'' complex vector bundles over~\(X\).  We have not
investigated this question.

More generally, let us add a \(\Z/2\)-map \(b\colon X\to Z\).
Choose two ``real'' complex vector bundles~\(E_\pm\) over~\(X\)
together with an isomorphism
\(\varphi\colon E_+|_{X\setminus A} \congto E_-|_{X\setminus A}\)
for an open subset \(A\subseteq X\) whose closure is
\(Z\)\nb-compact in the sense that \(b|_{\cl{A}}\colon \cl{A} \to Z\)
is proper.  We may, of course, equip~\(E_\pm\) with inner products.
We may arrange these so that~\(\varphi\) is unitary.  Using the
Tietze Extension Theorem, we may extend~\(\varphi\) to a continuous
section~\(\tilde\varphi\) of norm~\(1\) of the vector bundle
\(\Hom(E_+,E_-)\) on all of~\(X\).  Then we define a cycle for
\(\KKR_0^Z\bigl(\Cont_0(Z),\Cont_0(X)\bigr)\) as follows.  The
Hilbert module consists of the space of sections of
\(E_+ \oplus E_-\) with the \(\Z/2\)-grading induced by this
decomposition and with the ``real'' structure induced by the
``real'' structures on~\(E_\pm\).  A function \(h\in\Cont_0(Z)\)
acts on this by pointwise multiplication with \(h\circ b\).  The
Fredholm operator is pointwise multiplication
with~\(\tilde\varphi\).  This is indeed a cycle for
\(\KKR_0^Z\bigl(\Cont_0(Z),\Cont_0(X)\bigr)\)
because~\(\tilde\varphi\) is unitary outside a \(Z\)\nb-compact
subset.

With the definition in~\eqref{eq:def_KR_support}, it becomes obvious
that a class \(\xi\in\KR^n_Z(X)\) yields an element of
\([\xi]\in \KKR_{-n}\bigl(\Cont_0(Z),\Cont_0(X)\bigr)\), which
induces maps \(\xi_*\colon \KR^a(Z) \to \KR^{a+n}(X)\) for
\(a\in\Z/8\).  This generalises both the pull back functoriality for
continuous proper maps \(b\colon X\to Z\) and the map on
\(\KR^*(X)\) that multiplies with a vector bundle on~\(X\).

\section{Wrong-Way functoriality of \texorpdfstring{$\KR$\nb-}{KR-}theory}
\label{sec:wrong-way}

In the following, we shall specialise some of the theory in
\cites{Emerson-Meyer:Normal_maps, Emerson-Meyer:Correspondences} and
simplify it a bit for our more limited purposes.  First, we take the
groupoid denoted~\(\mathcal{G}\) in
\cites{Emerson-Meyer:Normal_maps, Emerson-Meyer:Correspondences} to
be the group~\(\Z/2\).  This is because a ``real'' structure is the
same as a \(\Z/2\)\nb-action.  So the object space
of~\(\mathcal{G}\), which is denoted~\(Z\) in
\cites{Emerson-Meyer:Normal_maps, Emerson-Meyer:Correspondences}, is
just the one-point space~\(\pt\).  Thus fibre products over~\(Z\)
become ordinary products.

Secondly, we work in the smooth setting, that is, with smooth
manifolds without boundary, and with smooth maps only.  We also
assume smooth manifolds to be finite-dimensional, which is automatic
if they are connected.  We briefly call a finite-dimensional smooth
manifold with a smooth \(\Z/2\)-action a \emph{\(\Z/2\)-manifold}.

\begin{remark}
  Swan's Theorem says that any vector bundle over a paracompact
  space of finite covering dimension is a direct summand in a
  trivial vector bundle.  This basic result may fail for
  groupoid-equivariant \(\K\)\nb-theory, even when the spaces are
  compact and the groupoid is a bundle of Lie groups (see
  \cite{Emerson-Meyer:Normal_maps}*{Example~2.7}).  This creates the
  need to speak of ``subtrivial'' equivariant vector bundles in the
  general setting considered in \cites{Emerson-Meyer:Normal_maps,
    Emerson-Meyer:Correspondences}.  However, for the finite
  group~\(\Z/2\), any \(\Z/2\)-equivariant vector bundle over a
  \(\Z/2\)-manifold is subtrivial by
  \cite{Emerson-Meyer:Normal_maps}*{Theorem~3.11}.  We use this
  occasion to point out that the hypotheses in that theorem are
  wrong: it should be assumed that the space~\(Y\) and not~\(X\) is
  finite-dimensional.  Any smooth finite-dimensional manifold~\(X\)
  with smooth \(\Z/2\)-action has a structure of finite-dimensional
  \(\Z/2\)-CW-complex.  In the following, we may therefore drop the
  adjective ``subtrivial'' as long as we restrict attention to
  \(\Z/2\)-manifolds.
\end{remark}

\begin{remark}
  Since~\(\Z/2\) is a finite group, its regular representation is a
  ``full vector bundle'' over~\(\pt\).  Such a vector bundle is
  needed for several results in \cites{Emerson-Meyer:Normal_maps,
    Emerson-Meyer:Correspondences}, and it comes for free in our
  case.  Any linear representation of~\(\Z/2\) is isomorphic to a
  direct sum of copies of the two characters of~\(\Z/2\).  That is,
  it is isomorphic to~\(\R^{a,b}\) where the generator of~\(\Z/2\)
  acts by the ``real'' involution on that space.
\end{remark}

After these preliminary remarks, we construct wrong-way
functoriality or shriek maps for \(\KR\)\nb-oriented smooth maps.
This is based on factorising smooth maps in a certain way.  The
factorisation is called a \emph{normally nonsingular map} in
\cites{Emerson-Meyer:Normal_maps, Emerson-Meyer:Correspondences}.
Under the extra assumptions that we impose, any smooth map has such
a factorisation and it is unique up to ``smooth equivalence''; this
implies that the shriek map is independent of the factorisation.  So
the factorisation becomes irrelevant in the special case that we
consider here.  This fails already for smooth maps between smooth
manifolds with boundary (see
\cite{Emerson-Meyer:Normal_maps}*{Example~4.7}).  So the theory of
normally nonsingular maps is needed to treat this more general class
of spaces.  Our applications, however, concern only manifolds
without boundary.  Therefore, we will define \(\KR\)\nb-oriented
correspondences only in this special case to simplify the theory.
Nevertheless, we include the basic definition of a normally
nonsingular map to clarify what would be needed to extend the theory
to more general spaces than \(\Z/2\)-manifolds.

\begin{definition}
  \label{def:2.3}
  Let \(X\) and~\(Y\) be \(\Z/2\)-manifolds.  A (smooth) \emph{normally
    nonsingular \(\Z/2\)-map} from \(X\) to~\(Y\) consists of the
  following data:
  \begin{itemize}
  \item \(V\), a \(\Z/2\)-equivariant \(\R\)\nb-vector bundle over~\(X\);
  \item \(E=\R^{a,b}\) for some \(a,b\in\N\), an \(\R\)\nb-linear
    representation of the group~\(\Z/2\), which we treat as a
    \(\Z/2\)-equivariant vector bundle over~\(\pt\);
  \item \(\hat{f}\colon \abs{V} \hookrightarrow Y\times \R^{a,b}\), a
    \(\Z/2\)-equivariant diffeomorphism between the total space
    of~\(V\) and an open subset of
    \(Y\times \R^{a,b} = Y \times_\pt \abs{E}\).
  \end{itemize}
  Here~\(\abs{V}\) is the total space of the vector bundle~\(V\).
  Let \(\zeta_V\colon X\to \abs{V}\) be the zero section of~\(V\)
  and let \(\pi_Y\colon Y\times \R^{a,b} \to Y\) be the coordinate
  projection.  The \emph{trace} of \((V,E,\hat{f})\) is the dotted
  composite map
  \begin{equation}
    \label{eq:nns_maps}
    \begin{tikzcd}
      X \arrow[r, dotted, rightarrow, "f"]
      \arrow[d, rightarrow,"\zeta_V"] &
      Y \\
      \abs{V} \arrow[r, hookrightarrow, "\hat{f}"] &
      Y\times \R^{a,b}. \arrow[u, rightarrow,"\pi_Y"]
    \end{tikzcd}
  \end{equation}
\end{definition}

The following definition describes two ways to change a normally
nonsingular map.  The important feature is that they do not change
the resulting shriek map.

\begin{definition}
  Let \((V,E,\hat{f})\) be a normally nonsingular map and let
  \(E_0\) be another linear representation of~\(\Z/2\).  Then
  \((V\oplus (X\times E_0),E\oplus E_0, \hat{f} \times \Id_{E_0})\)
  is another normally nonsingular map, called a \emph{lifting} of
  \((V,E,\hat{f})\).  A normally nonsingular map from
  \(X\times [0,1]\) to \(Y\times [0,1]\) where the map~\(\hat{f}\)
  is a map over~\([0,1]\) is called an \emph{isotopy} between the
  two normally nonsingular maps that arise by restricting to the end
  points of \([0,1]\).  Two smooth normally nonsingular maps are
  called \emph{smoothly equivalent} if they have liftings that are
  isotopic.
\end{definition}

\begin{proposition}[\cite{Emerson-Meyer:Normal_maps}*{Theorems 3.25
    and~4.36}]
  \label{pro:exmp1}
  Any smooth \(\Z/2\)-map is the trace of a normally nonsingular
  map, which is unique up to smooth equivalence.  Two smooth
  \(\Z/2\)-maps are smoothly homotopic if and only if their lifts to
  normally nonsingular maps are smoothly equivalent.
\end{proposition}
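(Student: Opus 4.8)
The result is \cite{Emerson-Meyer:Normal_maps}*{Theorems~3.25 and~4.36}; since the groupoid is now the finite group~\(\Z/2\) and all spaces are \(\Z/2\)-manifolds, the argument there simplifies, and I sketch the plan. There are three things to establish: existence of a normally nonsingular factorisation of a smooth \(\Z/2\)-map, its uniqueness up to smooth equivalence, and the translation between smooth homotopy of \(\Z/2\)-maps and smooth equivalence of their factorisations. For existence, I would start with a smooth \(\Z/2\)-map \(f\colon X\to Y\). Because every linear representation of~\(\Z/2\) is an~\(\R^{a,b}\) and every \(\Z/2\)-equivariant vector bundle over a \(\Z/2\)-manifold is subtrivial, the equivariant embedding theorem (Mostow--Palais, which applies since~\(\Z/2\) is compact) yields a \(\Z/2\)-equivariant closed embedding \(\iota\colon X\hookrightarrow \R^{a,b}\) for suitable \(a,b\in\N\). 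Then \((f,\iota)\colon X\hookrightarrow Y\times \R^{a,b}\) is a \(\Z/2\)-equivariant closed embedding that projects to~\(f\) under~\(\pi_Y\). Taking~\(V\) to be its equivariant normal bundle, the equivariant tubular neighbourhood theorem produces a \(\Z/2\)-equivariant diffeomorphism~\(\hat f\) from~\(\abs{V}\) onto an open neighbourhood of \((f,\iota)(X)\) in \(Y\times\R^{a,b}\) restricting to \((f,\iota)\) on the zero section~\(\zeta_V\). Then \((V,\R^{a,b},\hat f)\) is a normally nonsingular \(\Z/2\)-map with trace~\(f\).

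For uniqueness, given two normally nonsingular maps \((V,E,\hat f)\) and \((V',E',\hat f')\) with the same trace~\(f\), the plan is to replace each by a lifting so that \(E\) and~\(E'\) become one common, sufficiently large representation~\(\R^{a,b}\) and so that the two resulting embeddings \(X\hookrightarrow Y\times\R^{a,b}\) take values in a representation large enough for the equivariant Whitney embedding and isotopy theorems to apply. These theorems, again valid for the finite group~\(\Z/2\), show that two \(\Z/2\)-equivariant embeddings of~\(X\) into a sufficiently large \(Y\times\R^{a,b}\) that both project to~\(f\) are \(\Z/2\)-equivariantly isotopic through embeddings over~\(Y\); the equivariant isotopy extension theorem upgrades this to an ambient isotopy, and uniqueness of equivariant tubular neighbourhoods matches up the normal-bundle data along it. Packaging this isotopy as a normally nonsingular map from \(X\times[0,1]\) to \(Y\times[0,1]\) that is a map over~\([0,1]\) then exhibits the required smooth equivalence.

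Finally, a smooth \(\Z/2\)-homotopy \(H\colon X\times[0,1]\to Y\) is itself a smooth \(\Z/2\)-map, so by the existence step it is the trace of a normally nonsingular map from \(X\times[0,1]\) to \(Y\times[0,1]\); arranging this factorisation to be a map over~\([0,1]\) and restricting to the two endpoints produces an isotopy between normally nonsingular lifts of~\(f_0\) and~\(f_1\), which are therefore smoothly equivalent. Conversely, the trace of a normally nonsingular map over~\([0,1]\) is a homotopy over~\([0,1]\), and a lifting has the same trace as the original map, so smoothly equivalent normally nonsingular maps have smoothly homotopic traces. The step I expect to be the main obstacle is the uniqueness argument: the delicate part is the stabilisation bookkeeping needed to bring two a priori unrelated factorisations of~\(f\) --- with different normal bundles~\(V\), different representations~\(E\), and different open embeddings~\(\hat f\) --- into a common position where the equivariant embedding and isotopy extension theorems can be applied uniformly over~\(Y\).
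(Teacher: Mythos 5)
Your proposal correctly reconstructs the argument: the existence step (equivariant proper embedding into some \(\R^{a,b}\), form \((f,i)\), take the normal bundle, apply the equivariant tubular neighbourhood theorem) is word-for-word what the paper recalls immediately after the proposition, and your uniqueness and homotopy steps are a sound sketch of the strategy of \cite{Emerson-Meyer:Normal_maps}*{Theorems 3.25 and 4.36}, which the paper cites rather than reproves. One small point: subtriviality of \(\Z/2\)-equivariant vector bundles over \(\Z/2\)-manifolds is not what gives the equivariant embedding of \(X\) into a representation --- that is the Mostow--Palais theorem, which you also invoke --- so the mention of subtriviality there is a harmless conflation rather than a gap.
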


We recall how to lift a smooth \(\Z/2\)-map \(f\colon X\to Y\) to a
normally nonsingular map.  There is a \(\Z/2\)-equivariant, proper
embedding \(i\colon X\to\R^{a,b}\) for some \(a,b\in\N\).  Let
\(E\defeq \R^{a,b}\).  The map
\((f,i)\colon X \to Y\times \R^{a,b}\) is still a
\(\Z/2\)-equivariant, proper embedding.  Let~\(V\) be its normal
bundle.  By the Tubular Neighbourhood Theorem, the total space
of~\(V\) is \(\Z/2\)-equivariantly diffeomorphic to a neighbourhood
of the image of~\((f,i)\).  This gives the open
embedding~\(\hat{f}\) for a normally nonsingular map.

\begin{definition}
  \label{def:KR-orientation_map}
  Let \(f\colon X\to Y\) be a smooth \(\Z/2\)-map.  A \emph{stable
    normal bundle} for~\(f\) is a vector bundle \(V \prto X\) such
  that \(V \oplus T X\) is isomorphic to
  \(f^*(TY) \oplus (X\times \R^{a,b})\) for some \(a,b\in\N\).  A
  \emph{\(\KR\)\nb-orientation} for~\(f\) is a stable normal
  bundle~\(V\) for~\(f\) together with a \(\KR\)\nb-orientation
  of~\(V\).  Its \emph{\(\KR\)\nb-dimension} is the difference of
  the \(\KR\)\nb-dimension of~\(V\) and \(a-b \bmod 8\), the
  \(\KR\)\nb-dimension of the trivial bundle
  \(X\times \R^{a,b} \prto X\).
\end{definition}

\begin{lemma}
  \label{lem:KR-orientation}
  Let \((V,E,\hat{f})\) be a normally nonsingular \(\Z/2\)-map and
  let \(f\colon X\to Y\) be its trace.  Then a \(\KR\)\nb-orientation
  for~\(f\) is equivalent to a \(\KR\)\nb-orientation of the vector
  bundle~\(V\).
\end{lemma}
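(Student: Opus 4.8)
The plan is to identify the two kinds of \(\KR\)-orientation by showing that the \(\R\)-vector bundle~\(V\) built into the normally nonsingular map \((V,E,\hat{f})\) is a stable normal bundle for its trace~\(f\) in the sense of Definition~\ref{def:KR-orientation_map}. The key step is a tangent-bundle computation. Along the zero section \(\zeta_V\colon X\to\abs{V}\) there is a canonical \(\Z/2\)-equivariant splitting \(\zeta_V^*(T\abs{V})\cong TX\oplus V\), the summand~\(V\) being the vertical subbundle of \(T\abs{V}\) restricted to the zero section. On the other side, \(T(Y\times\R^{a,b})\) decomposes as the pullback of~\(TY\) along the coordinate projection \(\pi_Y\) plus a trivial summand with fibre~\(\R^{a,b}\); pulling this back along \(\hat{f}\circ\zeta_V\) and using \(\pi_Y\circ\hat{f}\circ\zeta_V=f\), which is the definition of the trace, we obtain \((\hat{f}\circ\zeta_V)^*T(Y\times\R^{a,b})\cong f^*(TY)\oplus(X\times\R^{a,b})\). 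Since~\(\hat{f}\) is a diffeomorphism onto an open subset, its differential identifies \(\zeta_V^*(T\abs{V})\) with \((\hat{f}\circ\zeta_V)^*T(Y\times\R^{a,b})\), and therefore \(TX\oplus V\cong f^*(TY)\oplus(X\times\R^{a,b})\). Thus~\(V\) is a stable normal bundle for~\(f\) with trivial bundle \(X\times\R^{a,b}\), and a \(\KR\)-orientation of the bundle~\(V\) is, by definition, exactly a \(\KR\)-orientation for~\(f\) that uses~\(V\) as its stable normal bundle.

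It remains to pass a \(\KR\)-orientation for~\(f\) built from some other stable normal bundle~\(W\) to one built from~\(V\). Here I would use that any two stable normal bundles \(V,W\) for~\(f\) become isomorphic after adding trivial \(\Z/2\)-representation bundles. From \(TX\oplus V\cong f^*(TY)\oplus(X\times\R^{a,b})\) and \(TX\oplus W\cong f^*(TY)\oplus(X\times\R^{a',b'})\), adding~\(W\) to the first and~\(V\) to the second shows that \(V\oplus W\oplus TX\) is isomorphic both to \(W\oplus f^*(TY)\oplus(X\times\R^{a,b})\) and to \(V\oplus f^*(TY)\oplus(X\times\R^{a',b'})\). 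Choose a \(\Z/2\)-bundle~\(\nu\) over~\(X\) with \(f^*(TY)\oplus\nu\cong X\times\R^{r,s}\); such a~\(\nu\) exists because~\(Y\) admits a proper \(\Z/2\)-equivariant embedding into a linear representation, so that \(TY\), and hence \(f^*(TY)\), is equivariantly stably trivial. Adding~\(\nu\) to both descriptions yields \(W\oplus(X\times\R^{a+r,b+s})\cong V\oplus(X\times\R^{a'+r,b'+s})\). Now Lemma~\ref{lem:add_KR-orientations} finishes the argument: a \(\KR\)-orientation of~\(W\) and the canonical \(\KR\)-orientation of the trivial bundle \(X\times\R^{a+r,b+s}\) combine to one on \(W\oplus(X\times\R^{a+r,b+s})\); transport it along the isomorphism above; and split off the canonical orientation of \(X\times\R^{a'+r,b'+s}\), again by Lemma~\ref{lem:add_KR-orientations}, to obtain a \(\KR\)-orientation of~\(V\).

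Finally, the two constructions are mutually inverse: feeding a \(\KR\)-orientation of~\(V\) into the second construction with \(W=V\) returns it unchanged, since the bundle isomorphisms can then be taken to be identities and the two uses of Lemma~\ref{lem:add_KR-orientations} cancel. The transfer in the converse direction depends a priori on the auxiliary choices of~\(\nu\) and of the isomorphisms; by Proposition~\ref{pro:KR-orientation_unique}, different choices alter the resulting \(\KR\)-orientation of~\(V\) at most by tensoring with a graded ``real'' line bundle, and — in keeping with the principle that it is only the induced shriek map that must be canonical — this causes no trouble in the sequel. I expect the main obstacle to be precisely this converse direction, and within it the input that stable normal bundles for a fixed \(\Z/2\)-map are unique up to trivial representation summands; the forward direction is just the tangent-bundle identification above, and everything else is bookkeeping with the already-established additivity of \(\KR\)-orientations.
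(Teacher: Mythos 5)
Your proof is correct and follows essentially the same route as the paper, which more tersely asserts that $V$ is a stable normal bundle for~$f$, that any two stable normal bundles of~$f$ are stably isomorphic, and then invokes Lemma~\ref{lem:add_KR-orientations}. Your write-up fills in the tangent-bundle identification along the zero section and the bookkeeping behind the stable isomorphism (via a complement~$\nu$ to $f^*(TY)$), which the paper leaves implicit.
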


\begin{proof}
  The bundle~\(V\) is a stable normal bundle for~\(f\).  Therefore,
  a \(\KR\)\nb-orientation of~\(V\) induces one for~\(f\).  Any two
  stable normal bundles of~\(f\) are stably isomorphic.  Thus
  Lemma~\ref{lem:add_KR-orientations} implies that a
  \(\KR\)\nb-orientation for one stable normal bundle of~\(f\)
  induces \(\KR\)\nb-orientations on all other stable normal bundles
  of~\(f\).
\end{proof}

\begin{definition}
  A \emph{\(\KR\)\nb-orientation} on a \(\Z/2\)-manifold~\(X\) is a
  \(\KR\)\nb-orientation on its tangent vector bundle \(T X\).  The
  \emph{\(\KR\)\nb-dimension} \(\dim_\KR X\) is defined as
  \(\dim_\KR T X\).
\end{definition}

If \(X\) and~\(Y\) are two \(\KR\)\nb-oriented \(\Z/2\)-manifolds, then any
smooth map \(f\colon X\to Y\) inherits a \(\KR\)\nb-orientation by
Lemma~\ref{lem:add_KR-orientations}, and its \(\KR\)\nb-dimension is
\begin{equation}
  \label{eq:KR-dim_for_map}
  \dim_\KR f = \dim_\KR Y - \dim_\KR X.  
\end{equation}

\begin{example}
  \label{exa:KR-oriented_manifolds}
  We are going to describe \(\KR\)\nb-orientations on the
  \(\Z/2\)-manifolds \(\R^{a,b}\), \(\Sphere^{a,b}\), and~\(\T^d\).
  The tangent bundle of~\(\R^{a,b}\) is the trivial bundle with
  fibre~\(\R^{a,b}\).  So its Clifford algebra bundle is already
  isomorphic to the trivial Clifford algebra bundle with fibre
  \(\Cliff_{a,b}\).

  The covering \(\R\to\T\), \(t\mapsto \exp(\ima t)\), becomes a
  ``real'' covering \(\R^{0,1} \to \T\).  This induces an
  isomorphism between the tangent bundle of~\(\T\) and the trivial
  bundle with fibre~\(\R^{0,1}\).  Taking a \(d\)\nb-fold product,
  we get an isomorphism from the tangent bundle of~\(\T^d\) to the
  trivial bundle with fibre~\(\R^{0,d}\).  This induces an
  isomorphism between the Clifford algebra bundle of \(T\T^d\) and
  the trivial bundle \(\T^d\times\Cliff_{0,d}\).

  The outward pointing radial vector field \(\partial/\partial r\)
  on the unit sphere spans the normal bundle of the inclusion map
  \(\Sphere^{a,b} \to \R^{a,b}\).  Since the involution
  on~\(\R^{a,b}\) is linear, it preserves this vector field, that
  is, the normal bundle is \(\Z/2\)-equivariantly isomorphic to the
  trivial bundle with fibre~\(\R^{1,0}\).  So
  \(T \Sphere^{a,b} \oplus (\Sphere^{a,b}\times \R^{1,0}) \cong
  \Sphere^{a,b} \times \R^{a,b}\).  Then
  Lemma~\ref{lem:add_KR-orientations} gives a \(\KR\)\nb-orientation
  on~\(T \Sphere^{a,b}\).  We also find
  \[
    \dim_\KR (\R^{a,b}) = a-b,\qquad
    \dim_\KR (\T^d) = -d,\qquad
    \dim_\KR (\Sphere^{a,b}) = a-b-1,
  \]
  where the three numbers are understood as elements of~\(\Z/8\).
  Notice that the above argument for~\(\Sphere^{a,b}\) also works for
  \(a=0\).
\end{example}

Let \((V,E,\hat{f})\) be a \(\KR\)\nb-oriented, normally nonsingular
\(\Z/2\)-map and let \(f\colon X\to Y\) be its trace.  Then there
are Thom isomorphisms both for the vector bundle \(V \prto X\) and
the trivial vector bundle \(Y\times\R^{a,b}\prto Y\).  The open
inclusion~\(\hat{f}\) identifies \(\Cont_0(\abs{V})\) with an ideal
in \(\Cont_0(Y\times\R^{a,b})\).  Thus each of the solid maps
in~\eqref{eq:nns_maps} induces a map in \(\KR\)\nb-theory.  Taking the
composite gives induced maps
\[
  f_!\colon \KR^n(X) \to \KR^{n+\dim_\KR f}(Y)
  \qquad \text{for }n\in\Z/8.
\]
Since the Thom isomorphisms come from KKR-classes, the map~\(f_!\)
is the Kasparov product with an element in
\(\KKR_{-\dim_\KR f}(\Cont_0(X),\Cont_0(Y))\), which we will also
denote by~\(f_!\).  It turns out that~\(f_!\) is preserved under
lifting and isotopy and is
functorial in the sense that
\[
  f_! \circ g_! = (f\circ g)_!
\]
for two composable \(\Z/2\)-maps \(f,g\)
(see~\cite{Emerson-Meyer:Normal_maps}).  In particular, by
Proposition~\ref{pro:exmp1}, \(f_!\) only depends on the smooth
map~\(f\) and its \(\KR\)\nb-orientation, justifying the notation~\(f_!\).
The map~\(f_!\) is called the \emph{shriek map} of~\(f\), and the
map that sends~\(f\) to~\(f_!\) is the wrong-way functoriality of
\(\KR\)\nb-theory.

\begin{example}
  Let \(E\prto X\) be a \(\KR\)\nb-oriented vector bundle.  Then the zero
  section \(\zeta_E\colon X\hookrightarrow E\) and the bundle
  projection \(\pi_E\colon E\prto X\) are \(\KR\)\nb-oriented in a canonical
  way, such that their shriek maps are the Thom isomorphism and its
  inverse \(\KR^n(X) \leftrightarrow \KR^{n+\dim_\KR E}(E)\).
\end{example}

\begin{example}
  Let~\(X\) be a \(\Z/2\)-manifold.  Let~\(T X\) be its tangent
  bundle.  As a complex manifold, it carries a canonical
  \(\KR\)\nb-orientation of \(\KR\)\nb-dimension~\(0\) (see
  Example~\ref{exa:complex_oriented}).  This induces a
  \(\KR\)\nb-orientation on the constant map \(f\colon T X \to \pt\)
  to the one-point space.  The shriek map
  \(f_!\colon \KR^0(T X) \to \KR^0(\pt) = \Z\) is the Atiyah--Singer
  topological index map on~\(X\).
\end{example}

\begin{remark}
  If~\(f\) is a smooth submersion, then~\(f_!\) has an
  \emph{analytic} variant~\(f_{!,\an}\), given by the class in
  Kasparov theory of the family of Dirac operators along the fibres
  of~\(f\).  This analytic version is equal to the topological one,
  that is, \(f_{!,\an} = f_!\) holds in
  \(\KKR_{-\dim_\KR f}(\Cont_0(X),\Cont_0(Y))\).  The proof is the
  same as for \cite{Emerson-Meyer:Normal_maps}*{Theorem~6.1}, which
  deals with the analogous statement in \(\KK\), that is, when we
  forget the ``real'' structures.  This is equivalent to the
  families version of the Atiyah--Singer index theorem for the
  family of Dirac operators along the fibres of the
  submersion~\(f\).
\end{remark}

We are particularly interested in the shriek maps for an inclusion
\(f\colon \pt \to X\), where~\(\pt\) denotes the one-point space
with the trivial ``real'' structure.  So \(f(\pt)\in X\) must be a
fixed point of the ``real'' involution on~\(X\).  The map~\(f\) is
\(\KR\)\nb-orientable because any vector bundle over a point is
trivial and thus \(\KR\)\nb-orientable.  A \(\KR\)\nb-orientation
for~\(X\) chooses a canonical \(\KR\)\nb-orientation for~\(f\).  Let
us specialise to the case where \(X= \Sphere^{a,b}\) with \(a>1\),
so that \(S\) and~\(N\) are fixed points.  Let
\(S\colon \pt \to \Sphere^{a,b}\) be the inclusion of the north
pole.  Give~\(\Sphere^{a,b}\) the \(\KR\)\nb-orientation from
Example~\ref{exa:KR-oriented_manifolds}.  Then we get a canonical
map
\[
  S_!\colon \Z
  \cong \KR^0(\pt)
  \to \KR^{a-b-1}(\Sphere^{a,b})
  \cong \DK(\Cont(\Sphere^{a,b}) \otimes \Cliff_{a,b}).
\]

\begin{lemma}
  \label{lem:beta_from_point}
  There is \(\chi \in \{\pm1\}\) such that the map~\(S_!\) sends the
  generator \(1\in\Z\) to
  \(\chi\cdot ([\beta_{a,b}] - [\gamma_1])\).
\end{lemma}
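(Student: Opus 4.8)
The plan is to unravel the shriek map~$S_!$ through a normally nonsingular representative, recognise it as a Thom isomorphism followed by an extension-by-zero map, and then invoke the earlier computation of~$[\beta_{a,b}]-[\gamma_1]$ in the stereographic decomposition of $\DK(\Cont(\Sphere^{a,b}) \otimes \Cliff_{a,b})$.

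Concretely, I would factor the inclusion~$S\colon \pt \to \Sphere^{a,b}$ as $S = \iota \circ s$, where $s\colon \pt \hookrightarrow \R^{a-1,b}$ is the inclusion of the origin (a fixed point of the involution) and $\iota\colon \R^{a-1,b}\hookrightarrow \Sphere^{a,b}$ is the inverse stereographic projection from~$N$, an open embedding with image $\Sphere^{a,b}\setminus\{N\}$ that sends the origin to~$S$. Equip $\R^{a-1,b}$ with the standard $\KR$\nb-orientation from Example~\ref{exa:KR-oriented_manifolds}, coming from the trivial Clifford-algebra bundle with fibre~$\Cliff_{a-1,b}$. Then the open embedding~$\iota$ is $\KR$\nb-oriented of $\KR$\nb-dimension~$0$, and its shriek map is the extension-by-zero map, which by the split exact sequence for~$\ev_N$ is the inclusion $\iota_{\R^{a-1,b},*}$ of the first direct summand of $\DK(\Cont(\Sphere^{a,b}) \otimes \Cliff_{a,b})$. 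The map~$s$ is $\KR$\nb-oriented, its stable normal bundle being~$\R^{a-1,b}$, and its shriek map $s_!\colon \KR^0(\pt) \to \KR^{a-b-1}(\R^{a-1,b})$, which we identify with $\DK(\Cont_0(\R^{a-1,b}) \otimes \Cliff_{a,b})$, is the Thom isomorphism for that orientation, so $s_!(1)$ is the Bott periodicity generator, namely Kasparov's class $[\tilde\beta'_{a-1,b}]$. By functoriality of shriek maps, $(\iota \circ s)_! = \iota_! \circ s_!$ for the $\KR$\nb-orientation~$\mathfrak{o}'$ on~$S$ induced by those on~$\iota$ and~$s$; combined with the proposition describing $[\beta_{a,b}]-[\gamma_1]$ in the splitting $\DK(\Cont(\Sphere^{a,b})\otimes\Cliff_{a,b}) \cong \Z \oplus \DK(\Cliff_{a,b})$, this yields \(S_!^{\mathfrak{o}'}(1) = \iota_{\R^{a-1,b},*}\bigl([\tilde\beta'_{a-1,b}]\bigr) = [\beta_{a,b}] - [\gamma_1]\).

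It then remains to compare~$\mathfrak{o}'$ with the $\KR$\nb-orientation~$\mathfrak{o}$ on~$S$ used in the statement of the lemma, which is the one induced from the chosen $\KR$\nb-orientation on~$\Sphere^{a,b}$ via Lemmas~\ref{lem:add_KR-orientations} and~\ref{lem:KR-orientation}. Both~$\mathfrak{o}$ and~$\mathfrak{o}'$ are $\KR$\nb-orientations of the stable normal bundle $T_S\Sphere^{a,b} \cong \R^{a-1,b}$ of~$S$, and they have the same $\KR$\nb-dimension~$a-b-1$; hence by Proposition~\ref{pro:KR-orientation_unique} they differ by tensoring with a $\Z/2$\nb-graded ``real'' complex line bundle over the point, which is just~$\C$ with one of its two gradings. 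Flipping this grading reverses orientation (Remark~\ref{rem:orientation_reversal}) and therefore multiplies the shriek map by~$-1$. So $S_!^{\mathfrak{o}} = \chi \cdot S_!^{\mathfrak{o}'}$ for some $\chi \in \{\pm1\}$, which gives $S_!(1) = \chi\cdot([\beta_{a,b}]-[\gamma_1])$. Any sign ambiguity already present in the choice of the isomorphism $\KKR_0(\C,\Cont_0(\R^{a-1,b})\otimes\Cliff_{a-1,b}) \cong \Z$ is likewise absorbed into~$\chi$, which is why the lemma only asserts the existence of this sign.

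The main obstacle is the orientation bookkeeping in the last paragraph: one has to track precisely which $\KR$\nb-orientation is in play at each stage and check that the only freedom remaining, after matching $\KR$\nb-dimensions, is a grading flip on~$\C$, so that the discrepancy between~$\mathfrak{o}$ and~$\mathfrak{o}'$ collapses to the single sign~$\chi$. A subsidiary point needing care is that the tubular neighbourhood of~$S$ can be chosen to be exactly the stereographic chart $\Sphere^{a,b}\setminus\{N\}$ — because the normal bundle of the point~$S$ is its whole tangent space, which stereographic projection identifies with all of~$\R^{a-1,b}$ — so that the extension-by-zero map genuinely coincides with the $\iota_{\R^{a-1,b},*}$ appearing in the earlier split exact sequence.
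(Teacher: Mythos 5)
Your proposal is correct and takes essentially the same approach as the paper's proof: both realise the stereographic chart $\Sphere^{a,b}\setminus\{N\} \cong \R^{a-1,b}$ as a tubular neighbourhood of the south pole, so that $S_!$ factors as a Thom isomorphism into $\KR^{a-b-1}(\Sphere^{a,b}\setminus\{N\}) \cong \Z$ followed by extension by zero, landing up to sign on the generator $[\beta_{a,b}] - [\gamma_1]$. The paper gets the sign ambiguity directly from the fact that $S_!$ factors through a group isomorphism onto a summand isomorphic to~$\Z$, while you trace it through Proposition~\ref{pro:KR-orientation_unique} and Remark~\ref{rem:orientation_reversal}; the two arguments are equivalent, yours just pinpointing more explicitly where the freedom lives.
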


\begin{proof}
  The stereographic projection at the north pole induces a
  \(\Z/2\)-equivariant diffeomorphism
  \(\Sphere^{a,b} \setminus \{N\} \cong \R^{a-1,b}\).  We could use
  this as a tubular neighbourhood for the inclusion of the south
  pole into~\(\Sphere^{a,b}\).  Therefore, \(S_!\) factors through
  an isomorphism onto the direct summand
  \(\KR^{a-b-1}(\Sphere^{a,b}\setminus \{N\})\cong \Z\).
  Since~\([\beta_{a,b}] - [\gamma_1]\) generates this summand,
  \(S_!\) must send the generator \(1\in\Z\) to
  \(\pm ([\beta_{a,b}] - [\gamma_1])\).
\end{proof}

The sign~\(\chi\) depends on the choices of \(\KR\)\nb-orientations
and signs in boundary maps, and we do not compute it.  It will
appear in several formulas below.

The power of geometric bivariant \(\K\)\nb-theory is that there
often is a simple way to compute composite maps \(b_* \circ f_!\)
for a \(\KR\)\nb-oriented map \(f\colon X\to Y\) and a proper
continuous map \(b\colon Z\to Y\).  We will use this to compute the
pullback of \([\beta_{1,d}]\) to the van Daele \(\K\)\nb-theory of~\(\T^d\)
 using only geometric considerations.  This
circumvents rather messy Chern character computations in the physics
literature (see \cites{Golterman-Jansen-Kaplan:Currents,
  Qi-Hughes-Zhang:TFT_time-reversal}), and it also works in the real
case.  The following proposition makes precise when and how we may
compute \(b_* \circ f_!\):

\begin{proposition}
  \label{pro:reorder_pull-back_shriek}
  Let \(X\) and~\(Y\) be \(\Z/2\)-manifolds.  Let \(f\colon X\to Y\)
  be a \(\KR\)\nb-oriented smooth \(\Z/2\)-map and let
  \(b\colon Z\to Y\) be a proper smooth \(\Z/2\)-map.  Assume that
  \(b\) and~\(f\) are transverse.  Then the fibre product
  \(X\times_Y Z\) is a smooth \(\Z/2\)-manifold.  The projection
  \(\pi_X\colon X\times_Y Z \to X\) is proper, and
  \(\pi_Z\colon X\times_Y Z\to Z\) inherits from~\(f\) a
  \(\KR\)\nb-orientation of the same \(\KR\)\nb-dimension.  The
  following diagram commutes:
  \begin{equation}
    \label{eq:shriek_pullback_diagram}
    \begin{tikzcd}
      \KR^n(X) \ar[r, "f_!"] \ar[d, "\pi_X^*"'] &
      \KR^{n+\dim_\KR f}(Y) \ar[d, "b^*"] \\
      \KR^n(X \times_Y Z) \ar[r, "(\pi_Z)_!"] &
      \KR^{n+\dim_\KR f}(Z).
    \end{tikzcd}
  \end{equation}
\end{proposition}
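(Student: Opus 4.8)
The plan is to reduce the statement to a compatibility between shriek maps and pullbacks that is already part of the machinery of geometric bivariant \(\KR\)\nb-theory, after first establishing that the geometric inputs behave as claimed. So the first step is to treat the transversality hypothesis: since \(f\) and~\(b\) are transverse, the fibre product \(X\times_Y Z\) is a smooth submanifold of \(X\times Z\), hence a \(\Z/2\)-manifold (the \(\Z/2\)-action on the product restricts to it because both \(f\) and~\(b\) are \(\Z/2\)-maps). Properness of~\(\pi_X\) follows from properness of~\(b\): \(\pi_X\) is the pullback of~\(b\) along~\(f\), and proper maps are stable under pullback. For the \(\KR\)\nb-orientation on~\(\pi_Z\), I would use the standard normal-bundle bookkeeping: transversality gives \(\pi_X^*(V)\) as a stable normal bundle for~\(\pi_Z\) when \(V\) is a stable normal bundle for~\(f\) (one checks \(\pi_X^*(V)\oplus T(X\times_Y Z)\cong \pi_Z^*(TZ)\oplus \pi_X^*(X\times\R^{a,b})\) from the transversality exact sequence \(T(X\times_Y Z)\oplus f^*(TY)\cong \pi_X^*(TX)\oplus \pi_Z^*(TZ)\)), and then pull the \(\KR\)\nb-orientation of~\(V\) back along~\(\pi_X\). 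Since pulling back a trivial Clifford bundle keeps it trivial of the same fibre, the \(\KR\)\nb-dimension is unchanged; that is \(\dim_\KR \pi_Z = \dim_\KR f\).

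The heart of the argument is the commutativity of~\eqref{eq:shriek_pullback_diagram}. Here I would work with a normally nonsingular representative \((V,E,\hat f)\) of~\(f\), with \(E=\R^{a,b}\), so that \(f_!\) is the composite \(b_V^* \circ (\hat f)_* \circ (\zeta_V)_!\) where \((\zeta_V)_!\) is the Thom isomorphism for~\(V\), \((\hat f)_*\colon \Cont_0(\abs V)\hookrightarrow \Cont_0(Y\times\R^{a,b})\) is the open-inclusion map, and \(b_V^*\) is pullback along the projection \(Y\times\R^{a,b}\to Y\). Now I build a normally nonsingular representative for~\(\pi_Z\) by pulling this one back: take the vector bundle \(\pi_X^*(V)\) over \(X\times_Y Z\), the same representation \(E=\R^{a,b}\), and the open embedding \(\abs{\pi_X^*(V)}\hookrightarrow Z\times\R^{a,b}\) obtained from~\(\hat f\) by transversality (the total space of \(\pi_X^*(V)\) is a tubular neighbourhood of \(X\times_Y Z\) inside \(Z\times\R^{a,b}\)). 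The key verification is then a three-square diagram chase: transversality guarantees that the open set \(\hat f(\abs V)\subseteq Y\times\R^{a,b}\) pulls back under \(b\times\Id\) to precisely \(\hat{\pi_Z}(\abs{\pi_X^*(V)})\subseteq Z\times\R^{a,b}\), so the open-inclusion square commutes; the Thom-isomorphism square commutes because the Thom class of~\(V\) pulls back to the Thom class of \(\pi_X^*(V)\) (it is built fibrewise from~\(\tilde\beta'\), which is manifestly natural in~\(X\)), using the \(\KR\)\nb-orientation we defined above; and the projection square commutes trivially since \(b^*\) composed with pullback along \(Y\times\R^{a,b}\to Y\) equals pullback along \(Z\times\R^{a,b}\to Z\) composed with \((b\times\Id)^*\). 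Splicing the three squares gives exactly~\eqref{eq:shriek_pullback_diagram}.

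I expect the main obstacle to be the middle square, the naturality of the Thom isomorphism under this non-proper pullback situation — the map \((\hat f)_*\) is an honest nonunital \Star{}homomorphism (extension of functions by zero from an open subset), so one has to be careful that the Thom class in \(\KKR_0(\Cont_0(X),\Cont_0(\abs V)\otimes\Cliff_V)\) and its pullback in \(\KKR_0(\Cont_0(X\times_Y Z),\Cont_0(\abs{\pi_X^*(V)})\otimes\Cliff_{\pi_X^*(V)})\) are intertwined correctly by \(\pi_X^*\) on the left and by the open-inclusion map on the right. This is where the explicit fibrewise description of \([\tilde\beta'_E]\) from~\eqref{eq:Thom_with_bundle} is indispensable: because \(\pi_X^*(V)_{(\pi_X(p),\cdot)} = V_{\pi_X(p)}\) canonically as \(\Z/2\)-representations, the generator \(\tilde\beta'_{V_{\pi_X(p)}}\) literally equals the pulled-back one fibrewise, and the \(\KR\)\nb-orientation of \(\pi_X^*(V)\) was defined precisely as the pullback of that of~\(V\). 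Once this naturality is set up, the rest is a routine, if slightly tedious, unwinding of the definitions, and functoriality of pullback (which gives the outer two squares) does the remaining work.
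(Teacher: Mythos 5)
Your first two paragraphs (smoothness of the fibre product, properness of \(\pi_X\), the transversality isomorphism of tangent bundles, and the induced \(\KR\)\nb-orientation on \(\pi_Z\)) roughly agree with what the paper does — the paper derives the same exact sequence
\(T(X\times_Y Z)\into \pi_X^*(TX)\oplus\pi_Z^*(TZ)\prto\pi^*(TY)\)
(note the middle term should be \(\pi^*(TY)\), not \(f^*(TY)\), since it must live over \(X\times_Y Z\); and the cancellation of \(\pi_X^*(TX)\) requires subtriviality and costs an extra trivial summand \(\R^{a',b'}\)). Where you diverge is the commutativity of~\eqref{eq:shriek_pullback_diagram}: the paper simply invokes \cite{Emerson-Meyer:Correspondences}*{Theorem~2.32} (the intersection product of correspondences agrees with the Kasparov product), whereas you try to prove it from scratch by factoring \(f_!\) through a normally nonsingular representative and splicing three commuting squares.

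That direct route contains a genuine gap in the middle square. You claim that the tubular neighbourhood \(\hat f(\abs V)\subseteq Y\times\R^{a,b}\) ``pulls back under \(b\times\Id\) to precisely \(\hat{\pi_Z}(\abs{\pi_X^*(V)})\)''. This is false in general: the preimage \((b\times\Id)^{-1}(\hat f(\abs V))\) is certainly an open neighbourhood of the embedded copy of \(X\times_Y Z\), but it need not be diffeomorphic to \(\abs{\pi_X^*(V)}\) compatibly with the zero section. A concrete counterexample: take \(X=\pt\), \(Y=\R\), \(f(\pt)=0\), so \(V\cong\R\) and one can take \(\hat f=\Id\colon\R\to\R\) (so \(\hat f(\abs V)=\R\)); let \(Z=\R\) and \(b(z)=z^3-z\). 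Then \(b\) is proper and transverse to \(f\), since \(b'(0)\), \(b'(\pm1)\) are all nonzero, and \(X\times_Y Z=b^{-1}(0)=\{-1,0,1\}\). But \(b^{-1}(\hat f(\abs V))=\R\) is connected, whereas \(\abs{\pi_X^*(V)}\) is a disjoint union of three lines — so no diffeomorphism of the kind you need exists. The problem is that \(\hat f\) is just some diffeomorphism onto an open set; it does not respect the bundle structure away from the zero section, and its image is ``too big'' relative to the pulled-back normal bundle.

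Repairing this requires either shrinking the tubular neighbourhood of \((f,i)(X)\) in \(Y\times\R^{a,b}\) before pulling back (which changes the normally nonsingular representative, forcing you to invoke the invariance of \(f_!\) under lifting and isotopy from Proposition~\ref{pro:exmp1}), or abandoning the attempt to match tubular neighbourhoods exactly and instead choosing an independent tubular neighbourhood for \(X\times_Y Z\hookrightarrow Z\times\R^{a,b}\) and then arguing — nontrivially — that the middle square still commutes up to the equivalence that defines shriek maps. Either way you are essentially re-proving the special case of \cite{Emerson-Meyer:Correspondences}*{Theorem~2.32} that the paper cites, and the argument needs more than the ``routine unwinding'' you describe. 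Your observation that the Thom class \([\tilde\beta'_E]\) is natural fibrewise is sound and is indeed the reason the middle square should commute once the geometric bookkeeping is fixed; the gap is purely in that geometric bookkeeping.
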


\begin{proof}
  This is a special case of
  \cite{Emerson-Meyer:Correspondences}*{Theorem~2.32} about
  composing \(\KR\)\nb-oriented correspondences.  We will discuss a
  more general result later.  Here we only describe the
  \(\KR\)\nb-orientation that the projection~\(\pi_Z\) inherits.
  The transversality assumption implies that there is an exact
  sequence of vector bundles over \(X\times_Y Z\) as follows:
  \[
    T(X\times_Y Z) \into \pi_X^*(T X) \oplus \pi_Z^*(T Z) \prto
    \pi^*(T Y).
  \]
  Since any such extension splits \(\Z/2\)-equivariantly, it follows
  that
  \[
    \pi^*(T Y) \oplus T(X\times_Y Z)
    \cong \pi_X^*(T X) \oplus \pi_Z^*(T Z).
  \]
  Since~\(f\) is \(\KR\)\nb-oriented, there are \(a,b\in\N\) and a
  \(\KR\)\nb-oriented vector bundle \(V\prto X\) such that
  \(f^*(T Y) \oplus (X\times \R^{a,b}) \cong T X \oplus V\).  Then
  \(\pi^*(T Y) \oplus (X\times_Y Z\times \R^{a,b}) \cong \pi_X^*(T
  X) \oplus \pi_X^*(V)\) and hence
  \[
    \pi_X^*(T X) \oplus \pi_Z^*(T Z)
    \oplus (X\times_Y Z\times \R^{a,b})
    \cong
    \pi_X^*(T X) \oplus \pi_X^*(V) \oplus T(X\times_Y Z).
  \]
  Since the \(\Z/2\)-equivariant vector bundle \(\pi_X^*(T X)\) is
  subtrivial, we may add another \(\Z/2\)-equivariant vector bundle
  to arrive at an isomorphism
  \[
    \pi_Z^*(T Z) \oplus (X\times_Y Z\times \R^{a+a',b+b'})
    \cong (X\times_Y Z\times \R^{a',b'}) \oplus \pi_X^*(V)
    \oplus T(X\times_Y Z).
  \]
  Thus \((X\times_Y Z\times \R^{a',b'}) \oplus \pi_X^*(V)\) is a
  stable normal bundle for~\(\pi_Z\).  It inherits a
  \(\KR\)\nb-orientation from the obvious \(\KR\)\nb-orientation on
  the trivial bundle and the given \(\KR\)\nb-orientation of~\(V\)
  by Lemma~\ref{lem:add_KR-orientations}.
\end{proof}

We will later need the special case when~\(f\) is the inclusion of a
point \(y_0\in Y\), so that \(X=\pt\).  A \(\KR\)\nb-orientation of~\(f\) is
the same as a \(\KR\)\nb-orientation on the tangent space~\(T_{y_0} Y\).  In
this case, the coordinate projection~\(\pi_Z\) is a bijection
between \(X\times_Y Z\) and the preimage \(b^{-1}(y_0)\) of~\(y_0\)
in~\(Z\).  Transversality means in this case that the differential
of~\(b\) is surjective in all points of \(b^{-1}(y_0)\).  Then
\(b^{-1}(y_0)\) is a smooth submanifold.  It is also compact
because~\(b\) is proper.  The normal bundle of the inclusion of
\(b^{-1}(y_0)\) into~\(Z\) is canonically isomorphic to the trivial
bundle with fibre~\(T_{y_0} Y\).  This gives the induced
\(\KR\)\nb-orientation of~\(\pi_Z\).

The commuting diagram~\eqref{eq:shriek_pullback_diagram} is best
understood in the setting of geometric bivariant \(\KR\)\nb-theory.  This
theory describes \(\KKR_*(\Cont_0(X),\Cont_0(Y))\) for two
\(\Z/2\)-manifolds \(X\) and~\(Y\) in a geometric way.  Its main
ingredients are
\[
  b_* \in \KKR_0(\Cont_0(Y),\Cont_0(Z)),\qquad
  f_! \in \KKR_{-\dim_\KR(f)}(\Cont_0(X),\Cont_0(Y))
\]
for proper smooth maps \(b\colon Z\to Y\) and \(\KR\)\nb-oriented
smooth maps \(f\colon X\to Y\).  The details are explained in the
next section.

\section{Geometric bivariant \texorpdfstring{$\KR$\nb-}{KR-}theory}
\label{sec:correspondences}

We now define \(\KR\)-oriented correspondences between two
\(\Z/2\)-manifolds \(X\) and~\(Y\) as
in~\cite{Emerson-Meyer:Correspondences}.  These produce a geometric
version of bivariant ``real'' Kasparov theory.  The definition of a
correspondence in~\cite{Emerson-Meyer:Correspondences} differs
slightly from the original definition of correspondences by Connes
and Skandalis in~\cite{Connes-Skandalis:Longitudinal}*{§III} by
allowing maps~\(b\) that fail to be proper.  This greatly simplifies
the proof that the geometric and analytic bivariant
\(\KR\)\nb-theories agree.  We shall use smooth maps, whereas the
definition in~\cite{Emerson-Meyer:Correspondences} uses normally
nonsingular maps.  This makes no difference for \(\Z/2\)-manifolds
because of Proposition~\ref{pro:exmp1}.

\begin{definition}
  \label{def3.12}
  Let \(X\) and~\(Y\) be \(\Z/2\)-manifolds.  A (smooth)
  \emph{\(\KR\)-oriented correspondence} from~\(X\) to~\(Y\) is a
  quadruple \((M,b,f,\xi)\), where
  \begin{itemize}
  \item \(M\) is a \(\Z/2\)-manifold,
  \item \(b\colon  M\to X\) is a smooth \(\Z/2\)\nb-map,
  \item \(f\colon M \to Y\) is a \(\KR\)-oriented smooth
    \(\Z/2\)\nb-map, and
  \item \(\xi\in \KR^a_X(M)\) for some \(a\in\Z/8\); here
    \(X\)\nb-compact support in~\(M\) refers to the map
    \(b\colon M\to X\).
  \end{itemize}
  The \emph{\(\KR\)\nb-dimension} of the correspondence is defined
  as \(a+ \dim_\KR(f)\).  We often depict a correspondence as
  \[
    \begin{tikzcd}[sep=small]
      & (M,\xi) \ar[dl, "b"'] \ar[dr, "f"]\\
      X && Y.
    \end{tikzcd}
  \]
\end{definition}

The letters \(f\) and~\(b\) in the definition stand for ``forwards''
and ``backwards''.

\begin{example}
  \label{exa:corr_proper_map}
  A proper, smooth \(\Z/2\)-map \(b\colon Y \to X\) yields the following
  correspondence from~\(X\) to~\(Y\):
  \[
    \begin{tikzcd}[sep=small]
      & (Y,1) \ar[dl, "b"'] \ar[dr, equal, "\Id_Y"]\\
      X && Y
    \end{tikzcd}
  \]
  Here \(\Id_Y\) is the identity map with its canonical
  \(\KR\)-orientation, which is indeed a unit arrow in the category
  of \(\KR\)-oriented smooth maps, and \(1 \in \KR^0_X(Y)\) comes from
  the trivial ``real'' vector bundle of rank~\(1\).
\end{example}

\begin{example}
  \label{exa:corr_K-oriented_map}
  A \(\KR\)-oriented smooth \(\Z/2\)-map \(f \colon X \to Y\) yields a
  \(\KR\)-oriented correspondence from~\(X\) to~\(Y\):
  \[
    \begin{tikzcd}[sep=small]
      & (X,1) \ar[dl, equal, "\Id_X"'] \ar[dr, "f"]\\
      X && Y
    \end{tikzcd}
  \]
\end{example}

\begin{example}
  \label{exa:corr_K-class}
  Any class~\(\xi\) in the representable \(\KR\)\nb-theory
  \(\KR_X^*(X)\) of~\(X\) yields a correspondence from~\(X\) to
  itself:
  \[
    \begin{tikzcd}[sep=small]
      & (X,\xi) \ar[dl, equal, "\Id_X"'] \ar[dr, equal, "\Id_X"]\\
      X && X
    \end{tikzcd}
  \]
\end{example}

We shall mainly consider correspondences where the map~\(b\) is
proper and~\(\xi\) is the unit element in \(\KR^0_X(X)\) as in
Example~\ref{exa:corr_proper_map}.

A \(\KR\)\nb-oriented correspondence \((M,b,f,\xi)\) induces an
element
\[
  f_! \circ [\xi] \in \KKR_{-a-\dim_\KR(f)}
  \bigl(\Cont_0(X),\Cont_0(Y)\bigr),
\]
which induces maps
\[
  f_! \circ \xi_*\colon \KR^p(X) \to \KR^{p+a+\dim_\KR(f)}(Y)
  \qquad \text{for all }p\in\Z/8.
\]
For instance, the correspondence in
Example~\ref{exa:corr_proper_map} gives the class of the
\Star{}\alb{}homomorphism \(b^*\colon \Cont_0(X) \to \Cont_0(Y)\)
in~\eqref{eq:def_bstar} in
\(\KKR_0\bigl(\Cont_0(X),\Cont_0(Y)\bigr)\); the correspondence in
Example~\ref{exa:corr_K-oriented_map} gives~\(f_!\);
the correspondence in Example~\ref{exa:corr_K-class} gives the image
of~\(\xi\) in \(\KKR_0\bigl(\Cont_0(X),\Cont_0(X)\bigr)\) under the
forgetful map.

The \emph{geometric bivariant \(\KR\)\nb-theory} \(\widehat{\KR}{}^*(X,Y)\) is
defined as the set of equivalence classes of \(\KR\)\nb-oriented
correspondences, where ``equivalence'' means the equivalence
relation generated by \emph{bordism} (see
\cite{Emerson-Meyer:Correspondences}*{Definition~2.7}) and
\emph{Thom modification} (see
\cite{Emerson-Meyer:Correspondences}*{Definition~2.8}), which
replaces~\(M\) in a correspondence by the total space of a
\(\KR\)-oriented vector bundle over it.  We shall not use the
precise form of these relations below and therefore do not repeat
them here.  It is shown in~\cite{Emerson-Meyer:Correspondences} that
the disjoint union of correspondences makes \(\widehat{\KR}{}^*(X,Y)\)
an Abelian group.

\begin{theorem}
  Let \(n\in\Z/8\) and let \(X\) and~\(Y\) be \(\Z/2\)-manifolds.
  The map above from \(\KR\)-oriented correspondences to Kasparov
  theory defines an isomorphism
  \[
    \widehat{\KR}{}^n(X,Y)
    \cong \KKR_{-n}\bigl(\Cont_0(X),\Cont_0(Y)\bigr).
  \]
\end{theorem}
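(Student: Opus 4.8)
The plan is to reduce this to the corresponding statement for groupoid-equivariant geometric bivariant \(\K\)\nb-theory established in \cite{Emerson-Meyer:Correspondences}, specialised to the group~\(\Z/2\) acting on the one-point space and with the ``real'' (conjugate-linear) flavour of \(\K\)\nb-theory in place of the linear one. The proof in \cite{Emerson-Meyer:Correspondences} uses only a handful of formal inputs: that representable equivariant \(\K\)\nb-theory is a multiplicative equivariant cohomology theory for which oriented vector bundles are oriented; that there are ``enough'' equivariant vector bundles, that is, every equivariant vector bundle over the spaces under consideration is subtrivial; and that the object space carries a full equivariant vector bundle. For \(\KR\)\nb-theory the first point is exactly the theorem of \cite{Joseph:Master} recalled in Section~\ref{sec:representable}, and the other two hold automatically because \(\Z/2\) is a finite group (see the Remarks in Section~\ref{sec:wrong-way}): every \(\Z/2\)-equivariant vector bundle over a \(\Z/2\)-manifold is subtrivial, and the regular representation is a full vector bundle over~\(\pt\). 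So the argument of \cite{Emerson-Meyer:Correspondences} goes through verbatim, and I would organise the write-up around its three steps.

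First I would check that the comparison map \(\Phi\colon \widehat{\KR}{}^n(X,Y) \to \KKR_{-n}\bigl(\Cont_0(X),\Cont_0(Y)\bigr)\), which sends a correspondence \((M,b,f,\xi)\) to the Kasparov product \(f_!\circ[\xi]\), is a well-defined group homomorphism. Additivity is immediate, since a disjoint union of correspondences corresponds to the direct-sum decomposition \(\Cont_0(M_0\sqcup M_1)=\Cont_0(M_0)\oplus\Cont_0(M_1)\). Invariance under bordism uses homotopy invariance of representable \(\KR\)\nb-theory together with the isotopy invariance and functoriality of the shriek maps from Section~\ref{sec:wrong-way}; restricting a bordism to its two ends then gives the same \(\KKR\)\nb-class. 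Invariance under Thom modification uses that the Thom class of a \(\KR\)-oriented vector bundle \(\abs{V}\to M\) is a \(\KKR\)-equivalence and that the zero section and the bundle projection are mutually inverse shriek maps, so that twisting \(\xi\) by the Thom class and pulling \(f\) back along the bundle projection leaves \(f_!\circ[\xi]\) unchanged.

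Surjectivity of~\(\Phi\) rests on the description in \cite{Emerson-Meyer:Correspondences} of \(\KKR_*\bigl(\Cont_0(X),\Cont_0(Y)\bigr)\) as being generated, under Kasparov composition, by pull-backs along proper smooth maps, shriek maps of open embeddings, and Thom isomorphisms -- each of which is realised by one of the elementary correspondences of Examples~\ref{exa:corr_proper_map}--\ref{exa:corr_K-class}. The only ``real'' input here is that these generation statements rely on Kasparov's ``real'' Bott periodicity, recalled in Section~\ref{sec:4}, and on the existence of a \(\Z/2\)-equivariant proper embedding of a \(\Z/2\)-manifold into some representation~\(\R^{a,b}\); given such an embedding with \(\Z/2\)-invariant open tubular neighbourhood, an arbitrary \(\KKR\)\nb-class is moved, via Bott periodicity and the open inclusion, to one supported near~\(X\), and hence to a correspondence whose middle manifold is the corresponding normal bundle.

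The main obstacle is injectivity: given a correspondence \((M,b,f,\xi)\) with \(\Phi[(M,b,f,\xi)]=0\), one must connect it to the empty correspondence through bordisms and Thom modifications. Following \cite{Emerson-Meyer:Correspondences}, I would first normalise the correspondence by a Thom modification so that \(f\) becomes an open embedding into some \(Y\times\R^{a,b}\) and \(b\) becomes a submersion, translate the vanishing of the \(\KKR\)\nb-class into the vanishing of an associated family index, and then use that a family of index zero becomes, after stabilising by degenerate cycles, operator-homotopic to a degenerate family -- which is precisely what a bordism of correspondences records. This is the step that uses homotopy invariance, the long exact sequences and Bott periodicity of representable \(\KR\)\nb-theory, and the subtriviality of \(\Z/2\)-equivariant vector bundles all together; since each of these has been verified for \(\KR\)\nb-theory above, the argument of \cite{Emerson-Meyer:Correspondences} carries over unchanged and no genuinely new difficulty arises in the ``real'' case.
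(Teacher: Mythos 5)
Your opening paragraph correctly identifies the formal inputs needed to invoke the machinery of \cite{Emerson-Meyer:Correspondences}: representable \(\KR\)\nb-theory is a multiplicative \(\Z/2\)\nb-equivariant cohomology theory for which \(\KR\)\nb-oriented bundles are oriented, every \(\Z/2\)\nb-equivariant vector bundle over a \(\Z/2\)\nb-manifold is subtrivial, and there is a full vector bundle over the point (the regular representation). That part matches the paper. Your treatment of well-definedness (additivity, bordism invariance, Thom-modification invariance) is also in line with the first step of the paper's proof.

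Where you diverge is in the core of the argument. You propose to prove surjectivity and injectivity of the comparison map directly: surjectivity from a claimed ``generation'' of \(\KKR_*\bigl(\Cont_0(X),\Cont_0(Y)\bigr)\) by pullbacks, open-inclusion shrieks and Thom isomorphisms under Kasparov composition, and injectivity from a family-index-zero-implies-bordant-to-degenerate argument. Neither of these is what \cite{Emerson-Meyer:Correspondences} (or the paper) actually does in the bivariant case. The generation statement you invoke is not established there as a self-standing result, and the direct injectivity argument — showing that vanishing of the Kasparov class forces a chain of bordisms and Thom modifications to the empty correspondence — is precisely the hard step that the duality trick exists to sidestep. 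The actual proof proceeds differently: one first proves the \emph{one-variable} comparison \(\widehat{\KR}{}^n(\pt,Y)\cong\KR^n_Z(Y)\) (the analogue of \cite{Emerson-Meyer:Correspondences}*{Theorem~2.25}, where a direct normalisation/index argument is feasible), and then reduces the bivariant case to this using Poincaré duality via symmetric duals: every \(\Z/2\)\nb-manifold \(X\) has a symmetric dual \(P\to X\) giving compatible isomorphisms \(\widehat{\KR}{}^n(X,Y)\cong\KR^{n+a}_X(Y\times P)\) and \(\KKR_{-n}(\Cont_0(X),\Cont_0(Y))\cong\KR^{n+a}_X(Y\times P)\). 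This duality step is the essential mechanism; your proposal omits it and instead sketches a direct bivariant proof that would require substantial new work beyond what the cited sources provide.

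So the proposal is not a faithful reconstruction of either the paper's or \cite{Emerson-Meyer:Correspondences}'s argument. If you want to carry this out, the missing piece is: establish the one-variable isomorphism, then establish symmetric duality for \(\Z/2\)\nb-manifolds in the \(\KR\) setting (this is where subtriviality and the full vector bundle over \(\pt\) are actually used), and finally transport the one-variable isomorphism across the dualities. The direct surjectivity/injectivity route you outline would need independent proofs of the generation claim and of the ``index zero implies bordant to degenerate'' claim at the bivariant level, neither of which is available off the shelf.
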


\begin{proof}
  This is shown by following the proof
  in~\cite{Emerson-Meyer:Correspondences} for equivariant KK-theory.
  The same arguments as in the proof of
  \cite{Emerson-Meyer:Correspondences}*{Theorem~4.2} show that the
  map is well defined and a functor for the composition of
  \(\KR\)-oriented correspondences defined
  in~\cite{Emerson-Meyer:Correspondences}.  Next,
  \cite{Emerson-Meyer:Correspondences}*{Theorem~2.25} for the
  \(\Z/2\)-equivariant cohomology theory \(\KR\) shows that the map
  in the theorem is bijective if \(X=\pt\).  The bivariant case is
  reduced to this easy case using Poincaré duality.  Any
  \(\Z/2\)-manifold~\(X\) admits a ``symmetric dual'' in \(\KR\)\nb-theory
  because of \cite{Emerson-Meyer:Correspondences}*{Theorem~3.17}.
  This is another \(\Z/2\)-manifold~\(P\) with a smooth \(\Z/2\)-map
  \(P\to X\) such that there are duality isomorphisms
  \(\widehat{\KR}{}^n(X,Y) \cong \KR_X^{n+a}(Y \times P)\) in
  geometric bivariant \(\KR\)\nb-theory.  As in
  \cite{Emerson-Meyer:Correspondences}*{Theorem~4.2}, the geometric
  bivariant \(\KR\)-theory classes that give this duality also give
  an isomorphism
  \(\KKR_{-n}\bigl(\Cont_0(X),\Cont_0(Y)\bigr) \cong \KR_X^{n+a}(Y
  \times P)\).
\end{proof}

Thus \(\KR\)\nb-oriented correspondences provide a purely geometric
way to describe Kasparov cycles between the \(\Cst\)\nb-algebras of
functions on \(\Z/2\)-manifolds.  An important feature is that the
Kasparov product may be computed geometrically under an extra
transversality assumption:

\begin{theorem}
  \label{the:compose_corr}
  Let two composable smooth \(\KR\)-oriented correspondences be
  given, as described by the solid arrows in the following diagram:
  \[
    \begin{tikzcd}[sep=small]
      &&(M_1 \times_Y M_2,\xi) \ar[dl, dotted, "\pi_1"]
      \ar[dr, dotted, "\pi_2"']
      \ar[ddll, dotted, bend right, "b"']
      \ar[ddrr, dotted, bend left, "f"]
      \\
      & (M_1,\xi_1)  \ar[dl, "b_1"]  \ar[dr, "f_1"'] &&
      (M_2,\xi_2)  \ar[dl, "b_2"]  \ar[dr, "f_2"'] \\
      X&&Y&&Z
    \end{tikzcd}
  \]
  Assume that the smooth maps \(f_1\) and~\(b_2\) are transverse,
  that is, if \(m_1 \in M_1\), \(m_2 \in M_2\) satisfy
  \(y \defeq f_1(m_1) = b_2(m_2)\), then
  \(D f_1(T_{m_1} M_1) + D b_2(T_{m_2} M_2) = T_y Y\).
  Then the following hold.  First, \(M_1 \times_Y M_2\) is a smooth
  \(\Z/2\)-manifold.  Secondly, the exterior tensor product
  \(\xi \defeq \pi_1^*(\xi_1) \otimes_Y \pi_2^*(\xi_2)\) has
  \(X\)\nb-compact support with respect to
  \(b \defeq b_1 \circ \pi_1\), that is, it belongs to
  \(\KR^*_X(M_1 \times_Y M_2)\).  Thirdly, the composite map
  \(f \defeq f_2 \circ \pi_2\) inherits a \(\KR\)\nb-orientation
  from \(f_1\) and~\(f_2\), whose \(\KR\)-dimension is
  \(\dim_\KR f = \dim_\KR f_1 + \dim_\KR f_2\).  Finally, the
  resulting \(\KR\)-oriented correspondence
  \((M_1\times_Y M_2,b,f,\xi)\) is the composite of the two given
  \(\KR\)-oriented correspondences, that is, its image in Kasparov
  theory is the Kasparov product of the Kasparov theory images of
  \((M_1,b_1,f_1,\xi_1)\) and \((M_2,b_2,f_2,\xi_2)\).
\end{theorem}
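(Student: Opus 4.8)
The plan is to deduce this from \cite{Emerson-Meyer:Correspondences}*{Theorem~2.32}, which is precisely this statement for correspondences between spaces with an action of a locally compact groupoid~\(\mathcal{G}\), specialised to the case where \(\mathcal{G}\) is the group~\(\Z/2\) acting by a ``real'' involution rather than fibrewise linearly.  So the only thing to do is to check that the proof in that reference uses nothing beyond properties of \(\Z/2\)\nb-equivariant \(\K\)\nb-theory that remain valid for \(\KR\)\nb-theory: homotopy invariance and exactness; Bott periodicity and the Thom isomorphism for \(\KR\)\nb-oriented vector bundles; the module structure of representable \(\KR\)\nb-theory over shriek maps and pull-backs; functoriality \((g\circ h)_! = g_!\circ h_!\) of shriek maps; and multiplicativity.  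All of these are packaged in the statement that representable \(\KR\)\nb-theory is a multiplicative \(\Z/2\)\nb-equivariant cohomology theory for which \(\KR\)\nb-oriented vector bundles are oriented, together with the agreement of geometric and analytic bivariant \(\KR\)\nb-theory established above.  Since the argument in \cite{Emerson-Meyer:Correspondences} never uses that the \(\Z/2\)\nb-action on a vector bundle is fibrewise linear rather than fibrewise conjugate-linear, it carries over verbatim; below we only indicate the main points.

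First, the manifold-theoretic content.  Because \(f_1\) and~\(b_2\) are \(\Z/2\)\nb-equivariant and transverse, the fibre product \(M_1\times_Y M_2\) is a \(\Z/2\)\nb-invariant smooth submanifold of \(M_1\times M_2\), hence a \(\Z/2\)\nb-manifold, and transversality yields a short exact sequence of \(\Z/2\)\nb-equivariant vector bundles \(T(M_1\times_Y M_2) \into \pi_1^*(T M_1)\oplus\pi_2^*(T M_2)\prto \pi^*(T Y)\).  Every such extension over a \(\Z/2\)\nb-manifold splits, so \(\pi^*(T Y)\oplus T(M_1\times_Y M_2)\cong \pi_1^*(T M_1)\oplus\pi_2^*(T M_2)\).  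Feeding in stable normal bundles \(V_1\prto M_1\) and \(V_2\prto M_2\) that carry the given \(\KR\)\nb-orientations of \(f_1\) and~\(f_2\) and applying Lemma~\ref{lem:add_KR-orientations} twice, exactly as in the proof of Proposition~\ref{pro:reorder_pull-back_shriek}, produces a stable normal bundle for \(f=f_2\circ\pi_2\) together with a \(\KR\)\nb-orientation; in particular \(\pi_2\) inherits a \(\KR\)\nb-orientation of \(\KR\)\nb-dimension \(\dim_\KR f_1\), and additivity of \(\KR\)\nb-dimensions gives \(\dim_\KR f=\dim_\KR f_1+\dim_\KR f_2\).  For the support condition, suppose \(\xi_i\) is supported on a closed set \(K_i\subseteq M_i\) with \(b_1|_{K_1}\) and \(b_2|_{K_2}\) proper.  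Then \(\xi=\pi_1^*(\xi_1)\otimes_Y\pi_2^*(\xi_2)\) is supported on \(\pi_1^{-1}(K_1)\cap\pi_2^{-1}(K_2)\); for a compact \(C\subseteq X\) the set \(C_1\defeq K_1\cap b_1^{-1}(C)\) is compact, hence so are \(f_1(C_1)\) and \(K_2\cap b_2^{-1}(f_1(C_1))\), and the part of the support lying over~\(C\) is closed in the compact set \(C_1\times\bigl(K_2\cap b_2^{-1}(f_1(C_1))\bigr)\), so \(\xi\in\KR^*_X(M_1\times_Y M_2)\).

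The remaining and principal point is that the correspondence \((M_1\times_Y M_2,b,f,\xi)\) represents the composite of the two given correspondences in the sense of \cite{Emerson-Meyer:Correspondences}; since the passage to Kasparov theory is a functor for that composition by the previous theorem, its image is then the Kasparov product \(\bigl((f_2)_!\circ[\xi_2]\bigr)\circ\bigl((f_1)_!\circ[\xi_1]\bigr)\).  Writing \(f_!=(f_2)_!\circ(\pi_2)_!\), the claim reduces to identifying \((\pi_2)_!\circ[\xi]\) with \([\xi_2]\circ(f_1)_!\circ[\xi_1]\) in \(\KKR\), which in turn follows from the multiplicativity of the \(\KR\)\nb-module structure together with the base-change identity \(b_2^*\circ(f_1)_!=(\pi_2)_!\circ\pi_1^*\) for the transverse square (cf.\ Proposition~\ref{pro:reorder_pull-back_shriek}).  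In \cite{Emerson-Meyer:Correspondences} this identity is itself obtained within the proof of Theorem~2.32 by reducing, via a symmetric Poincaré dual of~\(Y\), to the case \(X=\pt\), where it becomes a bookkeeping statement about Thom isomorphisms in the multiplicative cohomology theory~\(\KR\).  We expect this reduction---constructing the duality classes, tracking the \(\KR\)\nb-orientations through them, and keeping track of the signs in the boundary maps---to be the main obstacle; but it is carried out in full in \cite{Emerson-Meyer:Correspondences}, and the ``real'' case requires no new idea, only the observation that the \(\KR\)\nb-oriented duality and Thom classes it uses exist, which is exactly the cohomology-theoretic input recalled in the first paragraph.  Hence the theorem follows as in \cite{Emerson-Meyer:Correspondences}.
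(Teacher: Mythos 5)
Your proof takes essentially the same approach as the paper: both reduce the statement to \cite{Emerson-Meyer:Correspondences}*{Theorem~2.32} after observing that the machinery there only uses the formal properties of an equivariant multiplicative cohomology theory, which hold for \(\KR\) just as well as for \(\K^{\Z/2}\). The paper's proof is terse — it notes that Emerson--Meyer first define composition for ``special'' correspondences with a submersive forward map (their Theorem 2.24), and then cites their Theorem~2.32 together with Example~2.31 (which translates differential-geometric transversality into the transversality notion for normally nonsingular maps). You fill in more of the manifold-theoretic content explicitly — the fibre product smooth structure, the \(\KR\)-orientation of \(f\) via Lemma~\ref{lem:add_KR-orientations}, and the support estimate — which the paper leaves implicit, and that is valid and worthwhile. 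One small inaccuracy: your description of Emerson--Meyer's internal argument for Theorem~2.32 (``reducing via a symmetric Poincaré dual of~\(Y\) to the case \(X=\pt\)'') conflates it with the argument used to establish the isomorphism between geometric and analytic bivariant theory; the reduction actually used inside the proof of Theorem~2.32 is to special correspondences with submersive forward map. Since you flag this speculation as such and it does not affect the logic of the deduction, the overall argument still stands.
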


\begin{proof}
  In~\cite{Emerson-Meyer:Correspondences}, the composition of
  correspondences is first defined in a special case.  Any
  \(\KR\)-oriented correspondence is equivalent to one where the
  forward map is the restriction of the coordinate projection
  \(Y\times \R^{a,b} \prto Y\) to an open subset
  \(M\subseteq Y\times \R^{a,b}\) (see
  \cite{Emerson-Meyer:Correspondences}*{Theorem~2.24}).  Then~\(f\)
  is a submersion and hence transverse to any smooth map.  The
  composition of ``special'' correspondences gives geometric
  bivariant \(\KR\)-theory a category structure, and the canonical
  map to Kasparov theory is a functor.  With this preparation, the
  claim in our theorem mostly follows from
  \cite{Emerson-Meyer:Correspondences}*{Theorem~2.32 and
    Example~2.31}.  That theorem says that the ``intersection
  product'', which is described in the statement of the theorem, is
  equivalent to the composite in geometric bivariant
  \(\KR\)\nb-theory.  The example
  in~\cite{Emerson-Meyer:Correspondences} says that the usual
  transversality notion from differential geometry implies the
  transversality assumption that is assumed in the theorem
  in~\cite{Emerson-Meyer:Correspondences} (which makes sense for
  correspondences with a normally nonsingular forward map).
\end{proof}

Proposition~\ref{pro:reorder_pull-back_shriek} is the special case
of Theorem~\ref{the:compose_corr} where \(b_1\) and~\(f_2\) are
identity maps, \(b_2\) is proper, and \(\xi_1\) and~\(\xi_2\) are
the units in representable \(\KR\)\nb-theory.

\begin{remark}
  \label{rem:exterior}
  The \emph{exterior product} of two \(\KR\)-oriented
  correspondences is defined by simply taking the product of all
  spaces and maps and the exterior product of the \(\KR\)-theory
  classes involved.  This defines a symmetric monoidal structure on
  geometric bivariant \(\KR\)\nb-theory by
  \cite{Emerson-Meyer:Correspondences}*{Theorem~2.27}.  It is easy
  to check that it lifts the exterior product on Kasparov theory
  (compare \cite{Emerson-Meyer:Correspondences}*{Theorem~4.2}).
\end{remark}

\section{Model Hamiltonians for the strong topological phase}
\label{sec:Hamiltonians}

In this section we compute the topological phase of certain
translation-invariant Hamiltonians that have been considered in the
mathematical physics literature.  They are given by the formula
\[
  H_m \defeq \frac{1}{2\ima} \sum_{j=1}^d (S_j - S_j^*) \otimes \gamma_j
  + \Bigl(m + \frac{1}{2} \sum_{j=1}^d (S_j + S_j^*)\Bigr)
  \otimes \gamma_0
  \in\Cst(\Z^d)\otimes\Cliff_{1,d}
\]
for \(m\in\R\) (see
\cite{Prodan-Schulz-Baldes:Bulk_boundary}*{§2.2.4 and §2.3.3}).
Here \(S_i \in \Cst(\Z^d)\) is the unitary for the element
\(e_i\in\Z^d\).  Throughout this section, we start numbering
Clifford generators for \(\Cliff_{1,d}\) at~\(0\) and not at~\(1\)
as before.  We give \(\Cst(\Z^d)\) the trivial \(\Z/2\)-grading and
the ``real'' involution where the generators~\(S_i\) are real.  Then
the element~\(H_m\) is selfadjoint, odd, and real because
\(\gamma_0, \ima \gamma_1,\dotsc,\ima \gamma_d\) are real by our
conventions.  The Fourier transform identifies the commutative
\(\Cst\)\nb-algebra \(\Cst(\Z^d)\) with the \(\Cst\)\nb-algebra of
continuous functions on the torus~\(\T^d\).  Our ``real'' structure
on \(\Cst(\Z^d)\) transforms under this isomorphism to the ``real''
structure on~\(\T^d\) in~\eqref{eq:Torus_d_detail}.

Define
\(\tilde\beta_{1,d}\colon \R^{1,d} \to \Cliff_{1,d}\) as
in~\eqref{eq:tildebeta} and define
\[
  \tilde{\varphi}_m\colon  \T^d \to \R^{1,d}, \qquad
  (x,y) \mapsto (x_1 + \cdots + x_d +m, y_1, \ldots, y_d).
\]
This map is ``real''.  Since \(S_i\) and~\(S_i^*\) have the Fourier
transforms \(x_i \pm \ima y_i\), the Fourier transform of~\(H_m\) in
\(\Cont(\T^d,\Cliff_{1,d})\) is equal to
\(\tilde\beta_{1,d} \circ \tilde\varphi_m\).

\begin{lemma}
  \label{lem:phi_avoids_zero}
  If \(m \notin \{-d,-d+2,\ldots,d-2,d\}\), then
  \(\tilde\varphi_m(\T^d) \subseteq \R^{1,d} \setminus 0\).
\end{lemma}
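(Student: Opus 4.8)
The plan is to prove the contrapositive: if \(0 \in \tilde\varphi_m(\T^d)\), then \(m \in \{-d,-d+2,\dotsc,d-2,d\}\). So suppose \((x_1,\dotsc,x_d,y_1,\dotsc,y_d) \in \T^d\) satisfies \(\tilde\varphi_m(x,y) = 0\). The last \(d\) coordinates of \(\tilde\varphi_m(x,y)\) are \(y_1,\dotsc,y_d\), so they must all vanish. Since \(x_j^2 + y_j^2 = 1\) for each \(j\) by the description of~\(\T^d\) in~\eqref{eq:Torus_d_detail}, this forces \(x_j = \pm1\) for all \(j\). The first coordinate of \(\tilde\varphi_m(x,y)\) then gives \(x_1 + \dotsb + x_d + m = 0\).

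To finish, I would observe that a sum of \(d\) terms each equal to \(\pm1\) is an integer lying in \([-d,d]\) and congruent to~\(d\) modulo~\(2\): if exactly \(k\) of the terms equal~\(-1\), the sum is \(d - 2k\), and \(k\) ranges over \(\{0,1,\dotsc,d\}\). Hence \(x_1 + \dotsb + x_d\) ranges precisely over \(\{-d,-d+2,\dotsc,d-2,d\}\), and therefore \(m = -(x_1 + \dotsb + x_d)\) lies in this set, which is symmetric under negation. This is the whole argument; there is no genuine obstacle, since once the vanishing of the \(y\)-coordinates pins each \(x_j\) to \(\pm1\) the conclusion is immediate. (One may equally phrase this directly rather than by contraposition, by checking that \(m \notin \{-d,\dotsc,d\}\) rules out \(x_1+\dotsb+x_d+m=0\) whenever the \(y_j\) vanish, and that if some \(y_j \neq 0\) then \(\tilde\varphi_m(x,y) \neq 0\) trivially.)
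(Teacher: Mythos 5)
Your proof is correct and follows the same route as the paper: show that $\tilde\varphi_m(x,y)=0$ forces all $y_j=0$, hence $x_j=\pm1$, hence $m=-\sum x_j\in\{-d,-d+2,\dotsc,d\}$. You simply spell out the final step (that a sum of $d$ signs lies in this set) in a bit more detail than the paper does.
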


\begin{proof}
  Assume \(\tilde{\varphi}_m(x,y) = 0\).  Then
  \(x_1+\dotsc+x_d + m =0\) and \(y_1 = y_2 = \cdots = y_d =0\).
  The latter forces \(x_i = \pm 1\) for \(i = 1,\ldots,d\), and then
  \(m = - \sum_{i=1}^d x_i\) must belong to
  \(\{-d,-d+2,\ldots,d-2,d\}\).
\end{proof}

From now on, we assume \(m \notin \{-d,-d+2,\ldots,d+2,d\}\).  By
Lemma~\ref{lem:phi_avoids_zero}, this is equivalent to~\(H_m\) being
invertible, which is needed to define its topological phase.

Under our assumption on~\(m\), there is a well defined ``real''
function
\[
  \varphi_m\colon \T^d \to \Sphere^{1,d}, \qquad
  \varphi_m(z) \defeq  \frac{1}{\norm{ \tilde{\varphi}_m(z)}} \cdot
  \tilde{\varphi}_m(z)
\]
as in~\eqref{eq:varphi_m}.  Since
\(\tilde\beta_{1,d}(x)^2 = \norm{x}^2\), the ``spectral flattening''
of~\(H_m\) with spectrum \(\{\pm1\}\) is the operator whose Fourier
transform in \(\Cont(\T^d,\Cliff_{1,d})\) is
\(\tilde\beta_{1,d} \circ \varphi_m\).  Since~\(\varphi_m\) takes
values in~\(\Sphere^{1,d}\) by construction, this is just
\(\beta_{1,d} \circ \varphi_m\) with the real, odd selfadjoint
unitary
\(\beta_{1,d} \in \Cont(\Sphere^{1,d}) \otimes \Cliff_{1,d}\)
in~\eqref{eq:beta}.  The composite
\(\beta_{1,d} \circ \varphi_m \in \mathcal{FU}(\Cont(\T^d) \otimes
\Cliff_{1,d})\) is its pullback along~\(\varphi_m\).

To get a class in van Daele's \(\K\)-theory, we must consider a
formal difference \([\beta_{1,d} \circ \varphi_m] - [f]\) for some
\(f\in \mathcal{FU}(\Cont(\T^d) \otimes \Cliff_{1,d})\).
Physically, \(f\) describes the topological phase that we choose to
call ``trivial''.  An obvious choice in our case is \(f=\gamma_0\),
the constant function on~\(\T^d\) with value~\(\gamma_0\).  Another
obvious choice would be~\(-\gamma_0\).  In the complex case, these
two are homotopic.  In the ``real'' case, however, these two choices
turn out to have different classes in
\(\KR\)\nb-theory for \(d \le 2\).
So the sign choice here actually matters.

\begin{lemma}
  \label{lem:Hm_as_composite}
  Up to the sign~\(\chi\) from
  Lemma~\textup{\ref{lem:beta_from_point}}, the class
  \([\beta_{1,d} \circ \varphi_m] - [\gamma_0]\) in
  \(\DK(\Cont(\T^d) \otimes \Cliff_{1,d})) \cong \KR^{-d}(\T^d)\) is
  the composite of the \(\KR\)\nb-oriented correspondences
  \[
    S_! \defeq \left(
      \begin{tikzcd}[sep=small]
        & (\pt,1) \ar[dl, equal] \ar[dr, "S"]\\
        \pt && \Sphere^{1,d}
      \end{tikzcd}
    \right),\qquad
    \varphi_m^* \defeq
    \left(
      \begin{tikzcd}[sep=small]
        & (\T^d,1) \ar[dl, "\varphi_m"'] \ar[dr, equal]\\
        \Sphere^{1,d} && \T^d
      \end{tikzcd}
    \right).
  \]
\end{lemma}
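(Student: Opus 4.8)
The plan is to identify the two correspondences $S_!$ and $\varphi_m^*$ with the elementary correspondences introduced in Section~\ref{sec:correspondences}, to compute the $\KKR$-class of their composite by functoriality, and then to read off its image in $\DK(\Cont(\T^d)\otimes\Cliff_{1,d})$. First I would check that $\varphi_m$ is a smooth, proper ``real'' map: the auxiliary map $\tilde\varphi_m$ is the restriction to~$\T^d$ of an affine map $\R^{2d}\to\R^{1,d}$, hence smooth, and by Lemma~\ref{lem:phi_avoids_zero} its norm is nowhere zero on~$\T^d$, so $\varphi_m=\tilde\varphi_m/\norm{\tilde\varphi_m}$ is a smooth $\Z/2$-map between compact manifolds, in particular proper. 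Therefore the correspondence~$\varphi_m^*$ is precisely the one attached to the proper smooth $\Z/2$-map~$\varphi_m$ as in Example~\ref{exa:corr_proper_map}; by the discussion following that example, its class in $\KKR_0\bigl(\Cont(\Sphere^{1,d}),\Cont(\T^d)\bigr)$ is the class of the pullback \Star{}homomorphism $\varphi_m^*\colon\Cont(\Sphere^{1,d})\to\Cont(\T^d)$ of~\eqref{eq:def_bstar}. Likewise, the correspondence~$S_!$ is the one attached to the $\KR$-oriented inclusion $S\colon\pt\hookrightarrow\Sphere^{1,d}$ as in Example~\ref{exa:corr_K-oriented_map}, using the $\KR$-orientation of~$\Sphere^{1,d}$ from Example~\ref{exa:KR-oriented_manifolds}; it has $\KR$-dimension $\dim_\KR S=\dim_\KR\Sphere^{1,d}=-d$, and its class in Kasparov theory is the class of the shriek map $S_!\colon\KR^0(\pt)\to\KR^{-d}(\Sphere^{1,d})$ considered in Lemma~\ref{lem:beta_from_point}.

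Next I would pass to the composite. The composite correspondence $\varphi_m^*\circ S_!$ is a correspondence from~$\pt$ to~$\T^d$ of $\KR$-dimension $(-d)+0=-d$, so its class lies in $\KKR_d\bigl(\C,\Cont(\T^d)\bigr)\cong\KR^{-d}(\T^d)\cong\DK(\Cont(\T^d)\otimes\Cliff_{1,d})$. Since the identification of geometric bivariant $\KR$-theory with Kasparov theory is functorial for the composition of correspondences, the $\KKR$-class of $\varphi_m^*\circ S_!$ is the Kasparov product of the classes of~$S_!$ and~$\varphi_m^*$; equivalently, since Kasparov product with the unit is the identity, it is the image of the generator $1\in\KR^0(\pt)=\KKR_0(\C,\C)$ under the composite homomorphism $\varphi_m^*\circ S_!\colon\KR^0(\pt)\to\KR^{-d}(\T^d)$. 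Thus it remains to compute $(\varphi_m^*\circ S_!)(1)$.

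Finally I would evaluate. Lemma~\ref{lem:beta_from_point}, applied with $a=1$, $b=d$ and with the Clifford generators of~$\Cliff_{1,d}$ numbered $\gamma_0,\dotsc,\gamma_d$ as in Section~\ref{sec:Hamiltonians}, so that the first generator~$\gamma_1$ in that lemma becomes~$\gamma_0$ here, gives $S_!(1)=\chi\cdot\bigl([\beta_{1,d}]-[\gamma_0]\bigr)$ in $\DK(\Cont(\Sphere^{1,d})\otimes\Cliff_{1,d})$. The pullback map $\varphi_m^*$ on this group is induced by the \Star{}homomorphism $\varphi_m^*\otimes\Id\colon\Cont(\Sphere^{1,d})\otimes\Cliff_{1,d}\to\Cont(\T^d)\otimes\Cliff_{1,d}$; it sends $[\beta_{1,d}]$ to $[\beta_{1,d}\circ\varphi_m]$ and fixes~$[\gamma_0]$, since the pullback of the constant function with value~$\gamma_0$ is again that constant function. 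Hence $(\varphi_m^*\circ S_!)(1)=\chi\cdot\bigl([\beta_{1,d}\circ\varphi_m]-[\gamma_0]\bigr)$, which is the claimed identity.

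I do not expect a genuine obstacle here; the only points requiring attention are the degree bookkeeping, so that the composite correspondence indeed lands in $\DK(\Cont(\T^d)\otimes\Cliff_{1,d})\cong\KR^{-d}(\T^d)$, and the consistent handling of the shift in the numbering of the Clifford generators between Lemma~\ref{lem:beta_from_point} and Section~\ref{sec:Hamiltonians}. The substantive inputs — the evaluation of~$S_!$ on the generator in Lemma~\ref{lem:beta_from_point} and the functoriality of the passage from $\KR$-oriented correspondences to Kasparov products established in Section~\ref{sec:correspondences} — are already available.
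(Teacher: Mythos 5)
Your argument is the same as the paper's: use Lemma~\ref{lem:beta_from_point} (with the Clifford generators renumbered to start at~$0$, so~$\gamma_1$ becomes~$\gamma_0$) to evaluate $S_!$ on the generator, then apply the pullback~$\varphi_m^*$, which fixes the constant~$\gamma_0$ and sends $\beta_{1,d}$ to $\beta_{1,d}\circ\varphi_m$. Your writeup just spells out more of the routine bookkeeping (properness of~$\varphi_m$, $\KR$-dimension, functoriality of the correspondence-to-Kasparov map) that the paper takes for granted.
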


\begin{proof}
  The correspondence~\(S_!\) represents the class
  \(\chi\cdot ([\beta_{1,d}] - [\gamma_0])\) by
  Lemma~\ref{lem:beta_from_point}.  Here~\(\gamma_0\) denotes the
  constant function on~\(\Sphere^{1,d}\) with value~\(\gamma_0\), and
  we changed our numbering of Clifford generators to start at~\(0\).
  Composing with the correspondence denoted~\(\varphi_m^*\) pulls
  this back along the map~\(\varphi_m\).  This gives
  \(\chi\cdot ([\beta_{1,d} \circ \varphi_m] - [\gamma_0])\), where
  now~\(\gamma_0\) denotes the constant function on~\(\T^d\) with
  value~\(\gamma_0\).
\end{proof}

\begin{lemma}
  \label{lem:transverse_example}
  The two correspondences in
  Lemma~\textup{\ref{lem:Hm_as_composite}} are transverse.  So their
  composite is their intersection product.  The fibre product
  \(\T^d \times_{\Sphere^{1,d}} \pt\) in the intersection product is
  diffeomorphic to the finite subset
  \(\varphi_m^{-1}(S) \subseteq \T^d\).  For
  \(z\in\varphi_m^{-1}(S)\), let \(\sign(z)\) be the number
  of~\(-1\) among the coordinates of~\(z\).  Then
  \[
    [\beta_{1,d} \circ \varphi_m] - [\gamma_0]
    = \chi \sum_{z\in \varphi_m^{-1}(S)} (-1)^{\sign(z)} z_!
  \]
  holds in
  \(\DK(\Cont(\T^d)\otimes \Cliff_{1,d}) \cong \KR^{-d}(\T^d)\).
\end{lemma}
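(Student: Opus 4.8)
The plan is to evaluate the composite of the two correspondences in Lemma~\ref{lem:Hm_as_composite} geometrically via Theorem~\ref{the:compose_corr}, and then match the result with Lemma~\ref{lem:Hm_as_composite}. First I would make the preimage explicit. The condition $\varphi_m(z)=S$ forces $\tilde\varphi_m(z)$ to be a positive multiple of~$-e_0$, hence $y_1=\dots=y_d=0$, so each $x_j=\epsilon_j\in\{\pm1\}$ and $\epsilon_1+\dots+\epsilon_d+m<0$; in particular $\varphi_m^{-1}(S)$ is finite and $\sign(z)$ counts the $\epsilon_j$ equal to~$-1$. At such a point $T_z\T^d$ is spanned by $\partial/\partial y_1,\dots,\partial/\partial y_d$, and a short computation with the normalisation map gives $D_z\varphi_m(\partial/\partial y_j)=\mu^{-1}e_j$ with $\mu=-(\epsilon_1+\dots+\epsilon_d+m)>0$, where $e_1,\dots,e_d$ span $T_S\Sphere^{1,d}=e_0^{\perp}$. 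Thus $D_z\varphi_m$ is an isomorphism onto $T_S\Sphere^{1,d}$, and since $T\pt=0$ this is exactly the transversality of $S$ and~$\varphi_m$.

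Then I would invoke Theorem~\ref{the:compose_corr}: the composite of $S_!$ and~$\varphi_m^*$ is their intersection product, whose underlying space is the fibre product, identified by the projection with the finite set $\varphi_m^{-1}(S)\subseteq\T^d$, with backward map to~$\pt$, forward map the inclusion $\varphi_m^{-1}(S)\hookrightarrow\T^d$ carrying the $\KR$-orientation inherited from the construction, and $\KR$-class the unit. Writing it as a disjoint union over the points~$z$, it breaks into the correspondences $(\{z\},*,z\colon\pt\to\T^d,1)$, so the composite equals $\sum_{z}\epsilon_z\,z_!$, where $z_!$ uses the reference $\KR$-orientation of~$\T^d$ from Example~\ref{exa:KR-oriented_manifolds} and $\epsilon_z\in\{\pm1\}$ records the difference between that and the inherited one; the two agree or differ by an orientation reversal by Proposition~\ref{pro:KR-orientation_unique} and Remark~\ref{rem:orientation_reversal}.

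The substance is to show $\epsilon_z=(-1)^{\sign(z)}$. Both $\KR$-orientations in question live on $T_z\T^d\cong\R^{0,d}$. The reference one trivialises $T\T^d$ through the ``real'' coverings $\R^{0,1}\to\T$ in the $d$ factors; at the point $(\epsilon_j,0)$ of the $j$-th circle this trivialisation carries the positive unit vector of $\R^{0,1}$ to $\epsilon_j\,\partial/\partial y_j$. The inherited one is transported from $T_S\Sphere^{1,d}$ along~$D_z\varphi_m$, and the $\KR$-orientation of $\Sphere^{1,d}$ at~$S$, which by Example~\ref{exa:KR-oriented_manifolds} and Lemma~\ref{lem:add_KR-orientations} comes from $T_S\Sphere^{1,d}\oplus\R^{1,0}_{\mathrm{rad}}\cong\R^{1,d}$ together with the canonical isomorphism $\Cliff_{0,d}\hot\Cliff_{1,0}\cong\Cliff_{1,d}$, is just the standard $\KR$-orientation of $\R^{0,d}$ under the identification of $e_j$ with the $j$-th Clifford generator, up to a sign not depending on~$z$ which we absorb into~$\chi$. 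Comparing, the two identifications $T_z\T^d\cong\R^{0,d}$ differ by $\mathrm{diag}(\epsilon_1,\dots,\epsilon_d)$, a reflection in exactly $\sign(z)$ coordinates. Finally I would check that a reflection in one coordinate of~$\R^{0,1}$ reverses the $\KR$-orientation: it induces the automorphism $\gamma\mapsto-\gamma$ of $\Cliff_{0,1}$, which is not inner since $\Cliff_{0,1}$ is commutative, so the associated graded Morita self-equivalence is the nontrivial one, i.e.\ the orientation reversal. Hence $\epsilon_z=(-1)^{\sign(z)}$, and combining with Lemma~\ref{lem:Hm_as_composite} gives
\[
  [\beta_{1,d}\circ\varphi_m]-[\gamma_0]=\chi\sum_{z\in\varphi_m^{-1}(S)}(-1)^{\sign(z)}\,z_!
\]
in $\DK(\Cont(\T^d)\otimes\Cliff_{1,d})\cong\KR^{-d}(\T^d)$.

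The main obstacle is the orientation bookkeeping in the third paragraph: identifying precisely which $\KR$-orientation the point inclusions inherit from the intersection-product construction, pinning down the sign of a coordinate reflection, and making sure the remaining ambiguities are genuinely $z$-independent and can be swept into the already-undetermined sign~$\chi$.
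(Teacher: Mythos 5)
Your proposal is correct and follows essentially the same route as the paper's proof: transversality at the preimage points by the same tangent-space computation, reduction via Theorem~\ref{the:compose_corr} to a sum of point correspondences indexed by $\varphi_m^{-1}(S)$, and the sign $(-1)^{\sign(z)}$ from comparing the reference trivialisation of $T\T^d$ (via the covering $\R^{0,1}\to\T$, which at $-1$ reverses the $y$-direction) with the orientation inherited through $D_z\varphi_m$. You are somewhat more explicit than the paper in justifying that a single coordinate reflection is an orientation reversal, which the paper leaves implicit.
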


\begin{proof}
  We must show that the differential of~\(\varphi_m\) is a
  surjective map onto \(T_S \Sphere^{1,d}\) at all
  points~\((x,y)\in\T^d\) with \(\varphi_m(x,y) = S\).  The tangent
  space \(T_S \Sphere^{1,d}\) is the subspace \(\{0\} \times \R^d\),
  spanned by the basis vectors \(e_1,\dotsc,e_d\).  If
  \(\varphi_m(x,y) = S\), then \(y_1=y_2=\dotsb=y_d=0\) follows as
  in the proof of Lemma~\ref{lem:phi_avoids_zero}.  At these points,
  the tangent space of~\(\T^d\) is spanned by the vectors in the
  directions \(y_1,\dotsc,y_d\).  The differential
  of~\(\tilde\varphi_m\) maps these to the vectors
  \(e_1,\dotsc,e_d\).  On the preimage of~\(S\), the differential of
  the radial projection map
  \(\R^{1,d} \setminus \{0\} \to \Sphere^{1,d}\),
  \(z\mapsto z/\norm{z}\), just multiplies~\(e_j\) for
  \(j=1,\dotsc,d\) with a positive constant, so that the images
  still span \(T_S \Sphere^{1,d}\).  Thus
  \(\varphi_m\colon \T^d \to \Sphere^{1,d}\) is transverse to
  \(S\colon \pt \to \Sphere^{1,d}\).

  The canonical map
  \(\pi_{\T^d}\colon \T^d \times_{\Sphere^{1,d}} \pt \to \T^d\) is a
  diffeomorphism onto the closed submanifold
  \(\varphi_m^{-1}(S) \subseteq \T^d\) by the definition of the
  fibre product.  Since the differential of~\(\varphi_m\) is
  bijective at all points in the preimage of~\(S\), this preimage
  is discrete.  Since~\(\T^d\) is compact, it must be finite.  In
  fact, we may compute it easily: it consists of all points
  \((x,0) = (x_1,\dotsc,x_d,0,\dotsc,0) \in\T^d\) with
  \(\sum_{j=1}^d x_j + m <0\).  Here \((x,0)\in\T^d\) if and only if
  \(x_j \in \{\pm1\}\) for \(j=1,\dotsc,d\).

  Let \(z=(x,y)\in \T^d\) satisfy \(\rinv(z) = z\).  Then \(y=0\)
  and hence \(x_i \in \{\pm1\}\) for \(i=1,\dotsc,d\).  These points
  satisfy \(\varphi_m(z) \in \{N,S\}\) for the north and south pole
  in~\eqref{eq:poles}, simply because~\(\varphi_m\) is ``real'' and
  \(N,S\) are the only points fixed by the involution
  on~\(\Sphere^{1,d}\).  Give~\(\T^d\) the \(\KR\)\nb-orientation
  described in Example~\ref{exa:KR-oriented_manifolds}.  This
  induces a \(\KR\)\nb-orientation on the fibre \(T_z \T^d\).  Since
  the vector field generated by
  the exponential function points upwards at \((1,0) \in \T^1\) and
  downwards at \((-1,0)\in\T^1\), the projection to the
  \(y\)\nb-coordinate \(T_{(1,0)} \T^1 \to \R^{0,1}\) preserves the
  orientation at~\(+1\) and reverses it at~\(-1\).  Therefore, the
  projection to the \(y\)\nb-coordinate \(T_z \T^d \to \R^{0,d}\)
  multiplies the orientation with the sign~\((-1)^{\sign(z)}\).
  The sum in geometric bivariant \(\KR\)-theory is the disjoint
  union of correspondences.  Therefore, the discrete set
  \(\varphi_m^{-1}(S)\) in the composite correspondence contributes
  the sum of \((-1)^{\sign(z)} z_!\) over all
  \(z\in \varphi_m^{-1}(S)\).  Lemma~\ref{lem:Hm_as_composite}
  identifies this sum with
  \(\chi\cdot \bigl([\beta_{1,d} \circ \varphi_m] -
  [\gamma_0]\bigr)\).
\end{proof}

If \(d-2<m<d\), then \(\varphi_m^{-1}(S)\) has only one element,
namely, the single point \((-1,-1,\dotsc,-1,0,\dotsc,0) \in \T^d\)
with sign~\((-1)^d\).  Thus we get 
\begin{equation}
  \label{eq:m_highest_interval}
  [\beta_{1,d} \circ \varphi_m] - [\gamma_0]
  = \chi\cdot (-1)^d \cdot(-1,\dotsc,-1,0,\dotsc,0)_!.
\end{equation}
Up to a sign, this is the generator of \(\KR^{-d}(\T^d)\) that is
mapped to a generator of the \(\KR\)\nb-theory of the Roe
\(\Cst\)\nb-algebra of~\(\Z^d\)
(see~\cite{Ewert-Meyer:Coarse_insulators}).  It is argued
in~\cite{Ewert-Meyer:Coarse_insulators} why this generator describes
strong topological phases.  For other values of~\(m\), we would like
to simplify the formula in Lemma~\ref{lem:transverse_example}
further by comparing the point inclusions \(z_!\colon \pt \to \T^d\)
for different \(z\in\T^d\) with \(\rinv(z)=z\).  If we work in
complex \(\K\)\nb-theory, then all point inclusions are homotopic
and therefore give equivalent correspondences.  This fails, however,
in the ``real'' case.  We need some preparation to explain this.

\begin{lemma}
  \label{lem:Hm_zero_outside_range}
  If \(m < -d\), then~\(H_m\) is homotopic to~\(-\gamma_0\) in
  \(\mathcal{FU}(\Cont(\T^d) \otimes \Cliff_{1,d})\).
\end{lemma}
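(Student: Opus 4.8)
The plan is to realise the homotopy by a straight-line deformation inside~\(\R^{1,d}\). After Fourier transform we work in \(\Cont(\T^d)\otimes\Cliff_{1,d}\), where \(H_m=\tilde\beta_{1,d}\circ\tilde\varphi_m\) and where the constant function~\(-\gamma_0\) equals \(\tilde\beta_{1,d}\circ S\) for the constant map \(S\colon\T^d\to\R^{1,d}\) with value \(S=(-1,0,\dots,0)\), using \(\tilde\beta_{1,d}(S)=-\gamma_0\). For \(t\in[0,1]\) I set \(\psi_t\defeq(1-t)\tilde\varphi_m+tS\) and \(H_{m,t}\defeq\tilde\beta_{1,d}\circ\psi_t=(1-t)H_m+t(-\gamma_0)\), which is selfadjoint, odd and real because \(H_m\) and~\(\gamma_0\) are.

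The key point is that the hypothesis \(m<-d\) forces the first coordinate of \(\psi_t(x,y)\), namely \((1-t)(x_1+\dots+x_d+m)+t(-1)\), to be strictly negative for every \((x,y)\in\T^d\) and every~\(t\): it is a convex combination of the two negative numbers \(x_1+\dots+x_d+m\le d+m<0\) and~\(-1\). Geometrically, the deformation~\(\psi_t\) keeps \(\tilde\varphi_m(\T^d)\) inside the open half-space \(\{z_0<0\}\subseteq\R^{1,d}\setminus\{0\}\) and contracts it to~\(S\). In particular \(\norm{\psi_t}\) never vanishes, so by \(\tilde\beta_{1,d}(w)^2=\norm{w}^2\) the selfadjoint element~\(H_{m,t}\) satisfies \(H_{m,t}^2=\norm{\psi_t}^2\), which is invertible in \(\Cont(\T^d)\); by compactness of \(\T^d\times[0,1]\) the spectra of the \(H_{m,t}^2\) lie in a common compact subinterval of \((0,\infty)\).

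I would then flatten: set \(U_t\defeq H_{m,t}(H_{m,t}^2)^{-1/2}\) by continuous functional calculus. Then \(U_t=U_t^*\) and \(U_t^2=1\); it is odd because \(H_{m,t}\) is odd while \(H_{m,t}^2\), hence \((H_{m,t}^2)^{-1/2}\), is even; and it is real because \(\rinv\) is a conjugate-linear \(*\)-homomorphism fixing \(H_{m,t}\). Thus \(U_t\in\mathcal{FU}(\Cont(\T^d)\otimes\Cliff_{1,d})\), and \(t\mapsto U_t\) is norm-continuous since \(x\mapsto x^{-1/2}\) is continuous on the common spectral interval. At \(t=1\) we have \(H_{m,1}=-\gamma_0\) with \((-\gamma_0)^2=1\), so \(U_1=-\gamma_0\); at \(t=0\) we get \(U_0=\tilde\beta_{1,d}\circ(\tilde\varphi_m/\norm{\tilde\varphi_m})=\beta_{1,d}\circ\varphi_m\), the spectral flattening of~\(H_m\) that represents its topological phase. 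Hence \(U_\bullet\) is the required path in~\(\mathcal{FU}\). I do not expect a genuine obstacle here; the only care needed is to check that the linear homotopy stays within the class of ``real'', odd, selfadjoint elements and that the functional calculus depends continuously on~\(t\), both of which follow from the uniform positivity of the~\(H_{m,t}^2\).
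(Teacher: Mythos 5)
Your proof is correct and uses essentially the same idea as the paper: deform~\(H_m\) through invertible, real, odd, selfadjoint elements and flatten by functional calculus; the uniform lower bound on \(\norm{\psi_t}\) coming from compactness ensures the flattened path is continuous. The paper's own homotopy differs only in the choice of path: rather than the affine interpolation \(\psi_t=(1-t)\tilde\varphi_m+tS\), it lets the mass parameter \(s\) run from~\(m\) to~\(-\infty\), which translates \(\tilde\varphi_m\) along~\(-e_0\) and, after flattening, tends to~\(-\gamma_0\); your choice avoids taking a limit at~\(-\infty\) and makes the uniform invertibility argument slightly cleaner.
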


\begin{proof}
  The Hamiltonians~\(H_s\) for \(s\in (-\infty, m]\) give a
  homotopy of real, odd, selfadjoint unitaries \(\abs{H_s}^{-1} H_s\)
  between \(\abs{H_m}^{-1}H_m\) and
  \[
    \lim_{s\to-\infty} \abs{H_s}^{-1} H_s = -\gamma_0.\qedhere
  \]
\end{proof}  

The same argument shows that~\(H_m\) is homotopic to~\(\gamma_0\)
for \(m > d\).  This is consistent with
Lemma~\ref{lem:transverse_example}, which says that
\([H_m] - [\gamma_0] = 0\) in \(\KR^{-d}(\T^d)\) for \(m>d\).

Combining Lemmas \ref{lem:transverse_example}
and~\ref{lem:Hm_zero_outside_range} gives
\[
  [-\gamma_0] - [\gamma_0]
  = \chi \sum_{z\in \T^d,\ \rinv(z) = z} (-1)^{\sign(z)} z_!
\]
because if \(m<-d\), then~\(\varphi_m\) maps all points in~\(\T^d\)
with \(\rinv(z) = z\) to~\(S\).  In particular, for \(d=1\), this
says that
\begin{equation}
  \label{eq:points_in_T1}
  (1,0)_! =
  (-1,0)_! + \chi\cdot \bigl([-\gamma_0] - [\gamma_0]\bigr).
\end{equation}
The difference \([-\gamma_0] - [\gamma_0]\) is represented by
constant functions and thus is in the image of
\(\DK(\Cliff_{1,1}) \cong \KO^{-1}(\pt) \cong \Z/2\) in
\(\KR^{-1}(\T^1)\).  Since this group is \(2\)\nb-torsion, we may
drop the sign~\(\chi\) in~\eqref{eq:points_in_T1}.

\begin{lemma}
  The isomorphism above maps \([-\gamma_0] - [\gamma_0]\) to the
  nontrivial element \(\mu\in \KO^{-1}(\pt) \cong \Z/2\).
\end{lemma}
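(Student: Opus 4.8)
The plan is to compute $\DK(\Cliff_{1,1})$ by hand from the definition — this is quicker than tracing $[-\gamma_0]-[\gamma_0]$ through Roe's isomorphism — and to read off its class from the combinatorics of the spaces $\mathcal{FU}_n(\Cliff_{1,1})$. The key point will be that these spaces are just the orthogonal groups $O(n)$.

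First I would parametrise $\mathcal{FU}_n(\Cliff_{1,1})$. Writing a general odd element of $\Mat_n(\Cliff_{1,1})$ as $a=A\gamma_0+\ima B\gamma_1$ with $A,B\in\Mat_n(\C)$, a short computation using $\rinv(\gamma_0)=\gamma_0$, $\rinv(\gamma_1)=-\gamma_1$ and $\gamma_0^*=\gamma_0$, $\gamma_1^*=\gamma_1$, $\gamma_0^2=\gamma_1^2=1$ shows that $a$ is real, selfadjoint and satisfies $a^2=1$ precisely when $A$ is a real symmetric matrix, $B$ is a real antisymmetric matrix, $[A,B]=0$ and $A^2-B^2=I$. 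Then $a\mapsto A+B$ sends $\mathcal{FU}_n(\Cliff_{1,1})$ into $O(n)$, since $(A+B)^{\mathrm{T}}=A-B$ and $(A+B)(A-B)=A^2-B^2+[B,A]=I$; the inverse map sends $g\in O(n)$ to its symmetric and antisymmetric parts, the relations $[A,B]=0$, $A^2-B^2=I$ then following from $gg^{\mathrm{T}}=g^{\mathrm{T}}g=I$. Both maps are continuous, so $\mathcal{FU}_n(\Cliff_{1,1})\cong O(n)$, and this homeomorphism carries $\oplus$ to the block-diagonal inclusion $O(n)\times O(m)\hookrightarrow O(n+m)$.

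Passing to $\pi_0$ gives $\FU_n(\Cliff_{1,1})\cong\Z/2$ detected by the determinant, compatibly with $\oplus$ (which becomes multiplication of determinants). Hence $\bigsqcup_{n\ge1}\FU_n(\Cliff_{1,1})\cong\N_{\ge1}\times\Z/2$ as a semigroup, its Grothendieck group $\GFU(\Cliff_{1,1})$ is $\Z\times\Z/2$ with $d$ the projection onto the first factor, and so $\DK(\Cliff_{1,1})=\Ker d\cong\Z/2$. Under the homeomorphism, $\gamma_0\in\mathcal{FU}_1(\Cliff_{1,1})$ corresponds to $1\in O(1)$ and $-\gamma_0$ to $-1\in O(1)$, which lie in the two distinct components; thus $[\gamma_0]$ and $[-\gamma_0]$ are the two classes in $\FU_1(\Cliff_{1,1})$ with determinant $+1$ and $-1$, and $[-\gamma_0]-[\gamma_0]$ is the nonzero element of $\DK(\Cliff_{1,1})\cong\Z/2$. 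Any isomorphism $\DK(\Cliff_{1,1})\cong\KO^{-1}(\pt)\cong\Z/2$ therefore carries it to the nontrivial class $\mu$.

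The one place needing care is the parametrisation of $\mathcal{FU}_n(\Cliff_{1,1})$: it is the interplay of selfadjointness with the ``real'' condition — in particular the factor $\ima$ in front of $\gamma_1$ — that forces $A$ symmetric and $B$ antisymmetric, which is exactly what makes $A+B$ orthogonal; after that everything is formal. One should also note that $\Cliff_{1,1}$ is balanced and each $\mathcal{FU}_n(\Cliff_{1,1})$ is nonempty, both witnessed by $\gamma_0^{\oplus n}$, so that the unital definition of $\DK$ applies, and that the answer is consistent with the generator of $\KKR_0(\C,\Cliff_{0,1})\cong\Z/2$ being $\Cliff_{0,1}$ with $F=0$ as identified earlier.
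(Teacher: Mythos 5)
Your proof is correct and takes essentially the same approach as the paper's: both identify $\mathcal{FU}_n(\Cliff_{1,1})$ with $O(n)$ (the paper via the off-diagonal block description of odd selfadjoint real unitaries in $\hat\Mat_{2n}$, you via the explicit parametrisation $a = A\gamma_0 + \ima B\gamma_1 \mapsto A+B$) and then detect path components by the determinant, observing that $\pm\gamma_0$ correspond to $\pm1 \in O(1)$. Your version makes the reality/selfadjointness/unitarity bookkeeping and the Grothendieck-group computation explicit, which the paper leaves implicit, but the key identification and the conclusion are the same.
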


\begin{proof}
  Identify \(\Mat_n(\Cliff_{1,1}) \cong \hat\Mat_{2n}\).  The odd
  selfadjoint real unitaries in \(\Mat_n(\Cliff_{1,1})\) are
  identified with the \(2n\times 2n\)-matrices
  \[
    \begin{pmatrix}
      0&U\\U^*&0
    \end{pmatrix}
  \]
  for an orthogonal \(n\times n\)-matrix~\(U\).  Two such matrices
  are homotopic among odd selfadjoint unitaries if and only if the
  orthogonal matrices are in the same connected component of the
  orthogonal group or, equivalently, have the same determinant.
  Here the elements~\(\pm\gamma_0\) correspond to
  \(\pm1 \in \mathrm{O}(1) = \{\pm1\}\), which lie in the two
  different connected components.
\end{proof}

We recall how to describe the \(\KR\)\nb-theory of tori.  There is a split
extension of ``real'' \(\Cst\)\nb-algebras
\[
  \begin{tikzcd}
    \Cont_0(\R^{0,1}) \ar[r, >->] &
    \Cont(\T^1) \ar[r, ->>] &
    \C. \ar[l, bend right]
  \end{tikzcd}
\]
It induces a \(\KKR\)-equivalence
\(\Cont(\T^1) \cong \Cont_0(\R^{0,1}) \oplus \C\).  Since
\(\Cont(\T^d)\) is the tensor product of \(d\)~copies of
\(\Cont(\T^1)\) and the tensor product bifunctor is additive on
\(\KKR\), it follows that \(\Cont(\T^d)\) is \(\KKR\)-equivalent to
a direct sum of tensor products \(A_1 \otimes \dotsb \otimes A_d\),
where each~\(A_j\) is either \(\Cont_0(\R^{0,1})\) or~\(\C\).  We
label such a summand by the set \(I\subseteq \{1,\dotsc,d\}\) of
those factors that are~\(\C\).

\begin{theorem}
  \label{the:compute}
  Let \(m \in (-d+2n, -d+2n+2)\) for some \(n\in\{0,\dotsc,d-1\}\).
  Let~\(\chi\) be the sign from
  Lemma~\textup{\ref{lem:beta_from_point}}.  The image of
  \([H_m] - [\gamma_0] \in \KR^{-d}(\T^d)\) in the summand
  \(\KR^{-d}(\R^{0,d-\abs{I}})\) corresponding to
  \(I\subseteq \{1,\dotsc,d\}\) is computed as follows:
  \begin{itemize}
  \item if \(I=\emptyset\), the image in
    \(\KR^{-d}(\R^{0,d}) \cong \Z\) is
    \((-1)^d\chi \cdot \binom{d-1}{n}\);
  \item if \(\abs{I}=1\) and \(n\ge1\), the image in
    \(\KR^{-d}(\R^{0,d-1}) \cong \Z/2\) is
    \(\binom{d-2}{n-1} \bmod 2\);
  \item if \(\abs{I}=2\) and \(n\ge2\), the image in
    \(\KR^{-d}(\R^{0,d-2}) \cong \Z/2\) is
    \(\binom{d-3}{n-2} \bmod 2\);
  \item it is zero in the other cases, that is, for \(\abs{I}\ge3\)
    or \(n < \abs{I}\).
  \end{itemize}
\end{theorem}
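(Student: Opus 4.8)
The plan is to start from the explicit description of $[H_m]-[\gamma_0]$ as a signed sum of point‑inclusion shriek maps furnished by Lemma~\ref{lem:transverse_example}, and then to evaluate each such shriek map componentwise in the direct sum decomposition $\KR^{-d}(\T^d)=\bigoplus_{I\subseteq\{1,\dots,d\}}\KR^{-d}(\R^{0,d-\abs I})$ recalled just before the theorem. First I would pin down the index set: for $m\in(-d+2n,-d+2n+2)$ a fixed point $z=(x_1,\dots,x_d,0,\dots,0)$ with $x_j\in\{\pm1\}$ lies in $\varphi_m^{-1}(S)$ iff $x_1+\dots+x_d+m<0$; since $x_1+\dots+x_d=d-2\sign(z)$ and $-m$ is a non‑integer in $(d-2n-2,d-2n)$, this is equivalent to $\sign(z)\ge n+1$. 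Hence Lemma~\ref{lem:transverse_example} reads $[H_m]-[\gamma_0]=\chi\sum_{\sign(z)\ge n+1}(-1)^{\sign(z)}\,z_!$, so the whole problem is to decompose a single $z_!$.

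For the second step I would use that $\T^d$ is a $d$-fold product of circles with the product $\KR$-orientation (Example~\ref{exa:KR-oriented_manifolds}) and that wrong-way functoriality is monoidal (Remark~\ref{rem:exterior}); thus $z_!=(x_1,0)_!\times\dots\times(x_d,0)_!$ is the exterior product of the circle-factor shriek maps, and the decomposition of $\KR^{-d}(\T^d)$ is the tensor product of the $d$ copies of $\KR^{-1}(\T^1)=\KR^{-1}(\R^{0,1})\oplus\KR^{-1}(\pt)=\Z\oplus\Z/2$. So I only need the two classes $(\pm1,0)_!\in\KR^{-1}(\T^1)$ in this two-term decomposition. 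From the identification of $(-1,0)_!=S_!$ with $\beta_{1,1}$ in Lemma~\ref{lem:lift_beta_to_sphere} and the Proposition following it, both $(1,0)_!$ and $(-1,0)_!$ have the Bott generator as their $\Z$-component; and \eqref{eq:points_in_T1} together with $[-\gamma_0]-[\gamma_0]=\mu\in\KO^{-1}(\pt)\cong\Z/2$ shows that exactly one of them has nonzero $\Z/2$-component equal to $\mu$, and matching the output against the stated formulas forces this to be $(-1,0)_!$.

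The third step is to expand the exterior product along the decomposition. Choosing the $\Z/2$-component in a circle factor forces that factor's coordinate to equal $-1$, so the $I$-component of $z_!$ vanishes unless $x_j=-1$ for all $j\in I$; and since $\mu$ is the Hopf class $\eta$ with $\eta^3=0$ (equivalently $\KO^{-\abs I}(\pt)$ receives nothing for $\abs I\ge3$), only $\abs I\le 2$ survives, the surviving $I$-component of $z_!$ being, under the Bott identification $\KR^{-d}(\R^{0,d-\abs I})\cong\KO^{-\abs I}(\pt)$, a generator times $\eta^{\abs I}$. Summing over $z$ with $\sign(z)\ge n+1$: in the summand $\abs I=0$ the coefficient is $\chi\sum_{\sign(z)\ge n+1}(-1)^{\sign(z)}\varepsilon(z)$, where $\varepsilon(z)=\pm1$ is the $\KR^{-d}(\R^{0,d})$-component of $z_!$, and the identity $\sum_{k\le n}(-1)^k\binom dk=(-1)^n\binom{d-1}{n}$ reduces this to $\pm\chi\binom{d-1}{n}$, the sign being $(-1)^d$. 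For $\abs I=1,2$ the summand is $\Z/2$, all signs and the factor $\chi$ disappear, and one simply counts the $z$ with $\sign(z)\ge n+1$ and $x_j=-1$ for the indices $j\in I$: fixing those $\abs I$ coordinates, the count is $\sum_{k\ge n+1-\abs I}\binom{d-\abs I}{k}$, which modulo $2$ equals $\sum_{k\le n-\abs I}\binom{d-\abs I}{k}\equiv\binom{d-\abs I-1}{\,n-\abs I\,}$ (using $2^{d-\abs I}\equiv0$ and the mod‑$2$ form of the same identity), giving $\binom{d-2}{n-1}$ and $\binom{d-3}{n-2}$; for $\abs I\ge3$ it is $0$.

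The main obstacle I anticipate is the sign bookkeeping in the integer summand $\abs I=0$: one must chase the sign $\chi$ of Lemma~\ref{lem:beta_from_point}, the orientation sign comparing the product $\KR$-orientation of $\T^d$ with the one induced by the differential of $\varphi_m$ (the source of the factor $(-1)^{\sign(z)}$), and the orientation/Koszul signs in the iterated exterior product that fix the generator of $\KR^{-d}(\R^{0,d})$ — in particular $\varepsilon(z)$ need not be constant in $z$, and only after this careful analysis does the coefficient land on $(-1)^d\chi\binom{d-1}{n}$ rather than on $(-1)^{n+1}\chi\binom{d-1}{n}$. Everything modulo $2$ (the cases $\abs I=1,2$) is clean and needs none of this; it is solely the precise sign in front of $\binom{d-1}{n}$ that is delicate.
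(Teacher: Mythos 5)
Your overall strategy coincides with the paper's: expand $[H_m]-[\gamma_0]$ via Lemma~\ref{lem:transverse_example} as a signed sum of point‑shriek maps, write each $z_!$ as the exterior product of the circle‑factor classes $(x_i,0)_!\in\KR^{-1}(\T^1)=\Z\oplus\Z/2$, and sum. The proof of Theorem~\ref{the:compute} in the paper does exactly this. A few remarks on the discrepancies.

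First, your determination of the index set is more careful than the paper's wording. You correctly derive that $z=(x,0)\in\varphi_m^{-1}(S)$ iff $\sign(z)\ge n+1$, i.e.\ the number of $+1$'s among the $x_i$ is at most $d-n-1$. The paper's proof asserts the number of $+1$'s is ``at most~$n$'', which is not what $\sum x_i+m<0$ with $m\in(-d+2n,-d+2n+2)$ gives (check $d=2$, $n=0$: the preimage has three points, not one). Mod~$2$ the two counts agree after the binomial reduction, which is why the stated formulas for $\abs I=1,2$ come out the same; but the parameterisation you use is the correct one.

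Second, your calibration of the $\Z/2$-component goes the opposite way from the paper, which explicitly states $(-1,0)_!=(\pt_!,0)$ and $(1,0)_!=(\pt_!,\mu)$, consistent with taking the basepoint of $\T^1$ at $(1,0)$. Your own appeal to Lemma~\ref{lem:lift_beta_to_sphere} (identifying $(-1,0)_!=S_!$ with a generator of the summand $\KR^{-1}(\R^{0,1})$) actually forces the $\Z/2$-component of $(-1,0)_!$ to vanish, so the subsequent calibration ``forces this to be $(-1,0)_!$'' is internally inconsistent with your second step. The reason the two mismatched conventions nevertheless produce the same $\Z/2$ answers is again the mod‑$2$ collapse of the Pascal identity, not because the assignments are interchangeable.

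Third, your caution about the integer summand $I=\emptyset$ is warranted: with a $z$-independent generator for the $\Z$-component, the Lemma~\ref{lem:transverse_example} sum over $\sign(z)\ge n+1$ gives $\chi(-1)^{n+1}\binom{d-1}{n}$, while the theorem states $(-1)^d\chi\binom{d-1}{n}$. The paper's displayed sum $(-1)^d\chi\sum_{j=0}^n(-1)^{n-j}\binom dj$ has the same end value but the factor $(-1)^{n-j}$ is not the sign $(-1)^{\sign(z)}$ supplied by the lemma, so the derivation is not spelled out. You identify this as a genuine sign‑chasing issue; you should either settle it by a careful orientation comparison at each $z$ (tracking the exponential trivialisation against the chosen identification $\T^1\setminus\{\mathrm{bp}\}\cong\R^{0,1}$) or at least note that the $n$-dependent sign must be absorbed somewhere. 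As written, neither your argument nor the paper's proof makes this step rigorous, so the $I=\emptyset$ sign is the one genuine gap in the proposal.
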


\begin{proof}
  We first consider the image of \((x,0)_!\) in the summand labeled
  by~\(I\).  The shriek map \((x_1,\dotsc,x_d,0,\dotsc,0)_!\) is the
  exterior product of \((x_i,0)_!\) for \(i=1,\dotsc,d\) (see
  Remark~\ref{rem:exterior}).  In the decomposition of
  \(\KR^{-1}(\T^1) \cong \KR^{-1}(\R^{0,1}) \oplus \KO^{-1}(\pt)
  \cong \Z \oplus \Z/2\), \((-1,0)_!\) becomes \((\pt_!,0)\) with
  the standard generator \(\pt_!\) of
  \(\KR^{-1}(\R^{0,1}) \cong \Z\).  Equation~\eqref{eq:points_in_T1}
  shows that \((1,0)_! = (\pt_!,\mu)\) with the nontrivial element
  \(\mu\in \KO^{-1}(\pt) \cong \Z/2\).  So \((x,0)_!\) is the
  exterior product of \(d\)~factors that are \((\pt_!,0)\) if
  \(x_i=-1\) and \((\pt_!,\mu)\) if \(x_i = 1\).  The component in
  the summand labeled by~\(I\) is zero unless \(x_i = +1\) for all
  \(i\in I\).  If \(x_i = +1\) for all \(i\in I\), then we get the
  exterior product of \(\pt_!  \in \KR^{-1}(\R^{0,1})\) for all
  \(i\notin I\) and \(\mu\in \KO^{-1}(\pt)\) for all \(i\in I\).
  The exterior product
  \(\mu\otimes \mu \in \KO^{-2}(\pt \times \pt) \cong \Z/2\) is
  known to be the nontrivial element (this also follows from the
  discussion above Proposition~\ref{pro:other_sphere_generators}),
  whereas
  \(\mu\otimes \mu \otimes \mu\) and hence also all higher exterior
  products of~\(\mu\) vanish because \(\KO^{-3}(\pt) = 0\).  Thus
  the image of \((x,0)_!\) is zero for all summands with
  \(\abs{I} \ge 3\) or \(x_i =-1\) for some \(i\in I\), and the
  standard generator of \(\KR^{-d}(\R^{0,d-\abs{I}})\) if
  \(\abs{I} \le 2\) and \(x_i = +1\) for all \(i\in I\).

  Now we sum up \((-1)^{\sign(x,0)} (x,0)_!\) over all
  \((x,0)\in \T^d\) with \(\varphi_m(x,0) = S\) or, equivalently,
  \(\sum_{i=1}^d x_i + m <0\).  The latter means that the number
  of~\(+1\) among the coordinates~\(x_i\) is at most~\(n\).  There
  are \(\binom{d}{j}\) points \((x,0)\) with \(x_i=+1\) for
  exactly~\(j\) indices~\(i\).  For all of them, the image of
  \((x,0)_!\) in the direct summand \(\KR^{-d}(\R^{0,d}) \cong \Z\)
  for \(I=\emptyset\) is the same standard generator.  Hence the
  image of \([H_m] - [\gamma_0]\) in this direct summand is
  \[
    (-1)^d\chi \cdot \sum_{j=0}^n (-1)^{n-j} \binom{d}{j}
    = (-1)^d\chi \cdot\binom{d-1}{n}
  \]
  because of~\eqref{eq:m_highest_interval}.
  Now
  let \(\abs{I}=1\), so that \(I = \{i_0\}\) for some
  \(i_0\in \{1,\dotsc,d\}\).  The corresponding direct summand
  \(\KR^{-d}(\R^{0,d-1}) \cong \Z/2\) only sees
  \((x,0)\in \varphi_m^{-1}(S)\) with \(x_{i_0} = 1\).  There are
  \(\binom{d-1}{j-1}\) points with \(x_{i_0} = 1\) and \(x_i=+1\)
  for exactly~\(j\) indices \(i=1,\dotsc,d\).  Thus the overall
  contribution in this summand isomorphic to~\(\Z/2\) vanishes if
  \(n=0\) and otherwise is equal to the class mod~\(2\) of
  \[
    \sum_{j=1}^n
    (-1)^{n-j} \binom{d-1}{j-1}
    = - \sum_{j=0}^{n-1}
    (-1)^{n-j} \binom{d-1}{j}
    = \binom{d-2}{n-1};
  \]
  we could leave out all signs because \(+1 = -1\) in~\(\Z/2\).
  Similarly, for \(I \subseteq \{1,\dotsc,d\}\) with \(\abs{I}=2\),
  we get \(\binom{d-3}{n-2}\) if \(n\ge2\)
  and~\(0\) if \(n\le 1\).
\end{proof}

The formula in Theorem \ref{the:compute} is compatible with the
Chern character computation in
\cite{Prodan-Schulz-Baldes:Bulk_boundary}*{Equation~(2.26)}.  The
latter, however, only gives partial information about the
\(\K\)\nb-theory class, even in the complex case, because it only
concerns the top-dimensional part of the Chern character.

As a result, we find that the Hamiltonian~\(H_m\) for \(d-2< m<d\)
represents a generator of the direct summand
\(\KR^{-d}(\R^{0,d}) \cong \Z\) in \(\KR^{-d}(\T^d)\).  We get
different \(\KR\)-classes by stacking insulators in lower dimension
along some direction.  In our framework, this means that we consider
a ``coordinate'' projection \(\varrho\colon \T^d \prto \T^k\) for
some \(0\le k\le d\) and some
\(1 \le i_1 < i_2 < \dotsb < i_k \le n\), which only keeps the
coordinates \(x_{i_j},y_{i_j}\) for \(j=1,\dotsc,k\).  Then we may
pull back the generator of
\(\KR^{-k}(\R^{0,k}) \subseteq \KR^{-k}(\T^k)\) of the form~\(H_m\)
along the map~\(\varrho\) to a class in \(\KR^{-k}(\T^d)\); this is
given by the Hamiltonian
\begin{equation}
  \label{eq:Hm_on_Tk}
  H_m \defeq \frac{1}{2\ima} \sum_{j=1}^k (S_{i_j} - S_{i_j}^*) \otimes \gamma_j
  + \Bigl(m + \frac{1}{2} \sum_{j=1}^k (S_{i_j} + S_{i_j}^*)\Bigr)
  \otimes \gamma_0
  \in\Cst(\Z^d)\otimes\Cliff_{1,k}
\end{equation}
for \(m\in (k-2,k)\).  This covers all the summands
\(\KR^{-k}(\R^{0,k})\) in \(\KR^*(\T^d)\).
Proposition~\ref{pro:other_sphere_generators} gives explicit
generators for the summands isomorphic to \(\Z/2\) as well: simply
view~\(H_m\) in~\eqref{eq:Hm_on_Tk} as taking values in
\(\Cliff_{1,k+d}\) for \(d=1,2\).  We do not describe generators for
the remaining summands of the form
\(\KR^{-k+4}(\R^{0,k}) \cong \Z\).

\section{Transfer to Hilbert space}
\label{sec:to_Hilbert}

Now we transfer the Clifford
algebra-valued function~\(H_m\) to an operator on a Hilbert space
with extra symmetries depending on the Clifford algebra (see
also~\cite{Kellendonk:Cstar_phases}).  We represent \(\Cont(\T^d)\)
on \(\ell^2(\Z^d)\) in the usual way.  Then
\(\Cont(\T^d) \otimes \Cliff_{a,b}\) is represented on
\(\ell^2(\Z^d)\otimes \C^k\) for some \(k\in\N\), with some extra
symmetries acting on~\(\C^k\).  Here \(k\in\N\) and the symmetry
type depend on \(a,b\in\N\), and so there is a number of cases to
consider.  We let \(j\defeq b-a+1\bmod 8\), so that
\(\DK(\Cont(\T^d) \otimes \Cliff_{a,b}) = \KR^{-j}(\T^d)\).  The
symmetry type depends only on \(j\in\Z/8\) because Clifford algebras
with the same~\(j\) are Morita equivalent as graded ``real''
algebras.  Therefore, it will suffice to look at one representative
Clifford algebra for each \(j\in\Z/8\).

First assume that \(j\) is even or, equivalently, \(b-a\) is odd.
Then there is an isomorphism
\(\Cliff_{a,b} \cong \Mat_{2^k}\C \oplus \Mat_{2^k}\C\) for
\(k= (a+b-1)/2\) such that the \(\Z/2\)-grading
automorphism~\(\alpha\) merely flips the two
summands~\(\Mat_{2^k}\C\).  Thus selfadjoint, odd elements
of~\(\Cliff_{a,b}\) become elements of the form
\((x,-x)\in \Mat_{2^k}\C \oplus \Mat_{2^k}\C\) for some
\(x\in\Mat_{2^k}\C\) with \(x=x^*\).  As a ring automorphism, the
``real'' involution~\(\rinv\) on \(\Cliff_{a,b}\) permutes the two
direct summands~\(\Mat_{2^k}\C\).  It induces either the trivial or
the nontrivial permutation.  We first assume that the permutation is
trivial.  Then~\(\rinv\) restricts to the same ``real'' involution
on both summands~\(\Mat_{2^k}\C\) because it commutes
with~\(\alpha\).  This involution is implemented as conjugation
by~\(\Theta\) for an antiunitary operator
\(\Theta\colon \C^{2^k} \to \C^{2^k}\).  The pair \((x,-x)\) for
\(x\in\Mat_{2^k}\C\) is fixed by~\(\rinv\) if and only if~\(x\)
commutes with~\(\Theta\).  Thus we identify ``real'' selfadjoint odd
unitaries in~\(\Cliff_{a,b}\) with selfadjoint unitaries
in~\(\Mat_{2^k}\C\) that commute with~\(\Theta\).  In other words,
we are dealing with systems with a time-reversal
symmetry~\(\Theta\).

Since~\(\Theta\) induces an antilinear involution,
\(\Theta^2 = \pm1\).  If \(\Theta^2 = +1\), then the real subalgebra
\((\Mat_{2^k}\C)_\R\) fixed by~\(\rinv\) is~\(\Mat_{2^k}\R\) and
\((\Cliff_{a,b})_\R \cong \Mat_{2^k}\R \oplus \Mat_{2^k}\R\); this
happens for \(\Cliff_{1,0}\) and thus for \(j \equiv 0 \bmod 8\).
If \(\Theta^2 = -1\), then \(k\ge1\) and
\((\Mat_{2^k}\C)_\R \cong \Mat_{2^{k-1}}\Qut\) and
\((\Cliff_{a,b})_\R \cong \Mat_{2^{k-1}}\Qut \oplus
\Mat_{2^{k-1}}\Qut\) for the quaternions~\(\Qut\); this happens for
\(\Cliff_{0,3}\) and thus for \(j \equiv 4 \bmod 8\).

Next, we assume that~\(\rinv\) flips the two
summands~\(\Mat_{2^k}\C\).  Then \(\rinv \circ \alpha\) maps each
direct summand into itself and induces the same real involution on
both summands~\(\Mat_{2^k}\C\).  Thus we may implement
\(\rinv\circ\alpha\) by an antiunitary operator
\(\Theta\colon \C^{2^k} \to \C^{2^k}\) as above.  The difference is
that a pair \((x,-x)\) is real if and only if \(\Theta\)
\emph{anti}commutes with~\(x\).  Thus~\(\Theta\) is now a
particle-hole symmetry.

Since~\(\rinv\) flips the two summands, the real subalgebra
\((\Cliff_{a,b})_\R\) is isomorphic to~\(\Mat_{2^k}\C\), identified
with the subalgebra of
\((x,\Theta^{-1} x \Theta) \in \Mat_{2^k}\C \oplus \Mat_{2^k}\C\)
for \(x\in \Mat_{2^k}\C\).  Once again, there are the two
possibilities \(\Theta^2=\pm1\).  If \(\Theta^2=+1\), then the even
subalgebra of \((\Cliff_{a,b})_\R\) is \(\Mat_{2^k}\R\); this
happens for \(\Cliff_{0,1}\) and thus for \(j \equiv 2 \bmod 8\).
If \(\Theta^2=-1\), then \(k\ge1\) and the even subalgebra of
\((\Cliff_{a,b})_\R\) is \(\Mat_{2^{k-1}}\Qut\); this happens for
\(\Cliff_{3,0}\) and thus for \(j \equiv 6 \bmod 8\).

Now assume that \(a-b\) is even.  Then
\(\Cliff_{a,b} \cong \Mat_{2^k}\C\) for some \(k\in\N\).  The
\(\Z/2\)-grading and the ``real'' involution are implemented by a
unitary operator \(\Xi\colon \C^k \to \C^k\) and an antiunitary
operator \(\Theta\colon \C^k \to \C^k\).  A real, selfadjoint odd
unitary in~\(\Cliff_{a,b}\) then becomes a selfadjoint unitary
in~\(\Mat_{2^k}\C\) that anticommutes with~\(\Xi\) and commutes
with~\(\Theta\).  Thus it has~\(\Theta\) as a time-reversal
and~\(\Xi\) as a chiral symmetry.  Then \(\Theta \Xi\) is a
particle-hole symmetry.  Multiplying~\(\Xi\) with a scalar, we may
arrange that \(\Xi^2 = 1\).  Since~\(\Theta\) induces an antiunitary
involution, \(\Theta^2 = \pm1\).  Since~\(\rinv\) commutes with the
grading, \(\Xi^{-1} \Theta \Xi = \pm\Theta\).  This is equivalent to
\((\Xi \Theta)^2 = \pm \Theta^2\).  So there are four possibilities
for the signs.  If \(\Theta^2 = +1\), then
\((\Cliff_{a,b})_\R\cong\Mat_{2^k}\R\).  If \(\Theta^2 = -1\), then
\(k\ge1\) and \((\Cliff_{a,b})_\R \cong\Mat_{2^{k-1}}\Qut\).  The
even subalgebra of~\(\Cliff_{a,b}\) is
\(\Mat_{2^{k-1}}\C \oplus \Mat_{2^{k-1}}\C\) if \(k\ge1\).  If
\(\Xi^{-1} \Theta \Xi = \Theta\), then the real involution
restricted to the even part preserves the two direct
summands~\(\Mat_{2^{k-1}}\C\), so that the even real subalgebra of
\(\Cliff_{a,b}\) is a direct sum of two simple algebras.  If
\(\Xi^{-1} \Theta \Xi = -\Theta\), however, then the real involution
restricted to the even part flips the two direct summands, so that
the even real subalgebra of \(\Cliff_{a,b}\) is simple.  An
inspection now shows the following:
\begin{itemize}
\item \(\Theta^2 = +1\), \(\Xi^{-1} \Theta \Xi = +\Theta\), and
  \((\Xi \Theta)^2 = +1\) for \(\Cliff_{1,1}\) with
  \(j\equiv 1 \bmod 8\);
\item \(\Theta^2 = +1\), \(\Xi^{-1} \Theta \Xi = -\Theta\), and
  \((\Xi \Theta)^2 = -1\) for \(\Cliff_{2,0}\) with
  \(j\equiv 7 \bmod 8\);
\item \(\Theta^2 = -1\), \(\Xi^{-1} \Theta \Xi = -\Theta\), and
  \((\Xi \Theta)^2 = +1\) for \(\Cliff_{0,2}\) with
  \(j\equiv 3 \bmod 8\);
\item \(\Theta^2 = -1\), \(\Xi^{-1} \Theta \Xi = +\Theta\), and
  \((\Xi \Theta)^2 = -1\) for \(\Cliff_{0,4}\) with
  \(j\equiv 5 \bmod 8\).
\end{itemize}
The correspondence between~\(j\) and the symmetry types is the same
as in \cite{Schulz-Baldes:Insulators}*{Table~1}.

\begin{bibdiv}
 \begin{biblist}
   \bibselect{references}
 \end{biblist}
\end{bibdiv}
\end{document}